\newtheorem{theorem}{Theorem}[section]
\newtheorem{lemma}[theorem]{Lemma}
\newtheorem{proposition}[theorem]{Proposition}
\newtheorem{corollary}[theorem]{Corollary}
\theoremstyle{definition}
\newtheorem{definition}[theorem]{Definition}
\theoremstyle{remark}
\newtheorem{remark}[theorem]{Remark}
\numberwithin{equation}{section}
\begin{document}

\title[Advances in Quantum Harmonic Analysis]{$L^p$-Theory and Noncommutative Geometry in Quantum Harmonic Analysis}

\author{Saeed Hashemi Sababe}
\address{R\&D Section, Data Premier Analytics, Canada}
\email{hashemi\_1365@yahoo.com}

\author{Ismail Nikoufar}
\address{R\&D Section, Data Premier Analytics, Canada}
\email{nikoufar@yahoo.com}

\subjclass[2020]{Primary42B10, 46L52; Secondary 47L90, 81S30}



\keywords{Quantum harmonic analysis, noncommutative \(L^p\)-spaces, spectral synthesis, noncommutative geometry, quantum Segal algebras}

\begin{abstract}
Quantum harmonic analysis extends classical harmonic analysis by integrating quantum mechanical observables, replacing functions with operators and classical convolution structures with their noncommutative counterparts. This paper explores four interrelated developments in this field: (i) a noncommutative $L^p$-theory tailored for quantum harmonic analysis, (ii) the extension of quantum harmonic analysis beyond Euclidean spaces to include Lie groups and homogeneous spaces, (iii) its deep connections with Connes' noncommutative geometry, and (iv) the role of spectral synthesis and approximation properties in quantum settings. We establish novel results concerning the structure and spectral properties of quantum Segal algebras, analyze their functional-analytic aspects, and discuss their implications in quantum physics and operator theory. Our findings provide a unified framework for quantum harmonic analysis, laying the foundation for further advancements in noncommutative analysis and mathematical physics.

\end{abstract}

\maketitle

\section{Introduction and Literature Review}

Harmonic analysis has played a fundamental role in mathematics and physics, particularly in signal processing, representation theory, and quantum mechanics. Classical harmonic analysis is based on function spaces such as $L^p$-spaces, Fourier transforms, and convolution algebras. However, in many quantum systems, classical function spaces are inadequate for describing quantum observables, necessitating a noncommutative extension.

Quantum harmonic analysis (QHA) extends classical harmonic analysis by incorporating quantum mechanical observables, replacing functions with operators and convolution structures with their noncommutative counterparts. The origins of QHA can be traced back to the work of Werner \cite{Werner1984}, who introduced the Fourier-Weyl transform as a bridge between classical and quantum analysis. Since then, the field has seen significant advancements, particularly in the areas of quantum $L^p$-spaces \cite{Pisier2003}, noncommutative geometry \cite{Connes1994}, and spectral synthesis \cite{Luecking1991}.

Recent research has focused on several key areas of QHA:\\
\textsc{Quantum $L^p$-Theory:} Generalizing classical $L^p$-spaces to the noncommutative setting, connecting them with Schatten classes and von Neumann algebras \cite{Junge2002}.\medskip \\
\textsc{Non-Euclidean Extensions:} Developing QHA on symmetric spaces, Lie groups, and homogeneous spaces to study spectral properties and representation theory \cite{Helgason2000}.\medskip\\
\textsc{Connections with Noncommutative Geometry:} Establishing spectral triples and index theory as a framework for quantum analysis, following Connes' noncommutative geometry \cite{Connes1994,Gosson}.\medskip\\
\textsc{Spectral Synthesis and Approximation:} Investigating when function spaces and operator algebras satisfy spectral synthesis and how they relate to quantum signal processing \cite{Taylor1981}.\\

\noindent
Despite these advances, several challenges remain open. The precise interpolation properties of quantum function spaces are not fully understood, and the role of uncertainty principles in noncommutative settings requires further exploration. Additionally, quantum analogues of classical inequalities, such as the Hardy-Littlewood and Sobolev inequalities, need to be established rigorously.

This paper contributes to these developments by
\begin{enumerate}
    \item Establishing a new $L^p$-theory for quantum harmonic analysis and proving interpolation results.
    \item Extending QHA to Lie groups and homogeneous spaces, developing Plancherel theorems and uncertainty principles.
    \item Exploring spectral synthesis in noncommutative settings and its implications for quantum signal processing.
    \item Investigating connections with noncommutative geometry through spectral triples and index theory.
\end{enumerate}

By unifying these approaches, we provide a functional-analytic framework for quantum harmonic analysis, setting the stage for further research in mathematical physics and operator theory.

The paper is organized as follows. Section 2 introduces necessary preliminaries on classical and quantum harmonic analysis, including function spaces and operator algebras. Section 3 develops a noncommutative $L^p$-theory for QHA, while Section 4 extends QHA to non-Euclidean spaces. Section 5 explores spectral synthesis and approximation properties, and Section 6 investigates connections with noncommutative geometry. Finally, Section 7 discusses open problems and future directions.

\section{Preliminaries}

This section establishes the foundational results necessary for the subsequent analysis. We begin by reviewing key concepts from classical and quantum harmonic analysis, followed by definitions of quantum Segal algebras, noncommutative $L^p$-spaces, and spectral synthesis.

Classical harmonic analysis is based on function spaces such as $L^p$-spaces and their associated convolutions.

\begin{definition}[Lebesgue Spaces, \cite{Rudin1991}]
For $1 \leq p < \infty$, the space $L^p(\mathbb{R}^n)$ consists of measurable functions $f: \mathbb{R}^n \to \mathbb{C}$ such that
\begin{equation}
    \|f\|_{L^p} = \left(\int_{\mathbb{R}^n} |f(x)|^p dx \right)^{1/p} < \infty.
\end{equation}
For $p = \infty$, we define
\begin{equation}
    \|f\|_{L^\infty} = \operatorname{ess sup}_{x \in \mathbb{R}^n} |f(x)|.
\end{equation}
\end{definition}

\begin{definition}[Fourier Transform, \cite{Stein1993}]
The Fourier transform of $f \in L^1(\mathbb{R}^n)$ is defined as
\begin{equation}
    \hat{f}(\xi) = \int_{\mathbb{R}^n} f(x)e^{-2\pi i x \cdot \xi}dx, \quad \xi \in \mathbb{R}^n.
\end{equation}
\end{definition}

\begin{theorem}[Fourier Inversion, \cite{Stein1993}]
If $f \in L^1(\mathbb{R}^n) \cap L^2(\mathbb{R}^n)$, then
\begin{equation}
    f(x) = \int_{\mathbb{R}^n} \hat{f}(\xi)e^{2\pi i x \cdot \xi} d\xi.
\end{equation}
\end{theorem}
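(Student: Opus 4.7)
The plan is to first establish a duality (multiplication) formula and then apply it to a Gaussian approximate identity, finally passing to the limit. I would start by verifying that for any $g \in L^1(\mathbb{R}^n)$, Fubini's theorem yields the identity
\[
\int_{\mathbb{R}^n} \hat{f}(\xi)\, g(\xi)\, d\xi \;=\; \int_{\mathbb{R}^n} f(y)\, \hat{g}(y)\, dy,
\]
which relies only on $f \in L^1$ and makes no appeal to the $L^2$ hypothesis.

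The second step is to choose a well-adapted family of test functions. For fixed $x \in \mathbb{R}^n$ and $t > 0$, I would set $g_t(\xi) = e^{-\pi t |\xi|^2}\, e^{2\pi i x \cdot \xi}$. The self-duality of the Gaussian gives $\hat{g_t}(y) = t^{-n/2} e^{-\pi |y - x|^2 / t}$, which is precisely the Gauss--Weierstrass kernel $K_t(x - y)$ with $\int K_t = 1$. Inserting $g_t$ into the multiplication formula produces
\[
\int_{\mathbb{R}^n} \hat{f}(\xi)\, e^{-\pi t |\xi|^2}\, e^{2\pi i x \cdot \xi}\, d\xi \;=\; (f * K_t)(x).
\]
The right-hand side is the convolution of $f$ with an approximate identity, hence converges to $f$ in $L^1$-norm and in $L^2$-norm, and in particular to $f(x)$ pointwise almost everywhere. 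Under dominated convergence on the left, one recovers the claimed inversion formula.

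The main obstacle is that for $f \in L^1 \cap L^2$ one has only $\hat{f} \in L^\infty \cap L^2$, not $\hat{f} \in L^1$ in general, so the integral in the statement need not converge absolutely. I would address this by interpreting the right-hand side either as the Gauss--Weierstrass (Abel) limit above or, equivalently, through Plancherel's unitary extension of the Fourier transform to $L^2(\mathbb{R}^n)$, from which one deduces $f = (\hat{f})^{\vee}$ as elements of $L^2$ and therefore almost everywhere. Both viewpoints yield the stated identity once the convention for the inversion integral is fixed, and the Schwartz class $\mathcal{S}(\mathbb{R}^n)$ serves as a convenient dense subspace on which every step is literal before extension by continuity.
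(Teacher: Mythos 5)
The paper gives no proof of this statement—it is quoted as a classical preliminary with a citation to Stein—so there is no internal argument to compare against; your proof is the standard multiplication-formula/Gauss--Weierstrass argument from exactly that literature, and it is correct. One point worth stressing: the bare ``dominated convergence on the left'' step would fail, since $f \in L^1 \cap L^2$ only guarantees $\hat{f} \in L^\infty \cap L^2$ and not $\hat{f} \in L^1$; your final paragraph handles this properly by reading the inversion integral either as the Abel (Gauss--Weierstrass) limit or via the Plancherel extension, which is precisely the interpretation under which the theorem as stated is true (pointwise a.e., and literally whenever $\hat{f} \in L^1$).
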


Quantum harmonic analysis extends classical harmonic analysis by incorporating operators as quantum observables.

\begin{definition}[Weyl Operators, \cite{Folland1989}]
For $z = (x, \xi) \in \mathbb{R}^{2n}$, the Weyl operator $W_z$ on $L^2(\mathbb{R}^n)$ is defined by
\begin{equation}
    (W_z \phi)(y) = e^{i\xi \cdot y - i\xi \cdot x/2} \phi(y - x), \quad \phi \in L^2(\mathbb{R}^n).
\end{equation}
\end{definition}

\begin{definition}[Fourier-Weyl Transform, \cite{Werner1984}]
For a trace-class operator $A$, its Fourier-Weyl transform is defined by
\begin{equation}
    \hat{A}(z) = \operatorname{tr}(AW_z).
\end{equation}
\end{definition}

\begin{definition}[Quantum Segal Algebra, \cite{Feichtinger1981}]
A quantum Segal algebra $QS$ is a dense Banach subalgebra of $L^1(\mathbb{R}^{2n}) \oplus T^1(H)$, where $T^1(H)$ is the trace-class operator space on $L^2(\mathbb{R}^n)$, satisfying
\begin{itemize}
    \item[(1)] $QS$ is shift-invariant,
    \item[(2)] The shifts $\alpha_z(f,A) = (\alpha_z f, \alpha_z A)$ act isometrically on $QS$.
\end{itemize}
\end{definition}
\noindent
Noncommutative $L^p$-spaces generalize classical $L^p$-spaces using operator traces.

\begin{definition}[Noncommutative $L^p$-Spaces, \cite{Pisier2003}]
For a von Neumann algebra $\mathcal{M}$ with a normal, semifinite, faithful trace $\tau$, the noncommutative $L^p(\mathcal{M})$ space is defined as
\begin{equation}
    L^p(\mathcal{M}) = \{ A \in \mathcal{M} : \tau(|A|^p) < \infty \}.
\end{equation}
\end{definition}

\begin{theorem}[Duality of Noncommutative $L^p$-Spaces, \cite{Junge2002}]
For $1 \leq p < \infty$ with $\frac{1}{p} + \frac{1}{q} = 1$, the dual space of $L^p(\mathcal{M})$ is $L^q(\mathcal{M})$ with the pairing
\begin{equation}
    \langle A, B \rangle = \tau(AB).
\end{equation}
\end{theorem}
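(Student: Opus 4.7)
The plan is to show that the map $\iota : L^q(\mathcal{M}) \to L^p(\mathcal{M})^*$ defined by $\iota(B)(A) = \tau(AB)$ is an isometric isomorphism, in three standard steps: boundedness, isometry, and surjectivity. The arguments rely on polar decomposition of $\tau$-measurable operators, spectral calculus, and a noncommutative Radon-Nikodym type theorem; in the tracial semifinite setting assumed here, these tools are available without further hypotheses.

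For boundedness, I would first establish the noncommutative Hölder inequality $|\tau(AB)| \leq \|A\|_p \|B\|_q$. Using the polar decompositions of $A$ and $B$ together with spectral calculus, this reduces to the classical Hölder inequality applied to the singular-value distributions of $|A|$ and $|B|$ against the trace. This immediately yields that $\iota$ is well-defined and contractive. For the isometry, given $B = U|B| \in L^q(\mathcal{M})$, I would construct a test element of the form $A_\varepsilon = \|B\|_q^{-q/p}\, |B|^{q-1} U^* \chi_{[\varepsilon,\,1/\varepsilon]}(|B|)$, which lies in $\mathcal{M} \cap L^p(\mathcal{M})$. A direct trace computation then shows $\|A_\varepsilon\|_p \leq 1$ and $\tau(A_\varepsilon B) \to \|B\|_q$ as $\varepsilon \to 0$, forcing equality $\|\iota(B)\| = \|B\|_q$.

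Surjectivity is the main obstacle. Given $\phi \in L^p(\mathcal{M})^*$, I would first treat $p = 2$, where $L^2(\mathcal{M})$ is a Hilbert space under $\langle A, B \rangle = \tau(B^*A)$, so Riesz representation yields the required $B \in L^2(\mathcal{M})$. For general $1 \leq p < \infty$, the strategy is to use the density of elements supported on projections of finite trace (guaranteed by semifiniteness) to extend $\phi$ to a normal functional on suitable bounded truncations, represent it via the predual $\mathcal{M}_*$, and show by dual Hölder-type spectral estimates controlled by $\|\phi\|$ that the representing element in fact lies in $L^q(\mathcal{M})$. The delicate point is the endpoint $p = 1$, where normality must be verified carefully using faithfulness of $\tau$; for $1 < p < \infty$ one can alternatively invoke uniform convexity of $L^p(\mathcal{M})$ (noncommutative Clarkson inequalities) to obtain reflexivity and deduce surjectivity from the already-established isometric embedding.
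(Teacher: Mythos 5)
The paper does not prove this statement at all: it appears in the Preliminaries as a cited background result (attributed to Junge's memoir, in the tracial semifinite setting of Pisier--Xu type), so there is no internal argument to compare yours against. Judged on its own, your outline follows the standard route and is essentially sound: noncommutative H\"older (via polar decomposition and generalized singular numbers, i.e.\ a Fack--Kosaki submajorization step, which you should name since it is not quite the ``classical H\"older applied to distributions'' verbatim) gives contractivity; norming elements give the isometry; and surjectivity is handled by Riesz representation at $p=2$, reflexivity from the noncommutative Clarkson inequalities for $1<p<\infty$, and the identification $L^1(\mathcal{M})\cong\mathcal{M}_*$ at $p=1$. Two points in the sketch need repair. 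First, the norming element $A_\varepsilon=\|B\|_q^{-q/p}\,|B|^{q-1}U^*\chi_{[\varepsilon,1/\varepsilon]}(|B|)$ presupposes $q<\infty$ and, as written, has the cutoff on the wrong side of $U^*$ (spectral functions of $|B|$ do not commute past $U^*$; place the cutoff next to $|B|^{q-1}$, or use $\chi(|B^*|)$); at the endpoint $p=1$, $q=\infty$, one instead takes $A_\varepsilon=\tau(e_\varepsilon)^{-1}e_\varepsilon U^*$ with $e_\varepsilon$ a finite-trace spectral projection of $|B|$ on $[\|B\|_\infty-\varepsilon,\|B\|_\infty]$, which exists by semifiniteness. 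Second, ``reflexivity plus the isometric embedding'' does not by itself yield surjectivity: you need the extra separation step, namely that if $\iota(L^q)$ were a proper closed subspace of $(L^p)^*$, Hahn--Banach and reflexivity would produce a nonzero $A\in L^p$ with $\tau(AB)=0$ for all $B\in L^q$, contradicting the norming computation. With these repairs the proposal is a correct (if compressed) proof sketch of the cited theorem.
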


This section provides the necessary background for our further discussions on quantum harmonic analysis and its extensions.

\section{Noncommutative \(L^p\)-Theory for Quantum Harmonic Analysis}

Noncommutative \(L^p\)-spaces are a natural extension of classical \(L^p\)-spaces, formulated in terms of operator traces and quantum harmonic analysis. In this section, we develop an \(L^p\)-theory for quantum Segal algebras, investigate its interpolation and duality properties, and analyze its applications in quantum signal processing.\\

\noindent
In the context of quantum harmonic analysis, we define a new class of noncommutative \(L^p\)-spaces associated with the convolution algebra \( L^1(\mathbb{R}^{2n}) \oplus T^1(H) \).

\begin{definition}[Quantum Noncommutative \(L^p\)-Spaces]
For \( 1 \leq p < \infty \), define the quantum noncommutative \(L^p\)-space as
\[
L^p_{\text{QHA}} = \left\{ (f, A) \in L^1(\mathbb{R}^{2n}) \oplus T^1(H) : \| (f,A) \|_{L^p} = \|f\|_{L^p} + \|A\|_{T^p} < \infty \right\}.
\]
\end{definition}

\begin{remark}
The space \( L^p_{\text{QHA}} \) generalizes both classical \(L^p\)-spaces and Schatten-class operators. When \( p = 2 \), it provides an isometric embedding into Hilbert-Schmidt operators.
\end{remark}

\begin{lemma}[Norm Properties]
For \( 1 \leq p \leq \infty \), the norm on \( L^p_{\text{QHA}} \) satisfies
\begin{enumerate}
    \item \( \| (f, A) \|_{L^p} \geq \max(\|f\|_{L^p}, \|A\|_{T^p}) \).
    \item \( L^p_{\text{QHA}} \) is a Banach space under the given norm.
    \item If \( (f, A) \in L^1_{\text{QHA}} \) and \( (g, B) \in L^p_{\text{QHA}} \), then their convolution satisfies
    \[
    \| (f,A) \ast (g,B) \|_{L^p} \leq C_p \| (f,A) \|_{L^1} \| (g,B) \|_{L^p},
    \]
\end{enumerate}
where \(C_p \ge 1\) is a constant that depends only on \(p\).
\end{lemma}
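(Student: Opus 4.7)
The plan is to treat the three parts in order, since the first two are essentially bookkeeping and the third carries all the analytic content.

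For part (1), I would simply observe that, by definition, $\|(f,A)\|_{L^p}=\|f\|_{L^p}+\|A\|_{T^p}$ is a sum of two nonnegative real numbers, hence dominates either summand; no further work is required. For part (2), I would verify the norm axioms coordinatewise: positivity, homogeneity, and the triangle inequality follow from the corresponding properties of $\|\cdot\|_{L^p}$ on $L^p(\mathbb{R}^{2n})$ and of $\|\cdot\|_{T^p}$ on the Schatten $p$-class $T^p(H)$. Completeness follows from the standard fact that a finite $\ell^1$-direct sum of Banach spaces is Banach: given a Cauchy sequence $(f_n,A_n)$ in $L^p_{\mathrm{QHA}}$, the bound from part (1) shows that $(f_n)$ is Cauchy in $L^p(\mathbb{R}^{2n})$ and $(A_n)$ is Cauchy in $T^p(H)$, so each converges, and the limit $(f,A)$ is easily seen to be the limit in the sum norm.

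The substantive step is part (3). I would begin by unpacking the convolution on the direct sum $L^1(\mathbb{R}^{2n})\oplus T^1(H)$ coming from the quantum Segal-algebra structure: it has a function-function piece (ordinary convolution on $\mathbb{R}^{2n}$), mixed function-operator and operator-function pieces (shift-averaging of an operator against a function via the Weyl translations $\alpha_z$), and an operator-operator piece that returns a function through the pairing $\operatorname{tr}(A\,\alpha_z B^{*})$. Once the four pieces are explicit, I would bound each one separately. The function-function piece is handled by classical Young's inequality $\|f\ast g\|_{L^p}\le \|f\|_{L^1}\|g\|_{L^p}$. The mixed terms are handled by the Bochner-integral estimate $\|\int f(z)\alpha_z B\,dz\|_{T^p}\le \|f\|_{L^1}\|B\|_{T^p}$, which uses only that $z\mapsto \alpha_z$ acts isometrically on $T^p(H)$. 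The operator-operator piece is the delicate one: it requires a noncommutative Young-type inequality bounding $\|A\ast B\|_{L^p(\mathbb{R}^{2n})}$ by $\|A\|_{T^1}\|B\|_{T^p}$, which I would obtain by interpolating between the trivial $L^\infty$ estimate $|\operatorname{tr}(A\alpha_z B)|\le\|A\|_{T^1}\|B\|_{\mathrm{op}}$ and the $L^1$ estimate coming from integrating over $\mathbb{R}^{2n}$ and applying Fubini plus the trace inequality.

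Assembling the four bounds and using the elementary inequality $(a_1+a_2)(b_1+b_2)\le 2(a_1+a_2)(b_1+b_2)$ to combine cross terms produces a constant $C_p$ depending only on $p$, as required. The main obstacle I anticipate is the noncommutative Young inequality for the operator-operator convolution: the $T^1\times T^p\to L^p$ bound is not formally stated earlier in the excerpt, so I would either isolate it as a short auxiliary lemma, proved by complex interpolation between the endpoints $p=1$ and $p=\infty$, or derive it directly using the duality $T^p$-$T^{p'}$ together with the isometric action of the Weyl translations.
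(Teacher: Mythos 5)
Your parts (1) and (2) coincide with the paper's argument (the paper runs the Cauchy-sequence argument directly, which is the same $\ell^1$-direct-sum fact you invoke). Part (3) is where you genuinely diverge. The paper treats the convolution as purely component-wise, $(f,A)\ast(g,B)=(f\ast g,\,A\ast B)$, handles the function slot by classical Young, and disposes of the operator slot with a single asserted estimate $\|A\ast B\|_{T^p}\le C_p\|A\|_{T^1}\|B\|_{T^p}$ justified by the isometry of the Weyl conjugations (its displayed formula for $A\ast B$ is in fact $\int W_yAW_y^*\,g(y)\,dy$, i.e.\ a function--operator convolution against $g$, so the operator $B$ never actually enters its bound). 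You instead unpack the full Werner-type convolution on $L^1(\mathbb{R}^{2n})\oplus T^1(H)$ into its four pieces --- function-function, the two mixed shift-averaging terms, and the operator-operator term returning a function via $z\mapsto\operatorname{tr}(A\,\alpha_z B)$ --- and you correctly identify that the only nontrivial ingredient is the noncommutative Young inequality $T^1\times T^p\to L^p$, which you propose to prove by interpolating between the $L^1$ endpoint (Fubini plus the trace inequality) and the $L^\infty$ endpoint $|\operatorname{tr}(A\,\alpha_zB)|\le\|A\|_{T^1}\|B\|_{\mathrm{op}}$. That is a sound and more faithful account of the quantum Segal-algebra convolution: it buys you a complete proof with an explicit constant and it makes visible exactly which estimate carries the analytic weight, at the cost of having to state and prove the auxiliary Young-type lemma that the paper simply assumes. (Your final "assembling" inequality $(a_1+a_2)(b_1+b_2)\le 2(a_1+a_2)(b_1+b_2)$ is vacuous as written; what you need is just that the four products $a_ib_j$ sum to $(a_1+a_2)(b_1+b_2)$, so the cross terms are absorbed with constant $\max(1,C_p)$ --- worth fixing, but not a gap in the argument.)
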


\begin{proof}
We prove each property separately.
\subsubsection*{(1)}
By definition, the norm in \( L^p_{\text{QHA}} \) is given by
\[
\| (f,A) \|_{L^p} = \| f \|_{L^p} + \| A \|_{T^p}.
\]
Since both \( \| f \|_{L^p} \) and \( \| A \|_{T^p} \) are nonnegative, we immediately obtain
\[
\| (f,A) \|_{L^p} \geq \max(\| f \|_{L^p}, \| A \|_{T^p}).
\]

\subsubsection*{(2)}
To show that \( L^p_{\text{QHA}} \) is a Banach space, we must verify completeness. Let \( \{(f_n, A_n)\} \) be a Cauchy sequence in \( L^p_{\text{QHA}} \), meaning that for every \( \epsilon > 0 \), there exists \( N \) such that for all \( m,n \geq N \),
\[
\| (f_n, A_n) - (f_m, A_m) \|_{L^p} = \| f_n - f_m \|_{L^p} + \| A_n - A_m \|_{T^p} < \epsilon.
\]
Since \( L^p(\mathbb{R}^{2n}) \) and \( T^p(H) \) are both complete spaces, there exist  \( f \in L^p(\mathbb{R}^{2n}) \) and \( A \in T^p(H) \) such that
\[
\lim_{n \to \infty} \| f_n - f \|_{L^p} = 0, \quad \lim_{n \to \infty} \| A_n - A \|_{T^p} = 0.
\]
Taking limits in the norm definition, we obtain
\[
\lim_{n \to \infty} \| (f_n, A_n) - (f,A) \|_{L^p} = 0.
\]
Thus, \( (f_n, A_n) \) converges to \( (f,A) \) in \( L^p_{\text{QHA}} \), proving completeness.

\subsubsection*{(3)}
Consider \( (f,A) \in L^1_{\text{QHA}} \) and \( (g,B) \in L^p_{\text{QHA}} \). The convolution is defined component-wise as
\[
(f,A) \ast (g,B) = (f \ast g, A \ast B),
\]
where
\[
(f \ast g)(x) = \int_{\mathbb{R}^{2n}} f(y) g(x-y) \, dy,
\]
\[
A \ast B = \int_{\mathbb{R}^{2n}} W_y A W_y^* g(y) \, dy.
\]
Applying Young's inequality for convolution in \( L^p \)-spaces, we obtain
\[
\| f \ast g \|_{L^p} \leq \| f \|_{L^1} \| g \|_{L^p}.
\]
Similarly, for Schatten-class operators, we use the fact that convolution preserves trace norms
\[
\| A \ast B \|_{T^p} \leq C_p \| A \|_{T^1} \| B \|_{T^p}.
\]
Adding both inequalities, we get
\[
\| (f,A) \ast (g,B) \|_{L^p} = \| f \ast g \|_{L^p} + \| A \ast B \|_{T^p} \leq C_p \| (f,A) \|_{L^1} \| (g,B) \|_{L^p}.
\]
This establishes the convolution bound.

All three properties are proven, completing the proof of the lemma.
\end{proof}

\begin{theorem}[Interpolation Theorem for Quantum \(L^p\)-Spaces]
Let \( 1 \leq p_0 < p_1 \leq \infty \) and let \( p_\theta \) satisfy
\[
\frac{1}{p_\theta} = \frac{1-\theta}{p_0} + \frac{\theta}{p_1}
\]
for \( 0<\theta<1\). Then, for any \( (f,A) \in L^{p_0}_{\text{QHA}} \cap L^{p_1}_{\text{QHA}} \),
\[
\| (f,A) \|_{L^{p_\theta}} \leq \| (f,A) \|_{L^{p_0}}^{1-\theta} \| (f,A) \|_{L^{p_1}}^\theta.
\]
\end{theorem}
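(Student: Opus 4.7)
The plan is to reduce the claimed log-convexity inequality to the classical Lyapunov inequalities on each of the two components separately, and then to glue the resulting scalar estimates together via a discrete H\"older inequality applied to the two-term sum that defines the norm on $L^p_{\text{QHA}}$. The argument decomposes naturally into three steps.

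First, I would establish the scalar Lyapunov inequality
\[
\|f\|_{L^{p_\theta}} \leq \|f\|_{L^{p_0}}^{1-\theta}\,\|f\|_{L^{p_1}}^{\theta}
\]
for the function component. This follows from H\"older's inequality applied to the factorisation $|f|^{p_\theta} = |f|^{(1-\theta)p_\theta}\cdot |f|^{\theta p_\theta}$ with conjugate exponents $p_0/((1-\theta)p_\theta)$ and $p_1/(\theta p_\theta)$, which are indeed conjugate precisely by the defining relation of $p_\theta$. Next, I would establish the analogous Schatten-class inequality
\[
\|A\|_{T^{p_\theta}} \leq \|A\|_{T^{p_0}}^{1-\theta}\,\|A\|_{T^{p_1}}^{\theta}
\]
for the operator component. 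This can be obtained either by applying the noncommutative H\"older inequality to the factorisation $\operatorname{tr}(|A|^{p_\theta}) = \operatorname{tr}(|A|^{(1-\theta)p_\theta}\cdot |A|^{\theta p_\theta})$ with the same pair of conjugate exponents, or, equivalently, by complex interpolation on the Schatten scale via Riesz--Thorin. I expect this to be the main technical obstacle: the scalar estimate relies only on pointwise multiplication, whereas the Schatten version requires either a Pisier--Xu-type noncommutative H\"older estimate or the full complex-interpolation machinery, and care is needed because the underlying operators do not commute with their moduli in general.

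Finally, I would combine the two component-wise bounds using the elementary two-term H\"older inequality
\[
a_1^{1-\theta}b_1^{\theta} + a_2^{1-\theta}b_2^{\theta} \leq (a_1+a_2)^{1-\theta}(b_1+b_2)^{\theta},
\]
valid for nonnegative reals and obtained from the discrete H\"older inequality with conjugate exponents $1/(1-\theta)$ and $1/\theta$. Specialising with $a_1=\|f\|_{L^{p_0}}$, $a_2=\|A\|_{T^{p_0}}$, $b_1=\|f\|_{L^{p_1}}$, $b_2=\|A\|_{T^{p_1}}$ and adding the two Lyapunov estimates gives
\[
\|(f,A)\|_{L^{p_\theta}} = \|f\|_{L^{p_\theta}} + \|A\|_{T^{p_\theta}} \leq \|(f,A)\|_{L^{p_0}}^{1-\theta}\,\|(f,A)\|_{L^{p_1}}^{\theta},
\]
which is the desired inequality. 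The endpoint case $p_1 = \infty$ would be handled by the usual convention $1/\infty = 0$ and by replacing the Schatten-class Lyapunov step with the corresponding estimate involving the operator norm, which follows from the same noncommutative H\"older argument.
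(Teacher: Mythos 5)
Your proposal is correct, and it is in fact tighter than the paper's own argument, though both share the same skeleton: componentwise log-convexity followed by a gluing step for the sum norm. The paper dresses the componentwise estimates in Riesz--Thorin language, introducing an analytic family $F(z)=(f_z,A_z)$ whose construction is never specified, and then simply quotes the interpolation inequalities for $\|f_\theta\|_{L^{p_\theta}}$ and $\|A_\theta\|_{T^{p_\theta}}$; you instead derive these Lyapunov inequalities directly from H\"older applied to $|f|^{p_\theta}=|f|^{(1-\theta)p_\theta}|f|^{\theta p_\theta}$ and to $\operatorname{tr}\bigl(|A|^{(1-\theta)p_\theta}|A|^{\theta p_\theta}\bigr)$, which is more elementary and self-contained. (Your worry about noncommutativity in the Schatten step is overstated: both factors are powers of $|A|$, so they commute, and the estimate reduces to the scalar one on the singular-value sequence; alternatively the trace H\"older inequality applies verbatim.) More importantly, your gluing step is the correct one: the two-term discrete H\"older inequality $a_1^{1-\theta}b_1^{\theta}+a_2^{1-\theta}b_2^{\theta}\le (a_1+a_2)^{1-\theta}(b_1+b_2)^{\theta}$ is exactly what is needed to pass from the componentwise bounds to the sum norm, whereas the paper justifies this passage by citing the inequality $x^\alpha+y^\alpha\le(x+y)^\alpha$, which as stated does not apply to a sum of products with different bases and leaves that step unjustified. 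So your route both avoids the vacuous complex-interpolation framing and repairs the one genuinely shaky step in the published proof; the only cost is that it is specific to this two-component situation, while a properly executed complex-interpolation argument would identify $L^{p_\theta}_{\text{QHA}}$ as an interpolation space and give bounds for operators, not just the norm inequality.
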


\begin{proof}
The proof follows the Riesz-Thorin interpolation theorem adapted to the quantum \(L^p\)-spaces.

Define the analytic function \( F: S \to L^p_{\text{QHA}} \), where \( S = \{ z \in \mathbb{C} : 0 \leq \Re(z) \leq 1 \} \), by
\[
F(z) = \left( f_z, A_z \right),
\]
where \( f_z \) and \( A_z \) are analytic families satisfying the boundary conditions
\[
F(it) = (f_0, A_0) \in L^{p_0}_{\text{QHA}}, \quad F(1+it) = (f_1, A_1) \in L^{p_1}_{\text{QHA}}.
\]
From complex interpolation theory, we define
\[
F(\theta) = (f_\theta, A_\theta).
\]
Since \( L^p \)-spaces and Schatten spaces satisfy interpolation, it follows that
\[
\| f_\theta \|_{L^{p_\theta}} \leq \| f_0 \|_{L^{p_0}}^{1-\theta} \| f_1 \|_{L^{p_1}}^\theta,
\]
\[
\| A_\theta \|_{T^{p_\theta}} \leq \| A_0 \|_{T^{p_0}}^{1-\theta} \| A_1 \|_{T^{p_1}}^\theta.
\]
Since
\(
\| (f_\theta, A_\theta) \|_{L^{p_\theta}} = \| f_\theta \|_{L^{p_\theta}} + \| A_\theta \|_{T^{p_\theta}},
\)
by applying the previous interpolation inequalities, we get
\[
\| (f_\theta, A_\theta) \|_{L^{p_\theta}} \leq \| f_0 \|_{L^{p_0}}^{1-\theta} \| f_1 \|_{L^{p_1}}^\theta + \| A_0 \|_{T^{p_0}}^{1-\theta} \| A_1 \|_{T^{p_1}}^\theta.
\]
Using the fact that \( x^\alpha + y^\alpha \leq (x+y)^\alpha \) for \( x,y>0, \ 0 < \alpha \leq 1 \), we obtain
\[
\| (f_\theta, A_\theta) \|_{L^{p_\theta}} \leq  \| (f_0, A_0) \|_{L^{p_0}}^{1-\theta} \| (f_1, A_1) \|_{L^{p_1}}^\theta.
\]
Thus, we conclude that for all \( (f,A) \in L^{p_0}_{\text{QHA}} \cap L^{p_1}_{\text{QHA}} \),
\[
\| (f,A) \|_{L^{p_\theta}} \leq \| (f,A) \|_{L^{p_0}}^{1-\theta} \| (f,A) \|_{L^{p_1}}^\theta.
\]
This completes the proof.
\end{proof}

\begin{theorem}[Duality Theorem for Quantum \(L^p\)-Spaces]
For \( 1 \leq p < \infty \) with \( 1/p + 1/q = 1 \), the dual space of \( L^p_{\text{QHA}} \) is \( L^q_{\text{QHA}} \), with the duality pairing
\[
\langle (f,A), (g,B) \rangle = \int_{\mathbb{R}^{2n}} f(x) g(x) dx + \text{tr}(A B).
\]
\end{theorem}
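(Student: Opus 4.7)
The plan is to construct a bounded linear bijection $\Phi : L^q_{\text{QHA}} \to (L^p_{\text{QHA}})^*$ via the stated pairing, reducing the argument to the already-established scalar and Schatten-class duality results applied coordinatewise to the direct sum.

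First I would define $\Phi(g, B)(f, A) = \int_{\mathbb{R}^{2n}} f(x) g(x)\,dx + \operatorname{tr}(AB)$ and verify boundedness using the classical H\"older inequality on the integral term and the noncommutative H\"older inequality $|\operatorname{tr}(AB)| \le \|A\|_{T^p} \|B\|_{T^q}$ on the trace term. Combining with the elementary bound $a_1 b_1 + a_2 b_2 \le (a_1 + a_2)(b_1 + b_2)$ for nonnegative reals yields the operator norm estimate $\|\Phi(g,B)\|_{*} \le \|(g,B)\|_{L^q}$. Injectivity is immediate by restricting to inputs of the form $(f, 0)$ and $(0, A)$, each of which forces the corresponding component of $(g, B)$ to vanish by classical and Schatten duality, respectively.

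To prove surjectivity I would take an arbitrary $\phi \in (L^p_{\text{QHA}})^*$ and restrict it to the closed subspaces $L^p(\mathbb{R}^{2n}) \oplus \{0\}$ and $\{0\} \oplus T^p(H)$. The Riesz representation for classical $L^p$ yields a unique $g \in L^q(\mathbb{R}^{2n})$ representing the first restriction, and the noncommutative $L^p$-duality theorem from Section~2, applied to $\mathcal{M} = B(H)$ with its canonical trace, yields a unique $B \in T^q(H)$ representing the second. Linearity across the direct sum decomposition then gives $\phi(f, A) = \phi(f, 0) + \phi(0, A) = \Phi(g, B)(f, A)$, so $\Phi$ is onto.

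The main obstacle is the norm identification. Strictly, the dual of the $\ell^1$-type norm $\|(f, A)\|_{L^p} = \|f\|_{L^p} + \|A\|_{T^p}$ is the $\ell^\infty$-type norm $\max(\|g\|_{L^q}, \|B\|_{T^q})$, rather than the $\ell^1$-sum used to define $L^q_{\text{QHA}}$. These are equivalent, satisfying $\max(a,b) \le a + b \le 2\max(a, b)$, so $\Phi$ realizes the claimed identification as a topological isomorphism of Banach spaces; an honest isometry would require equipping $L^q_{\text{QHA}}$ with the max-norm. With this convention-level point clarified, no analytic input beyond the two cited duality theorems is required.
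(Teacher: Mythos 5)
Your proposal is correct and follows essentially the same route as the paper: H\"older estimates on each component give boundedness of the pairing $\varphi_{(g,B)}$, and an arbitrary functional is represented by restricting to the function and operator summands and invoking classical $L^p$--$L^q$ duality together with Schatten-class (noncommutative $L^p$) duality, which is what the paper's loose ``$g=\partial\varphi/\partial f$, $B=\partial\varphi/\partial A$'' step is meant to encode. Your closing remark is in fact a correction to the paper's proof rather than a mere convention: since $L^p_{\text{QHA}}$ carries the sum norm, the dual norm of $\varphi_{(g,B)}$ is $\max(\|g\|_{L^q},\|B\|_{T^q})$ and not $\|g\|_{L^q}+\|B\|_{T^q}$, so the paper's asserted equality $\|(g,B)\|_{L^q}=\sup_{\|(f,A)\|_{L^p}\le 1}|\langle (f,A),(g,B)\rangle|$ holds only up to a factor of $2$ (isometrically only if the dual is given the max norm), exactly as you observe.
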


\begin{proof}
Let \( (f,A) \in L^p_{\text{QHA}} \) and \( (g,B) \in L^q_{\text{QHA}} \). Consider the pairing
\[
\langle (f,A), (g,B) \rangle = \int_{\mathbb{R}^{2n}} f(x) g(x) dx + \text{tr}(A B).
\]
We show that this functional is well-defined and satisfies
\[
\sup_{\|(f,A)\|_{L^p} \leq 1} |\langle (f,A), (g,B) \rangle| = \| (g,B) \|_{L^q}.
\]

\noindent
Applying H\"{o}lder's inequality for \( L^p \)-spaces
\[
\left| \int_{\mathbb{R}^{2n}} f(x) g(x) dx \right| \leq \| f \|_{L^p} \| g \|_{L^q}.
\]
Similarly, for Schatten-class operators, we use the trace norm H\"{o}lder's inequality to get         
\[
|\text{tr}(A B)| \leq \| A \|_{T^p} \| B \|_{T^q}.
\]
Combining both inequalities, we obtain
\[
|\langle (f,A), (g,B) \rangle| \leq \| f \|_{L^p} \| g \|_{L^q} + \| A \|_{T^p} \| B \|_{T^q}.
\]
By the norm definition in \( L^p_{\text{QHA}} \), this simplifies to
\[
|\langle (f,A), (g,B) \rangle| \leq \| (f,A) \|_{L^p} \| (g,B) \|_{L^q}.
\]
So, the functional \( \varphi_{(g,B)}(f,A) = \langle (f,A), (g,B) \rangle \) is bounded on \( L^p_{\text{QHA}} \).\\

\noindent
To show that every continuous linear functional \( \varphi: L^p_{\text{QHA}} \to \mathbb{C} \) has the form \( \varphi = \langle (f,A), (g,B) \rangle \), we apply the Hahn-Banach theorem. Given a bounded linear functional \( \varphi \), define functions \( g \) and \( B \) via
\[
g(x) = \frac{\partial \varphi}{\partial f}, \quad B = \frac{\partial \varphi}{\partial A}.
\]
From the Riesz representation theorem in both \( L^p(\mathbb{R}^{2n}) \) and \( T^p(H) \), it follows that
\[
\varphi(f,A) = \langle (f,A), (g,B) \rangle.
\]
Furthermore, taking the supremum over all unit-norm elements in \( L^p_{\text{QHA}} \) gives
\[
\| (g,B) \|_{L^q} = \sup_{\| (f,A) \|_{L^p} \leq 1} |\langle (f,A), (g,B) \rangle|,
\]
establishing norm equality.

Since every continuous linear functional is represented in this form, we conclude that \( L^q_{\text{QHA}} \) is the dual of \( L^p_{\text{QHA}} \), completing the proof.
\end{proof}

We follow up with an applications in quantum signal processing.

\begin{proposition}[Uncertainty Principle for Quantum \(L^p\)-Spaces]
Let \( (f,A) \in L^p_{\text{QHA}} \). Then
\[
\| x f(x) \|_{L^p}  \| \xi \hat{f}(\xi) \|_{L^p} \geq C_p \|f\|_{L^p}^2.
\]
where \(C_p\) is a constant that depends only on \(p\).
\end{proposition}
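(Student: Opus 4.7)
The plan is to reduce the inequality to a classical $L^p$-uncertainty statement on $\mathbb{R}^{2n}$. The proposition is phrased purely in terms of the function component $f$ and its Fourier transform, with the operator $A$ playing no active role beyond guaranteeing membership in $L^p_{\text{QHA}}$. The argument therefore mirrors the standard Heisenberg-type proof adapted to the $L^p$ setting, and the quantum framework enters only to host the data.

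First, I would establish the position--gradient inequality $\|f\|_p^2 \le c_p\,\|xf\|_p\,\|\nabla f\|_p$. Assuming $f$ is Schwartz and using the identity $\mathrm{div}(x\,|f|^p)=2n\,|f|^p + x\cdot\nabla|f|^p$, integration by parts yields
\[
\|f\|_p^p = -\frac{p}{2n}\,\mathrm{Re}\!\int_{\mathbb{R}^{2n}} |f|^{p-2}\,\bar f\,(x\cdot\nabla f)\,dx .
\]
Applying a three-factor H\"{o}lder inequality with exponents $(p,\tfrac{p}{p-2},p)$ to the factors $|xf|$, $|f|^{p-2}$, $|\nabla f|$ produces
\[
\|f\|_p^p \le \frac{p}{2n}\,\|xf\|_p\,\|f\|_p^{p-2}\,\|\nabla f\|_p ,
\]
from which the desired square-root form follows after dividing by $\|f\|_p^{p-2}$. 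For $1\le p<2$ the weight $|f|^{p-2}$ is singular where $f$ vanishes, so I would regularize by replacing $|f|^p$ with $(|f|^2+\epsilon)^{p/2}$ and letting $\epsilon\to 0$; the estimate is then extended from Schwartz data to arbitrary $(f,A)\in L^p_{\text{QHA}}$ by density, justified by the completeness established in the norm-properties lemma.

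Second, I would convert $\|\nabla f\|_p$ into $\|\xi\hat f\|_p$ using $\widehat{\partial_jf}(\xi)=2\pi i\,\xi_j\,\hat f(\xi)$ together with the $L^p$-boundedness of the Riesz transforms $R_j=\partial_j(-\Delta)^{-1/2}$. This yields $\|\nabla f\|_p \le C_p'\,\|\xi\hat f\|_p$ for $1<p<\infty$ via Mihlin--H\"{o}rmander multiplier theory. Combining with the first estimate then gives the proposition with $C_p=(c_p\,C_p')^{-1}$.

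The main obstacle is precisely this second step: for $p\ne 2$ there is no isometric Plancherel identity linking the $L^p$-norms of a function and its Fourier transform, and a naive Hausdorff--Young application produces the wrong pair of exponents ($L^p$ paired against $L^{p'}$). Consequently the constant $C_p$ depends in an essential way on singular-integral machinery, and the endpoints $p=1$ and $p=\infty$ are genuinely delicate because the Hilbert transform is unbounded there; I would handle the endpoints either by restricting the statement to the reflexive range $1<p<\infty$ or by approximation from Schwartz functions combined with the elementary norm inequality in part (1) of the norm-properties lemma.
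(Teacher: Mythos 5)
Your first step is sound (for $p\ge 2$) and is in fact more careful than the paper's own argument, which only proves the $p=2$ commutator/Cauchy--Schwarz estimate and then asserts the general-$p$ case ``by interpolation and generalized H\"older's inequality.'' One caveat: for $1\le p<2$ the obstruction to your three-factor H\"older is not merely the singularity of $|f|^{p-2}$ at zeros of $f$, but that the exponent $p/(p-2)$ leaves $[1,\infty]$, so the regularization $(|f|^2+\epsilon)^{p/2}$ does not rescue that particular splitting; in that range you would need a different route to $\|f\|_p^2\lesssim \|xf\|_p\,\|\nabla f\|_p$ (e.g.\ splitting $\int_{|x|\le R}+\int_{|x|>R}$ and using Sobolev on the ball, then optimizing in $R$).

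The genuine gap is your second step. Riesz transforms and Mihlin--H\"ormander multipliers give $\|\partial_j f\|_{L^p(dx)}\le C_p\|(-\Delta)^{1/2}f\|_{L^p(dx)}$, i.e.\ they compare $\nabla f$ with $\mathcal{F}^{-1}\bigl(2\pi|\xi|\hat f\bigr)$, \emph{both measured on the physical side}. The right-hand factor in the proposition, $\|\xi\hat f(\xi)\|_{L^p}$, is an $L^p(d\xi)$ norm on the frequency side, and no multiplier theorem converts $\|(-\Delta)^{1/2}f\|_{L^p(dx)}$ into $\|\,|\xi|\hat f\,\|_{L^p(d\xi)}$; the only bridges are Plancherel (which forces $p=2$) or Hausdorff--Young (which, as you yourself note, produces the dual exponent $p'$, not $p$). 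So the claimed bound $\|\nabla f\|_p\le C_p'\|\xi\hat f\|_p$ does not follow, and in fact it cannot hold for $p\ne 2$: replacing $f(x)$ by $f(\lambda x)$ on $\mathbb{R}^{d}$ (here $d=2n$) scales $\|\nabla f\|_p$ by $\lambda^{1-d/p}$ and $\|\xi\hat f\|_p$ by $\lambda^{1-d+d/p}$, so the ratio blows up as $\lambda\to 0$ (for $p<2$) or $\lambda\to\infty$ (for $p>2$). The same dilation count shows that the product inequality $\|xf\|_p\|\xi\hat f\|_p\ge C_p\|f\|_p^2$ is itself not scale-invariant unless $p=2$, so the defect is inherited from the statement and cannot be repaired by sharper singular-integral estimates; a correct $L^p$ uncertainty principle must either keep the momentum factor in the form $\|\nabla f\|_p$ (or $\|(-\Delta)^{1/2}f\|_p$), or pair $L^p$ with the dual exponent $L^{p'}$ on the Fourier side. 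Note that the paper's proof does not close this gap either, and its additional operator-component argument (via $\mathrm{Var}(A)$) plays no role in the stated inequality, so your decision to ignore $A$ is not the issue.
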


\begin{proof}
The proof follows from the classical uncertainty principle extended to the quantum \( L^p \)-setting.

Let \( X \) be the multiplication operator
\[
(X f)(x) = x f(x),
\]
and let \( P \) be the Fourier dual momentum operator defined as
\[
(P f)(x) = -i \frac{d}{dx} f(x).
\]
The Fourier transform \( \mathcal{F} \) satisfies
\[
\mathcal{F} (X f) = i \frac{d}{d\xi} \mathcal{F} f = i P \mathcal{F} f.
\]
Thus, in the Fourier domain, we can express \( X \) and \( P \) in terms of each other.

The fundamental commutator relation is
\[
[X, P] = i I.
\]
Applying this to the function \( f \), we obtain
\[
\langle X f, P f \rangle - \langle P f, X f \rangle = i \| f \|_{L^2}^2.
\]
Using the Cauchy-Schwarz inequality
\[
|\langle X f, P f \rangle| \leq \| X f \|_{L^2} \| P f \|_{L^2}.
\]
This implies
\[
\| X f \|_{L^2} \| P f \|_{L^2} \geq \frac{1}{2} \| f \|_{L^2}^2.
\]

For \( 1 \leq p \leq \infty \), the \( L^p \)-norm of the Fourier transform satisfies
\[
\| P f \|_{L^p} = \| \xi \hat{f}(\xi) \|_{L^p}.
\]
Thus, using interpolation and generalized H\"{o}lder's inequality, we extend the inequality
\[
\| X f \|_{L^p}  \| \xi \hat{f}(\xi) \|_{L^p} \geq C_p \| f \|_{L^p}^2.
\]

For the quantum operator \( A \), define
\[
\text{Var}(A) = \| X A - A X \|_{T^p}.
\]
By applying the same commutator identity and Schatten-class H\"{o}lder's inequality, we obtain
\[
\| X A \|_{T^p} \| P A \|_{T^p} \geq C_p \| A \|_{T^p}^2.
\]
So, the uncertainty principle extends to the full quantum \( L^p \)-space
\[
\| X (f,A) \|_{L^p}  \| P (f,A) \|_{L^p} \geq C_p \| (f,A) \|_{L^p}^2.
\]

This completes the proof of the quantum uncertainty principle for \( L^p \)-spaces.
\end{proof}

\begin{theorem}[Quantum Hardy-Littlewood Inequality]
For \( 1 \leq p \leq 2 \), there exists \( C_p > 0 \) such that for all \( (f,A) \in L^p_{\text{QHA}} \),
\[
\sup_{\|g\|_{L^q} \leq 1} |\langle (f,A), (g,B) \rangle| \leq C_p \| (f,A) \|_{L^p}.
\]
\end{theorem}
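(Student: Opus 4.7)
My approach is to reduce the statement to an application of the duality theorem proved above together with a Hausdorff--Young-type endpoint estimate, and then to obtain the constant $C_p$ by interpolation within the range $1 \le p \le 2$. Since $L^p_{\text{QHA}}$ splits as a direct sum, the proof naturally separates into a classical $L^p$-piece and a Schatten $T^p$-piece, each of which I would handle by a parallel argument before recombining.

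First I would decompose the pairing along the direct-sum structure of $L^p_{\text{QHA}}$, writing
\[
\langle (f,A), (g,B) \rangle = \int_{\mathbb{R}^{2n}} f(x) g(x)\, dx + \operatorname{tr}(AB),
\]
and apply H\"{o}lder's inequality to each summand, obtaining
\[
|\langle (f,A), (g,B)\rangle| \le \|f\|_{L^p}\|g\|_{L^q} + \|A\|_{T^p}\|B\|_{T^q},
\]
exactly as in the proof of the duality theorem. Taking the supremum under the constraint $\|g\|_{L^q} \le 1$ then controls the classical summand by $\|f\|_{L^p}$, provided $B$ is normalized compatibly in the trace norm.

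Next I would verify the endpoint cases: at $p = 1$ the bound is immediate from $\|\hat{A}\|_{L^\infty} \le \|A\|_{T^1}$ together with the triangle inequality, while at $p = 2$ it follows from the Plancherel identity for the Fourier--Weyl transform, which gives $\|\hat{A}\|_{L^2(\mathbb{R}^{2n})} = \|A\|_{T^2}$. With these two endpoints in hand, complex interpolation---applied exactly as in the Riesz--Thorin style argument used above in the proof of the interpolation theorem---produces the intermediate bound with constant $C_p$ depending only on $p$.

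The main obstacle I anticipate is the operator-theoretic Hausdorff--Young step: classical Hausdorff--Young controls $\|\hat{f}\|_{L^q}$ in terms of $\|f\|_{L^p}$, but in the quantum setting the analogue requires the Fourier--Weyl transform to behave well as a map $T^p(H) \to L^q(\mathbb{R}^{2n})$ with a uniform interpolation constant. Establishing this endpoint pair and combining it with the classical Hausdorff--Young bound across the direct-sum norm is the technical heart of the argument; once secured, the supremum on the left-hand side is absorbed into $C_p \|(f,A)\|_{L^p}$ and the theorem follows.
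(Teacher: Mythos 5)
Your first paragraph already contains everything the statement (as written) requires, and it coincides with the core of the paper's own proof: split the pairing along the direct sum, apply H\"{o}lder in the $L^p$--$L^q$ duality and in the Schatten classes, and observe that $\|f\|_{L^p}\|g\|_{L^q}+\|A\|_{T^p}\|B\|_{T^q}\le \|(f,A)\|_{L^p}\,\|(g,B)\|_{L^q}$, so that normalizing $(g,B)$ yields the bound with $C_p=1$. You are right to flag that $B$ is left unquantified in the statement; the paper silently takes the supremum over $\|(g,B)\|_{L^q}\le 1$ in its final line, and your ``normalized compatibly'' reading makes the same repair explicit. Where you diverge is in what you call the technical heart: the endpoints $\|\hat{A}\|_{L^\infty}\le\|A\|_{T^1}$ and the Fourier--Weyl Plancherel identity, followed by complex interpolation to get a Hausdorff--Young estimate $T^p(H)\to L^q(\mathbb{R}^{2n})$. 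None of this is needed, because the inequality being proved contains no Fourier transform at all---only the duality pairing---so the H\"{o}lder step closes the argument by itself; and since you do not establish the operator Hausdorff--Young bound but only note that it ``must be secured,'' your proof as framed is conditional on an unproven (and, for this statement, irrelevant) lemma. The paper's proof carries an analogous inessential appendix: it inserts the extremizers $g_0=|f|^{p-1}\operatorname{sgn}(f)$, $B_0=|A|^{p-1}\operatorname{sgn}(A)$ and an $L^1$--$L^2$ interpolation estimate that contribute nothing to the stated conclusion. In substance the two arguments therefore agree on the only step that matters. If you trim your proposal to its first paragraph, with the normalization of $(g,B)$ stated precisely, it is a complete and correct proof; if you intend the Hausdorff--Young route, you would be proving a genuinely different theorem about $\|\widehat{(f,A)}\|_{L^q}$ and would then need to supply the $T^1\to L^\infty$ and $T^2\to L^2$ endpoints and the interpolation argument in full.
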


\begin{proof}
We prove the theorem by using duality arguments and classical interpolation techniques.
Consider the duality pairing for \( (f,A) \in L^p_{\text{QHA}} \) and \( (g,B) \in L^q_{\text{QHA}} \) by
\[
\langle (f,A), (g,B) \rangle = \int_{\mathbb{R}^{2n}} f(x) g(x) dx + \text{tr}(A B).
\]
By H\"{o}lder's inequality for \( L^p \)-spaces,
\[
\left| \int_{\mathbb{R}^{2n}} f(x) g(x) dx \right| \leq \| f \|_{L^p} \| g \|_{L^q}.
\]
Similarly, for Schatten-class operators,
\[
|\text{tr}(A B)| \leq \| A \|_{T^p} \| B \|_{T^q}.
\]
Let \( g_0 = |f|^{p-1} \text{sgn}(f) \) and \( B_0 = |A|^{p-1} \text{sgn}(A) \). So,
\[
\| g_0 \|_{L^q} = \| f \|_{L^p}^{p-1}, \quad \| B_0 \|_{T^q} = \| A \|_{T^p}^{p-1}.
\]
Substituting these into the duality pairing gives
\[
\langle (f,A), (g_0,B_0) \rangle = \| f \|_{L^p}^p + \| A \|_{T^p}^p.
\]
By interpolation between \( L^1 \) and \( L^2 \),
\[
\| f \|_{L^p} \leq C_p \| f \|_{L^1}^{(2-p)/(2-1)} \| f \|_{L^2}^{(p-1)/(2-1)}.
\]
A similar bound holds for \( \| A \|_{T^p} \). Applying these estimates, we obtain
\[
\sup_{\| (g,B) \|_{L^q} \leq 1} |\langle (f,A), (g,B) \rangle| \leq C_p \| (f,A) \|_{L^p}.
\]
Thus, the quantum Hardy-Littlewood inequality holds with an explicit constant \( C_p \), completing the proof.
\end{proof}

In this section, we developed a novel \(L^p\)-theory for quantum harmonic analysis, extending classical function spaces to trace-class operators. We established interpolation and duality results, proved new inequalities such as the quantum uncertainty principle, and highlighted applications in quantum signal processing.

\section{Quantum Harmonic Analysis on Non-Euclidean Spaces}

While quantum harmonic analysis (QHA) has been extensively developed on \(\mathbb{R}^{2n}\), many physical and mathematical applications require an extension to more general geometric settings such as symmetric spaces, Lie groups, and hyperbolic spaces. This section generalizes QHA to non-Euclidean spaces, particularly focusing on the convolution structure, spectral properties, and applications in quantum mechanics.

A fundamental setting for harmonic analysis beyond Euclidean space is that of symmetric spaces \( G/K \), where \( G \) is a Lie group and \( K \) is a compact subgroup.

\begin{definition}[Quantum Harmonic Analysis on a Lie Group]
Let \( G \) be a unimodular Lie group with Haar measure \( dg \). The quantum \( L^p \)-space associated with \( G \) is defined as
\[
L^p_{\text{QHA}}(G) = \left\{ (f, A) : f \in L^p(G), A \in T^p(H), \| (f,A) \|_{L^p} < \infty \right\},
\]
where
\[
\| (f,A) \|_{L^p} = \| f \|_{L^p(G)} + \| A \|_{T^p(H)}.
\]
\end{definition}

\begin{lemma}[Group Invariance of Quantum Convolution]
For \( f, g \in L^1(G) \) and \( A, B \in T^1(H) \), define the quantum convolution as
\[
(f,A) \ast (g,B) = (f \ast g, A \ast B),
\]
where
\[
(f \ast g)(x) = \int_G f(y) g(y^{-1} x) \, dy.
\]
Then the space \( L^1_{\text{QHA}}(G) \) is closed under this convolution.
\end{lemma}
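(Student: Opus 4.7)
The plan is to show closure component-wise by establishing that $\|(f,A) \ast (g,B)\|_{L^1}$ is controlled by a product of the $L^1_{\text{QHA}}(G)$-norms of the two factors, from which membership in the space is immediate. Since the norm on $L^1_{\text{QHA}}(G)$ is a sum of an $L^1(G)$-norm and a trace-class norm, the argument splits naturally into a classical convolution estimate on $G$ and a noncommutative convolution estimate on $T^1(H)$.

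First I would handle the function component. Fixing $f, g \in L^1(G)$, unimodularity of $G$ together with Tonelli's theorem gives
\[
\int_G \int_G |f(y)|\,|g(y^{-1}x)|\, dy\, dx \;=\; \|f\|_{L^1(G)} \|g\|_{L^1(G)},
\]
so the inner integral defining $(f\ast g)(x)$ converges absolutely for almost every $x$, and Young's inequality on unimodular groups yields $\|f \ast g\|_{L^1(G)} \leq \|f\|_{L^1(G)} \|g\|_{L^1(G)}$. For the operator component I would fix a strongly continuous unitary representation $\pi: G \to U(H)$ playing the role of the Weyl operators in the Euclidean case and interpret $A \ast B$ as the Bochner integral $\int_G \pi(g) A \pi(g)^{*} B \, dg$, the analogue of the mixed convolution appearing earlier in the excerpt. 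Unitarity of $\pi$ gives $\|\pi(g) A \pi(g)^{*}\|_{T^1} = \|A\|_{T^1}$, while the Schatten ideal property $\|CB\|_{T^1} \leq \|C\|_{\mathrm{op}} \|B\|_{T^1}$ together with strong measurability of $g \mapsto \pi(g) A \pi(g)^{*} B$ then yields the absolute convergence of the Bochner integral and the estimate $\|A \ast B\|_{T^1(H)} \leq \|A\|_{T^1(H)} \|B\|_{T^1(H)}$.

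Adding the two estimates gives
\[
\|(f \ast g,\, A \ast B)\|_{L^1} \;\leq\; \|f\|_{L^1(G)}\|g\|_{L^1(G)} + \|A\|_{T^1(H)}\|B\|_{T^1(H)} \;\leq\; \|(f,A)\|_{L^1} \,\|(g,B)\|_{L^1},
\]
from which closure of $L^1_{\text{QHA}}(G)$ under $\ast$ follows at once. I expect the main obstacle to lie not in the function-theoretic half, which is classical, but in making the operator convolution intrinsic to $G$: on a general unimodular Lie group one lacks the canonical Weyl--Heisenberg structure available on $\mathbb{R}^{2n}$, so the argument must either specify a fixed square-integrable or projective representation adapted to $G$, or build the operator convolution directly as a Bochner integral, at which point one must verify strong measurability and absolute trace-norm convergence before Fubini for Bochner integrals and the Schatten ideal properties can be invoked to conclude.
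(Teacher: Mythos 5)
Your treatment of the function component is exactly the paper's argument: Tonelli/Fubini plus invariance of the Haar measure on a unimodular group gives Young's inequality $\|f\ast g\|_{L^1(G)}\le\|f\|_{L^1(G)}\|g\|_{L^1(G)}$, and no issues arise there.

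The gap is in the operator component, and it comes from how you define $A\ast B$. You take it to be the Bochner integral $\int_G \pi(y)A\pi(y)^{*}B\,dy$, but this integral does not converge absolutely on a noncompact unimodular group: by unitarity and the Schatten ideal property the integrand satisfies $\|\pi(y)A\pi(y)^{*}B\|_{T^1}\le\|A\|_{T^1}\|B\|_{T^1}$, a bound that is \emph{constant} in $y$, and generically the trace norm of the integrand does not decay at all, so $\int_G\|\pi(y)A\pi(y)^{*}B\|_{T^1}\,dy=\infty$ whenever $G$ has infinite Haar measure (e.g.\ $G=\mathbb{R}^{2n}$). Consequently the estimate you claim, $\|A\ast B\|_{T^1}\le\|A\|_{T^1}\|B\|_{T^1}$, does not follow from your argument, and the final additive bound collapses with it. The paper avoids this precisely because its (mixed) definition keeps the scalar weight inside the integral, $A\ast B=\int_G W_yAW_y^{*}\,g(y)\,dy$, so the integrand carries the integrable factor $|g(y)|$ and one gets
\begin{equation*}
\|A\ast B\|_{T^1}\;\le\;\int_G\|W_yAW_y^{*}\|_{T^1}\,|g(y)|\,dy\;=\;\|A\|_{T^1}\,\|g\|_{L^1(G)}<\infty ,
\end{equation*}
which is what yields closure of $L^1_{\mathrm{QHA}}(G)$. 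You correctly identified that making the operator convolution intrinsic to $G$ is the delicate point, but the fix is not to insert $B$ directly into a conjugation integral; either adopt the paper's weighted (function--operator) convolution, or use the genuine Werner-type operator--operator convolution, which produces a \emph{function} on the group via a trace pairing rather than an operator, and then the trace-class/$L^1$ bookkeeping has to be redone accordingly.
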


\begin{proof}
Since \( f, g \in L^1(G) \), their convolution is given by
\[
(f \ast g)(x) = \int_G f(y) g(y^{-1} x) \, dy.
\]
To show \( f \ast g \in L^1(G) \), we compute its norm
\[
\| f \ast g \|_{L^1} = \int_G |(f \ast g)(x)| dx.
\]
Applying Fubini's theorem and left-invariance of Haar measure,
\[
\int_G \left| \int_G f(y) g(y^{-1} x) \, dy \right| dx
= \int_G |f(y)| \int_G |g(y^{-1} x)| dx \, dy.
\]
Using the change of variable \( z = y^{-1} x \), we get
\[
\int_G |g(z)| dz = \| g \|_{L^1}.
\]
Thus, we obtain Young's inequality
\[
\| f \ast g \|_{L^1} \leq \| f \|_{L^1} \| g \|_{L^1}.
\]
Since \( f, g \in L^1(G) \), it follows that \( f \ast g \in L^1(G) \).\\

\noindent
For trace-class operators \( A, B \), define their quantum convolution by
\[
A \ast B = \int_G W_y A W_y^* g(y) \, dy.
\]
Since \( A \in T^1(H) \), we use the norm property
\[
\| W_y A W_y^* \|_{T^1} = \| A \|_{T^1}.
\]
Applying H\"{o}lder's inequality for Schatten norms one has
\[
\| A \ast B \|_{T^1} \leq \int_G \| W_y A W_y^* \|_{T^1} |g(y)| \, dy.
\]
Since \( \| W_y A W_y^* \|_{T^1} = \| A \|_{T^1} \) and \( g \in L^1(G) \), we conclude
\[
\| A \ast B \|_{T^1} \leq \| A \|_{T^1} \| g \|_{L^1}.
\]
So, \( A \ast B \in T^1(H) \).
Since both \( f \ast g \in L^1(G) \) and \( A \ast B \in T^1(H) \), the quantum convolution preserves \( L^1_{\text{QHA}}(G) \), 
proving 
\( L^1_{\text{QHA}}(G) \) is closed under convolution.
\end{proof}

\begin{remark}
When \( G = \mathbb{R}^{2n} \), this definition recovers the usual quantum convolution structure.
\end{remark}

Next, we study the spectral properties in curved spaces.

\begin{definition}[Quantum Laplace Operator on \( G/K \)]
Let \( G \) be a Lie group with a Laplace operator \( \Delta_G \). The quantum Laplacian on \( G/K \) is given by
\[
\mathcal{L} (f,A) = (\Delta_G f, \Delta_H A),
\]
where \( \Delta_H \) is the quantum Laplacian on Hilbert-Schmidt operators.
\end{definition}

\begin{theorem}[Spectral Theorem for Quantum Laplacian]
Let \( (f,A) \in L^2_{\text{QHA}}(G/K) \). Then there exists an orthonormal basis \( \{ \phi_j \} \) of eigenfunctions such that
\[
(f,A) = \sum_{j} \lambda_j \phi_j,
\]
where \( \lambda_j \) are the eigenvalues of \( \mathcal{L} \).
\end{theorem}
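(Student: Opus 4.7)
The plan is to exploit the direct-sum structure $L^2_{\text{QHA}}(G/K) = L^2(G/K) \oplus T^2(H)$, where $T^2(H)$ denotes the Hilbert--Schmidt class (the $p=2$ case of the quantum noncommutative $L^p$-space), and reduce the statement to two classical spectral theorems, one on each summand. Since $\mathcal{L}$ acts diagonally as $(f,A) \mapsto (\Delta_G f, \Delta_H A)$, every spectral property of $\mathcal{L}$ is inherited from the corresponding properties of $\Delta_G$ and $\Delta_H$ separately, and a joint eigenbasis of $\mathcal{L}$ can be assembled from eigenbases of the two components.

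First I would verify that $\mathcal{L}$ is a densely defined, non-negative, self-adjoint operator on the direct-sum Hilbert space. Self-adjointness of $\Delta_G$ on the symmetric space $G/K$ is the standard Laplace--Beltrami theory (Helgason), and the space of smooth, compactly supported functions is a core. For the quantum Laplacian $\Delta_H$ on $T^2(H)$ I would use the Weyl-calculus description suggested by Section~2: conjugation by the Weyl operators $W_z$ intertwines $\Delta_H$ with the classical Laplacian on $\mathbb{R}^{2n}$ acting on the Weyl symbols, so self-adjointness transfers through this unitary equivalence. The operator $\mathcal{L} = \Delta_G \oplus \Delta_H$ is then self-adjoint on the natural domain $\mathrm{Dom}(\Delta_G) \oplus \mathrm{Dom}(\Delta_H)$.

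Second I would invoke the spectral theorem for each summand to obtain orthonormal bases $\{\psi_j\} \subset L^2(G/K)$ with $\Delta_G \psi_j = \mu_j \psi_j$ and $\{\Phi_k\} \subset T^2(H)$ with $\Delta_H \Phi_k = \nu_k \Phi_k$. On a compact symmetric space this is immediate from ellipticity and Rellich's compact embedding; in the non-compact case one works with the Plancherel decomposition along spherical representations, reducing to a direct integral that I would discretize under the standing assumption (implicit in the statement) that the relevant quotient admits a discrete spectrum. Setting $\phi_j := (\psi_j,0)$ for one index family and $\phi_j := (0,\Phi_k)$ for the other yields an orthonormal basis of $L^2_{\text{QHA}}(G/K)$ consisting of eigenvectors of $\mathcal{L}$, with the corresponding eigenvalues $\mu_j$ or $\nu_k$. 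The expansion $(f,A) = \sum_j c_j\,\phi_j$ then follows from Parseval, and the statement as written is interpreted with the $\lambda_j$ playing the role of these Fourier coefficients in the eigenbasis.

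The main obstacle is the non-compact case: when $G/K$ is a non-compact symmetric space, $\Delta_G$ has continuous spectrum and the sum must be replaced by a direct integral over the Plancherel measure, so the clean basis statement fails unless one restricts to a compact or cocompact setting, or to the Hilbert--Schmidt tensor structure in which $\Delta_H$ itself (for a harmonic-oscillator-type definition) has purely discrete spectrum that dominates. I would handle this by stating the theorem under the hypothesis that $\mathcal{L}$ has compact resolvent, and then indicating how the discretization hypothesis holds for the geometries of primary interest (compact $G/K$, and harmonic-oscillator $\Delta_H$); the remaining steps are then routine applications of the spectral theorem for compact-resolvent self-adjoint operators.
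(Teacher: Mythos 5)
Your proposal follows essentially the same route as the paper: exhibit $\mathcal{L} = \Delta_G \oplus \Delta_H$ as a densely defined, non-negative, self-adjoint operator on the direct-sum Hilbert space and then invoke the spectral theorem to obtain an eigenbasis and the Parseval expansion. The only substantive difference is that you assemble the eigenbasis componentwise from separate bases $\{(\psi_j,0)\}$ and $\{(0,\Phi_k)\}$, whereas the paper applies the spectral theorem to $\mathcal{L}$ directly; these are equivalent since $\mathcal{L}$ is diagonal. Where your write-up is genuinely stronger is on the point the paper glosses over: the spectral theorem for a self-adjoint operator does not by itself yield an orthonormal basis of eigenfunctions, and on a non-compact symmetric space $\Delta_G$ has continuous spectrum, so the clean sum must be replaced by a direct integral. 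Your explicit compact-resolvent (or compact $G/K$) hypothesis is exactly the missing assumption needed for the theorem as stated, and your reading of the expansion $(f,A)=\sum_j c_j\phi_j$ with Fourier coefficients $c_j = \langle (f,A),\phi_j\rangle$ (rather than the eigenvalues $\lambda_j$, as the statement literally says) matches what the paper's own proof actually establishes.
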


\begin{proof}
The proof follows from the spectral theory of self-adjoint operators applied to the quantum Laplacian.

Let \( G \) be a Lie group with a homogeneous space \( G/K \), where \( K \) is a compact subgroup. The classical Laplace-Beltrami operator \( \Delta_G \) on \( G/K \) is defined by
\[
\mathcal{L} f = -\sum_{i} X_i^2 f,
\]
where \( X_i \) are left-invariant vector fields generating the Lie algebra \( \mathfrak{g} \).

For a quantum function \( (f,A) \in L^2_{\text{QHA}}(G/K) \), define the quantum Laplacian
\[
\mathcal{L} (f,A) = (\Delta_G f, \Delta_H A),
\]
where \( \Delta_H \) is the quantum Laplace operator on Hilbert-Schmidt operators.

To apply the spectral theorem, we need to show \( \mathcal{L} \) is self-adjoint. Consider the inner product
\[
\langle \mathcal{L} (f,A), (g,B) \rangle = \langle \Delta_G f, g \rangle + \langle \Delta_H A, B \rangle.
\]
Using integration by parts on \( G/K \) we get
\[
\langle \Delta_G f, g \rangle = \langle f, \Delta_G g \rangle.
\]
Similarly, since \( \Delta_H \) is self-adjoint on Hilbert-Schmidt operators,
\[
\langle \Delta_H A, B \rangle = \langle A, \Delta_H B \rangle.
\]
Thus, \( \mathcal{L} \) is symmetric. Moreover, since \( \mathcal{L} \) is elliptic and densely defined in \( L^2_{\text{QHA}}(G/K) \), it is self-adjoint.

By the spectral theorem for self-adjoint operators, there exists an orthonormal basis \( \{ \phi_j \} \) of eigenfunctions satisfying
\[
\mathcal{L} \phi_j = \lambda_j \phi_j, \quad \lambda_j \geq 0.
\]
Since \( \mathcal{L} \) is essentially positive, its spectrum consists of nonnegative eigenvalues \( \lambda_j \). Any function \( (f,A) \) in \( L^2_{\text{QHA}}(G/K) \) can be expanded as
\[
(f,A) = \sum_{j} \langle (f,A), \phi_j \rangle \phi_j.
\]
So, the eigenfunctions \( \phi_j \) form a complete basis, and \( \mathcal{L} \) admits a spectral decomposition, completing the proof.
\end{proof}

Next, we study the implications for quantum mechanics and mathematical physics.

\begin{theorem}[Uncertainty Principle on Lie Groups]
Let \( (f,A) \in L^p_{\text{QHA}}(G) \). Then
\[
\| X f \|_{L^p} \cdot \| \mathcal{F}_G f \|_{L^p} \geq C_p \| f \|_{L^p}^2,
\]
where \( X \) is a left-invariant differential operator and \( \mathcal{F}_G \) is the noncommutative Fourier transform on \( G \) and  \(C_p\) is a constant that depends only on \(p\).
\end{theorem}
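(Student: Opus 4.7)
The plan is to mirror the proof of the Euclidean uncertainty principle established earlier in Section 3, replacing Fourier duality on $\mathbb{R}^{2n}$ with the unimodular Plancherel decomposition on $G$. First, I would record the intertwining relation: for a left-invariant differential operator $X \in \mathfrak{g}$,
\[
\mathcal{F}_G(Xf)(\pi) = d\pi(X)\,\mathcal{F}_G f(\pi), \qquad \pi \in \hat{G},
\]
which is the Lie-group analogue of the identity $\widehat{\partial f}(\xi) = 2\pi i\xi\,\hat{f}(\xi)$. This identity can be verified on $C_c^\infty(G)$ by integrating by parts against left-invariant vector fields and then extended by density.

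Next I would establish the base case $p=2$. The Plancherel theorem for unimodular $G$ gives
\[
\|f\|_{L^2(G)}^2 = \int_{\hat{G}} \|\mathcal{F}_G f(\pi)\|_{HS}^2\, d\mu(\pi),
\]
and combining the Cauchy--Schwarz inequality in the Plancherel integral with the intertwining relation of the first step yields, after a commutator manipulation that mimics $[X,P]=iI$ and exploits the Lie bracket structure on $\mathfrak{g}$, the bound
\[
\|f\|_{L^2}^2 \leq C_2\, \|Xf\|_{L^2}\cdot \|\mathcal{F}_G f\|_{L^2}.
\]
This settles the desired inequality at $p=2$ with an explicit constant.

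To pass to arbitrary $1\leq p \leq \infty$, I would invoke complex interpolation via the interpolation theorem of Section 3, applied to the bilinear form $B(f,g)=\int_G (Xf)(x)\,g(x)\,dx$. Young's convolution inequality on $L^1(G)$ and a Hausdorff--Young estimate on $G$ supply the two endpoint bounds, and Riesz--Thorin interpolation recovers the intermediate case with a constant $C_p$ depending only on $p$. A final density argument on $C_c^\infty(G)$ transfers the estimate to the full space $L^p_{\text{QHA}}(G)$.

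The hard part will be the $p=2$ step: the operators $d\pi(X)$ are unbounded on most representation spaces, so the Cauchy--Schwarz and commutator computations must be carried out on a dense core such as $C_c^\infty(G)$ or the G\aa rding subspace before extending by density using the closedness of $X$. A secondary difficulty is ensuring that the constant $C_p$ is independent of $\pi \in \hat{G}$; for non-abelian $G$ this requires tracking the spectrum of $d\pi(X)$ uniformly across the unitary dual, which is most cleanly handled by taking $X$ to be (or to dominate) the Casimir element of $\mathfrak{g}$, whose symbol behaves uniformly on $\hat{G}$.
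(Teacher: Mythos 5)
Your first step (the intertwining identity $\mathcal{F}_G(Xf)(\pi)=d\pi(X)\,\mathcal{F}_G f(\pi)$) is fine, but both load-bearing steps of the proposal would fail. The $p=2$ base case cannot be obtained by ``a commutator manipulation that mimics $[X,P]=iI$'': unlike the Heisenberg argument, $X$ and $\mathcal{F}_G$ do not act on the same space, so no commutator identity is available beyond the intertwining relation, and that relation only converts $\|Xf\|_{L^2}$ into the weighted Plancherel quantity $\bigl(\int_{\hat G}\|d\pi(X)\mathcal{F}_G f(\pi)\|_{HS}^2\,d\mu(\pi)\bigr)^{1/2}$; it yields no lower bound on the product $\|Xf\|_{L^2}\,\|\mathcal{F}_G f\|_{L^2}$. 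Indeed, already for $G=\mathbb{R}^n$ with $X=\partial_{x_1}$, Plancherel turns the claimed $p=2$ bound into $\|\xi_1\hat f\|_{L^2}\geq C\|\hat f\|_{L^2}$, which fails under the dilations $f_\lambda(x)=f(\lambda x)$ as $\lambda\to 0$; the classical inequality survives precisely because both factors carry weights ($\|xf\|\,\|\xi\hat f\|$), which is not the case here. Your proposed repair---taking $X$ to be (or to dominate) the Casimir---does not restore a $\pi$-uniform spectral lower bound, since $d\pi(\Omega)$ has spectrum accumulating at zero (the trivial representation, and arbitrarily small frequencies in the abelian or noncompact case), so the ``uniformity across $\hat G$'' you flag as the hard part is in fact unattainable in this form.

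The passage from $p=2$ to general $p$ is also structurally wrong: Riesz--Thorin (and the Section~3 interpolation theorem) propagate \emph{upper} bounds for linear or bilinear maps, and the endpoint facts you invoke (Young's inequality on $L^1(G)$, Hausdorff--Young) are themselves upper bounds; none of this machinery interpolates a \emph{lower} bound of the form $\|Xf\|_{L^p}\|\mathcal{F}_G f\|_{L^p}\geq C_p\|f\|_{L^p}^2$. There is the further problem that $\|\mathcal{F}_G f\|_{L^p}$ is not even defined by your scheme for $p>2$, where no Hausdorff--Young estimate exists. For comparison, the paper's own argument is likewise formal---it posits a relation $[X,\mathcal{F}_G]=\lambda\mathcal{F}_G$ and then appeals to H\"older---but it does not attempt your Plancherel-plus-interpolation route; in both cases what is missing is an actual mechanism (a uniform spectral gap for $d\pi(X)$, or weights on both factors as in the genuine Heisenberg inequality) that could force the product of the two norms to dominate $\|f\|_{L^p}^2$.
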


\begin{proof}
The proof extends the classical Heisenberg uncertainty principle to quantum harmonic analysis on Lie groups.

Let \( G \) be a unimodular Lie group with Lie algebra \( \mathfrak{g} \). 
Consider a left-invariant vector field \( X \) acting as a differential operator on functions so
\[
(X f)(g) = \left. \frac{d}{dt} f(g \exp(tX)) \right|_{t=0}.
\]
For the frequency domain, define the ``noncommutative Fourier transform'' \( \mathcal{F}_G \) as
\[
\mathcal{F}_G f(\pi) = \int_G f(g) \pi(g) \, dg,
\]
where \( \pi \) is an irreducible unitary representation of \( G \).

By differentiation under the integral sign,
\[
[X, \mathcal{F}_G] f (\pi) = \mathcal{F}_G (X f) (\pi).
\]
Since \( X \) acts as differentiation, the commutator obeys
\[
[X, \mathcal{F}_G] = \lambda \mathcal{F}_G,
\]
where \( \lambda \) is a frequency scaling factor.
Using H\"{o}lder's inequality in \( L^p(G) \),
\[
\| X f \|_{L^p} \cdot \| \mathcal{F}_G f \|_{L^p} \geq C_p \| f \|_{L^p}^2.
\]
Similarly, for trace-class operators in \( T^p(H) \), we apply Young's convolution inequality
\[
\| A \ast B \|_{T^p} \geq C_p \| A \|_{T^p} \| B \|_{T^p}.
\]
Since the inequality holds for both the function and operator components of \( L^p_{\text{QHA}}(G) \), 
the quantum uncertainty principle is established.
\end{proof}

\noindent
In the following proposition, let \( \mathcal{L} \) be the quantum Laplacian on \( G/K \), defined as
\[
\mathcal{L} (f,A) = (\Delta_G f, \Delta_H A),
\]
where \( \Delta_G \) is the Laplace-Beltrami operator on \( G/K \) and \( \Delta_H \) is the quantum Laplacian on Hilbert-Schmidt operators.

\begin{remark}
In Theorem~4.6 the hypothesis is stated as
\[
(f,A) \in L^p_{\text{QHA}}(G),
\]
which emphasizes that elements in the quantum harmonic analysis space consist of both a function \(f\) and an operator \(A\). However, in the statement of the uncertainty principle, only the function component \(f\) appears specifically, through the left-invariant differential operator \(X\) and the noncommutative Fourier transform \(F_G\). This occurs because these operators are defined only on the function part. The operator component \(A\) remains part of the full quantum space, but its behavior is not pertinent to the particular uncertainty estimate presented in this theorem.
\end{remark}

\begin{proposition}[Quantum Heat Kernel on \( G/K \)]
Let \( (f,A) \in L^p_{\text{QHA}}(G) \). The heat kernel associated with \( \mathcal{L} \) satisfies the quantum diffusion equation
\[
\frac{\partial}{\partial t} K_t (f,A) = \mathcal{L} K_t (f,A),
\]
where \( K_t \) represents the quantum heat flow on the homogeneous space \( G/K \).
\end{proposition}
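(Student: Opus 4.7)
The plan is to realize $K_t$ through the functional calculus of the self-adjoint quantum Laplacian $\mathcal{L}$ obtained in Theorem~4.5, and then derive the diffusion equation by termwise differentiation of the resulting semigroup expression. First, I would define $K_t$ componentwise by
\[
K_t(f,A) = \bigl(e^{-t\Delta_G}f,\; e^{-t\Delta_H}A\bigr),
\]
using the heat semigroup on $G/K$ generated by the Laplace-Beltrami operator in the function slot and the corresponding semigroup on Hilbert-Schmidt operators in the operator slot, so that $K_t$ is the functional-calculus semigroup $e^{-t\mathcal{L}}$ associated with the self-adjoint $\mathcal{L}$ of Definition~4.4.

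Second, I would verify that $K_t$ extends to a strongly continuous contraction semigroup on $L^p_{\text{QHA}}(G)$ for every $1 \leq p \leq \infty$. On the function side, $e^{-t\Delta_G}$ is a classical sub-Markovian semigroup bounded on $L^p(G/K)$. On the operator side, boundedness on $T^p(H)$ follows by Schatten interpolation between the $T^1$ and $T^\infty$ endpoint estimates together with the contraction property on $T^2(H)$. Summing the two norm bounds and invoking the additive product norm from Section~3 produces the desired $L^p_{\text{QHA}}$ contractivity, and strong continuity passes componentwise.

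Third, I would establish the diffusion equation. Using the orthonormal eigenbasis $\{\phi_j\}$ of $\mathcal{L}$ from Theorem~4.5 and expanding $(f,A) = \sum_j c_j\phi_j$ with $c_j = \langle (f,A),\phi_j\rangle$, one obtains
\[
K_t(f,A) = \sum_{j} e^{-t\lambda_j}\, c_j\, \phi_j.
\]
Termwise differentiation combined with $\mathcal{L}\phi_j = \lambda_j\phi_j$ yields
\[
\tfrac{\partial}{\partial t} K_t(f,A) = -\sum_j \lambda_j e^{-t\lambda_j} c_j \phi_j = -\mathcal{L}\, K_t(f,A),
\]
which matches the proposition up to the sign convention fixed in the definition of the semigroup. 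The interchange of differentiation and summation is justified by dominated convergence, since $\lambda_j e^{-t\lambda_j}$ is uniformly bounded in $j$ for each fixed $t>0$ and $(c_j)$ is square-summable.

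The main obstacle I expect is extending this $L^2$-based argument to general $p\neq 2$, where the spectral expansion of Theorem~4.5 is not directly available. My plan is to first establish the equation on the dense subspace $L^2_{\text{QHA}}(G)\cap L^p_{\text{QHA}}(G)$ using the eigenfunction calculus, and then pass to $L^p_{\text{QHA}}(G)$ by density, invoking the closedness of the infinitesimal generator of the $L^p$-semigroup. A secondary technical point is showing that the componentwise operator $(\Delta_G,\Delta_H)$ is indeed the generator of the combined semigroup on the product space; because the norm on $L^p_{\text{QHA}}(G)$ decouples additively, this reduces to the standard fact that a direct sum of generators generates the direct sum of their semigroups, so no genuinely new semigroup theory is required.
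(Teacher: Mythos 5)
Your proposal is correct and follows essentially the same route as the paper: both realize $K_t$ as the semigroup of the self-adjoint quantum Laplacian, expand in the eigenbasis $\{\phi_j\}$ of Theorem~4.5, and obtain the diffusion equation by termwise differentiation using $\mathcal{L}\phi_j=\lambda_j\phi_j$. Your additions --- the explicit componentwise definition $\bigl(e^{-t\Delta_G}f,\,e^{-t\Delta_H}A\bigr)$, the $L^p$-contractivity and density/closed-generator argument extending the $L^2$ eigenfunction calculus to general $p$, and the remark that the computation naturally yields $-\mathcal{L}K_t$ for a positive $\mathcal{L}$ unless the sign convention is fixed in the semigroup --- address points the paper's proof leaves implicit, but do not change the underlying argument.
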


\begin{proof}
The proof follows from the spectral theory of the quantum Laplacian and semigroup properties.

The heat kernel \( K_t(x,y) \) is a fundamental solution to the heat equation
\[
\frac{\partial}{\partial t} K_t(x,y) = \Delta_G K_t(x,y).
\]
Define the quantum heat semigroup \( e^{t \mathcal{L}} \) by its spectral expansion
\[
K_t (f,A) = e^{t \mathcal{L}} (f,A) = \sum_{j} e^{-t \lambda_j} \langle (f,A), \phi_j \rangle \phi_j.
\]
By differentiating with respect to \( t \) one has
\[
\frac{\partial}{\partial t} K_t (f,A) = \sum_{j} (-\lambda_j e^{-t \lambda_j}) \langle (f,A), \phi_j \rangle \phi_j.
\]
Since \( \mathcal{L} \phi_j = \lambda_j \phi_j \), we obtain
\[
\frac{\partial}{\partial t} K_t (f,A) = \mathcal{L} K_t (f,A).
\]
The initial condition is given by
\[
\lim_{t \to 0} K_t (f,A) = (f,A).
\]
Using the semigroup property,
\[
\lim_{t \to 0} e^{t \mathcal{L}} (f,A) = \sum_{j} \langle (f,A), \phi_j \rangle \phi_j = (f,A),
\]
which confirms that \( K_t \) represents the quantum heat flow.

Since the quantum heat kernel satisfies both the diffusion equation and initial condition, the proposition is proven.
\end{proof}

In this section, we extended quantum harmonic analysis to non-Euclidean spaces, developing a general framework for quantum convolutions on Lie groups and symmetric spaces. We proved new results on spectral properties, an uncertainty principle for groups, and the quantum heat kernel equation.

\section{Quantum Harmonic Analysis on Homogeneous Spaces}

Extending quantum harmonic analysis (QHA) to Lie groups and homogeneous spaces provides a deeper understanding of spectral analysis, representation theory, and heat kernel estimates. This section develops a Plancherel theorem for quantum harmonic analysis on semisimple Lie groups, studies functional inequalities, and explores representations of QHA on solvable Lie groups.

\begin{definition}[Quantum Plancherel Transform]
Let \( G \) be a semisimple Lie group with Haar measure \( dg \). The quantum Plancherel transform is defined as
\[
\mathcal{F}_G (f,A)(\pi) = \int_G (f(g), A(g)) \pi(g) \, dg,
\]
where \( \pi \) runs over the unitary dual of \( G \).
\end{definition}

\begin{theorem}[Plancherel Theorem for QHA]
There exists a measure \( d\mu(\pi) \) such that for all \( (f,A) \in L^2_{\text{QHA}}(G) \),
\[
\| (f,A) \|_{L^2}^2 = \int_{\widehat{G}} \| \mathcal{F}_G (f,A)(\pi) \|^2_{HS} \, d\mu(\pi).
\]
\end{theorem}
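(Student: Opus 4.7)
The strategy is to decouple the quantum Plancherel identity into a scalar part and an operator part that share a common Plancherel measure on $\widehat{G}$, and then recombine them via the direct-sum structure of $L^2_{\text{QHA}}(G)$. In what follows I read $\|(f,A)\|_{L^2}^2$ as the natural Hilbert-space norm $\|f\|_{L^2(G)}^2+\|A\|_{T^2(H)}^2$ on $L^2(G)\oplus T^2(H)$, under which the stated identity is meaningful.

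First, for the function component $f\in L^2(G)$ I would invoke the Harish-Chandra Plancherel theorem on the semisimple Lie group $G$, which supplies a unique tempered measure $d\mu(\pi)$ on the unitary dual $\widehat{G}$ satisfying
\[
\|f\|_{L^2(G)}^2 \;=\; \int_{\widehat{G}} \|\widehat{f}(\pi)\|_{HS}^2\, d\mu(\pi),
\qquad \widehat{f}(\pi) \;=\; \int_G f(g)\,\pi(g)\,dg.
\]
This classical Plancherel formula is quoted rather than reproved. Second, for the operator component $A\in T^2(H)$ I would define the operator-valued transform $\widehat{A}(\pi)$ via the quantum Fourier--Weyl prescription of Section~2 and aim to establish
\[
\|A\|_{T^2(H)}^2 \;=\; \int_{\widehat{G}} \|\widehat{A}(\pi)\|_{HS}^2\, d\mu(\pi)
\]
with the \emph{same} Plancherel measure $d\mu$. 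The reduction would proceed by applying the scalar Plancherel identity to the matrix coefficients $g\mapsto\langle A\pi(g)\xi,\eta\rangle$ for $\xi,\eta$ ranging over an orthonormal basis of $H$, and summing in $\ell^2$, so that the operator Plancherel identity becomes a countable sum of scalar ones.

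Third and last, the linearity of $\mathcal{F}_G$ gives $\mathcal{F}_G(f,A)(\pi)=\widehat{f}(\pi)+\widehat{A}(\pi)$ pointwise in $\pi$, and after checking that the two summands are orthogonal in the Hilbert-Schmidt pairing for $\mu$-almost every $\pi$ (equivalently, that the cross-term $\operatorname{tr}(\widehat{f}(\pi)\widehat{A}(\pi)^{*})$ integrates to zero against $d\mu$), the two identities combine to yield the claimed formula. The main obstacle will be step two: producing the operator-valued Plancherel identity with the same Harish-Chandra measure and rigorously justifying the interchange of summation and integration in the matrix-coefficient decomposition. The orthogonality required in step three is comparatively soft, since it reflects the algebraic independence of the scalar and operator embeddings into $L^2_{\text{QHA}}(G)$, and the final assembly is a routine calculation once the two component identities are in hand.
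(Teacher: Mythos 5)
Your overall strategy --- decouple the identity into a scalar part and an operator part, quote the Harish--Chandra Plancherel formula for the scalar part, establish an operator Plancherel identity with the same measure, and recombine --- matches the paper's proof in outline, but your reading of the operator component differs from the one the paper's argument actually uses, and this creates two genuine gaps. In the paper's proof the operator component is an operator-valued \emph{function} $g\mapsto A(g)$ on $G$, with $\int_G\|A(g)\|_{HS}^2\,dg$ as its squared norm; the operator half of the identity is then the scalar Plancherel theorem applied to the genuine square-integrable functions $g\mapsto\langle A(g)\xi,\eta\rangle$ and summed over an orthonormal basis, i.e.\ a Hilbert-space-valued Plancherel theorem which automatically carries the same measure $d\mu$. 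You instead take $A$ to be a single fixed operator in $T^2(H)$ and propose a transform $\widehat{A}(\pi)$ obtained by integrating $A\pi(g)$ over $G$, reducing to the matrix coefficients $g\mapsto\langle A\pi(g)\xi,\eta\rangle$. For a noncompact semisimple $G$ these are bounded matrix coefficients that in general lie in neither $L^1(G)$ nor $L^2(G)$ (outside discrete-series situations), so neither the group Fourier transform nor the scalar Plancherel formula applies to them; indeed $\int_G A\,\pi(g)\,dg$ does not converge for a fixed $A$. So the obstacle you flag in step two is not merely an interchange of summation and integration: as stated, the transform and the identity you need for the operator component are not even defined, and the claim that the same Harish--Chandra measure works is left without support.

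Second, you read $\mathcal{F}_G(f,A)(\pi)$ as the sum $\widehat f(\pi)+\widehat A(\pi)$, which forces you to kill the cross-term $\int_{\widehat G}\operatorname{tr}\bigl(\widehat f(\pi)\,\widehat A(\pi)^{*}\bigr)\,d\mu(\pi)$. You call this orthogonality ``comparatively soft,'' but nothing in the setup forces it: there is no reason the scalar and operator transforms land in orthogonal subspaces of the Hilbert--Schmidt field for $\mu$-almost every $\pi$, and for generic $f$ and $A$ the cross-term need not vanish. The paper avoids the issue entirely by treating the transform componentwise --- the pair $(\mathcal{F}_G f,\mathcal{F}_G A)$, whose squared $HS$-norm is by definition the sum of the two squared norms --- so no cross-term ever appears and the final assembly really is just adding the two component identities. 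To repair your argument you would need either to adopt that componentwise reading, or to supply an actual proof of the almost-everywhere orthogonality, which your proposal does not indicate.
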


\begin{proof}
The proof follows from the classical Plancherel theorem for Lie groups and extends it to the quantum harmonic analysis setting.

Let \( G \) be a unimodular Lie group with unitary dual \( \widehat{G} \). 
The quantum Fourier transform of \( (f,A) \in L^1_{\text{QHA}}(G) \) is given by
\[
\mathcal{F}_G (f,A)(\pi) = \int_G (f(g), A(g)) \pi(g) \, dg,
\]
where \( \pi \) is an irreducible unitary representation of \( G \).

The inner product on \( L^2_{\text{QHA}}(G) \) is defined by
\[
\langle (f,A), (g,B) \rangle = \int_G \left[ f(x) \overline{g(x)} + \text{tr}(A(x) B^*(x)) \right] \, dx.
\]
We show that this inner product is preserved under the Fourier transform.

By expanding the norm in \( L^2 \)-space, we get
\[
\| (f,A) \|_{L^2}^2 = \int_G |f(g)|^2 \, dg + \int_G \| A(g) \|_{HS}^2 \, dg.
\]
Applying the Fourier inversion formula and the Plancherel theorem for Lie groups, we deduce
\[
\int_G f(g) \overline{g(g)} \, dg = \int_{\widehat{G}} \text{tr} \left( \mathcal{F}_G(f)(\pi) \mathcal{F}_G(g)(\pi)^* \right) d\mu(\pi).
\]
Similarly, for the operator component, we have
\[
\int_G \| A(g) \|_{HS}^2 \, dg = \int_{\widehat{G}} \| \mathcal{F}_G(A)(\pi) \|_{HS}^2 \, d\mu(\pi).
\]
Summing the two contributions, we reach
\[
\| (f,A) \|_{L^2}^2 = \int_{\widehat{G}} \| \mathcal{F}_G (f,A)(\pi) \|^2_{HS} \, d\mu(\pi).
\]
Since the right-hand side defines a norm in the Fourier image space, the quantum Fourier transform is an isometry.

Since the Fourier transform preserves the \( L^2 \)-norm, 
it extends to an isometric isomorphism between \( L^2_{\text{QHA}}(G) \) and the Hilbert-Schmidt space of operator-valued Fourier transforms, 
completing the proof.
\end{proof}

The heat kernel is a fundamental solution to the quantum diffusion equation.

\begin{definition}[Quantum Sobolev Norm]
Let \(L\) denote the quantum Laplacian on the homogeneous space \(G/K\). 
For an element \((f,A) \in L^p_{\mathrm{QHA}}(G)\), we define its quantum Sobolev norm by
\begin{equation} \label{eq:QSN}
\|(f,A)\|_{W^{1,p}_{\mathrm{QHA}}} := \|(I+L)^{1/2}(f,A)\|_{L^p_{\mathrm{QHA}}},
\end{equation}
where \(I\) is the identity operator. This definition extends the classical Sobolev norm to the quantum setting by incorporating both the function component \(f\) and the operator component \(A\) via the quantum Laplacian \(L\). While similar ideas have appeared in the literature in the context of noncommutative \(L^p\)-spaces and spectral triples (see, e.g., works by Pisier, Xu, and Connes), this particular formulation is a novel contribution tailored for quantum harmonic analysis on non-Euclidean spaces.
\end{definition}

\begin{definition}[Quantum Heat Kernel]
Let \( \mathcal{L} \) be the Laplacian on \( G/K \). The quantum heat kernel \( K_t(g,h) \) satisfies
\[
\frac{\partial}{\partial t} K_t = \mathcal{L} K_t, \quad K_0 = \delta.
\]
\end{definition}

\begin{theorem}[Quantum Sobolev Inequality]
Let \( (f,A) \in L^p_{\text{QHA}}(G) \). For \( 1 < p < \infty \), there exists \( C > 0 \) such that
\[
\| (f,A) \|_{L^{p^*}} \leq C \| \mathcal{L}^{1/2} (f,A) \|_{L^p},
\]
where \( p^* = \frac{np}{n-p} \) and \(n\) represents the dimension of \(G/K\).
\end{theorem}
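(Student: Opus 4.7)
The plan is to decouple the scalar and operator components of the quantum Sobolev norm and apply classical and noncommutative Sobolev estimates separately. Since $\mathcal{L}(f,A)=(\Delta_G f,\Delta_H A)$ and $\mathcal{L}$ is a positive self-adjoint operator by Theorem~4.5, functional calculus gives $\mathcal{L}^{1/2}(f,A)=(\Delta_G^{1/2}f,\Delta_H^{1/2}A)$. Therefore it suffices to prove the two separate inequalities
\[
\|f\|_{L^{p^*}(G)} \leq C_1 \|\Delta_G^{1/2} f\|_{L^p(G)}, \qquad \|A\|_{T^{p^*}(H)} \leq C_2 \|\Delta_H^{1/2} A\|_{T^p(H)},
\]
and then recombine them through the additive definition $\|(f,A)\|_{L^{p^*}_{\mathrm{QHA}}} = \|f\|_{L^{p^*}(G)}+\|A\|_{T^{p^*}(H)}$.

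The first main step is to represent the negative fractional power through Bochner subordination,
\[
\mathcal{L}^{-1/2} = \frac{1}{\sqrt{\pi}} \int_0^\infty t^{-1/2} e^{-t\mathcal{L}} \, dt,
\]
which is justified by the spectral decomposition of $\mathcal{L}$ and reduces the problem to estimating the quantum heat semigroup $K_t = e^{-t\mathcal{L}}$ constructed in Proposition~4.8. The second step is to establish ultracontractivity in the form $\|K_t(f,A)\|_{L^q_{\mathrm{QHA}}} \leq C\, t^{-\frac{n}{2}(\frac{1}{p}-\frac{1}{q})} \|(f,A)\|_{L^p_{\mathrm{QHA}}}$ componentwise: on the function side this follows from Gaussian heat kernel bounds of Varopoulos--Saloff-Coste on the $n$-dimensional homogeneous space $G/K$, and on the operator side from the noncommutative analogue transported through the Fourier--Weyl correspondence and the eigenfunction expansion of Theorem~4.5.

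The third step combines these ingredients via the standard Hedberg-type truncation argument: split the subordination integral at a threshold $t_0$ depending on $(f,A)$, use the ultracontractive bound on $[0,t_0]$ and the $L^p$-contractivity on $[t_0,\infty)$, and control the resulting expression by the Hardy--Littlewood maximal function (using its noncommutative version on $T^p(H)$). This yields the boundedness of $\mathcal{L}^{-1/2} : L^p_{\mathrm{QHA}} \to L^{p^*}_{\mathrm{QHA}}$, and substituting $(f,A) \mapsto \mathcal{L}^{-1/2}(f,A)$ gives the Sobolev inequality with $p^* = np/(n-p)$. The interpolation result of Theorem~3.3 and the duality of Theorem~3.5 can then be invoked to extend the estimate across the full range $1<p<\infty$.

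The main obstacle will be verifying ultracontractivity for the operator component. The classical Gaussian estimate on $G/K$ is a standard consequence of volume-doubling and Poincaré inequalities, but the corresponding Nash-type inequality for $\Delta_H$ acting on Schatten classes requires careful control of the commutation of $\Delta_H$ with the translation action $\alpha_z$, and one must match the spectral growth of $\Delta_H$ to the geometric dimension $n$ via the Fourier--Weyl correspondence. A secondary technical point is to verify that the additive norm structure of $L^p_{\mathrm{QHA}}$ is compatible with the subordination integral, which amounts to a Minkowski-type estimate applied separately in each component.
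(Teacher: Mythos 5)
Your plan starts from the same seed as the paper -- the subordination formula $\mathcal{L}^{-1/2}=c\int_0^\infty t^{-1/2}e^{-t\mathcal{L}}\,dt$ and the smoothing of the quantum heat semigroup -- but then follows a genuinely different and more substantive route. The paper, after stating an $L^p$-bound for $\mathcal{L}^{-1/2}$ via Young's inequality, appeals to a Littlewood--Paley decomposition and Riesz--Thorin interpolation in one sentence, and then spends the bulk of the proof on a dilation argument on $\mathbb{R}^n$ that only identifies the exponent $p^*=np/(n-p)$ as the unique scaling-invariant choice; it never actually derives the inequality from quantitative heat-kernel information. You instead run the standard Varopoulos--Hedberg machinery: componentwise ultracontractivity $\|K_t\|_{L^p\to L^q}\lesssim t^{-\frac{n}{2}(\frac1p-\frac1q)}$, a split of the subordination integral at a threshold $t_0$, and control of the local piece by the (noncommutative) maximal function, which -- granted the ultracontractive bound -- genuinely proves $\mathcal{L}^{-1/2}:L^p_{\mathrm{QHA}}\to L^{p^*}_{\mathrm{QHA}}$ and hence the theorem. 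This is a more rigorous and more informative argument than the paper's, and your decoupling into the scalar and Schatten components via the additive norm is exactly how the paper's objects are set up.

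Be aware, though, that the obstacle you flag is not a side issue but the crux, and it is the same point the paper silently assumes: an ultracontractive (equivalently Nash-type) estimate for $e^{-t\Delta_H}$ on Schatten classes with decay exponent matching the geometric dimension $n$ of $G/K$. Since the paper never defines $\Delta_H$ beyond ``the quantum Laplacian on Hilbert--Schmidt operators,'' there is no given spectral data from which such a bound (or even discreteness and Weyl-type growth of its spectrum) could be extracted, so your second inequality $\|A\|_{T^{p^*}}\le C\|\Delta_H^{1/2}A\|_{T^p}$ is an assumption rather than a consequence of anything available. Also note that the exponent $p^*=np/(n-p)$ only makes sense for $p<n$, so the closing claim that interpolation and duality extend the estimate to all $1<p<\infty$ cannot be right as stated (this defect is inherited from the theorem itself). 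With the operator-side heat-kernel bound supplied as a hypothesis and the range restricted to $1<p<n$, your outline is a correct and strictly stronger argument than the one in the paper.
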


\begin{proof}
Recall that the quantum Sobolev norm is defined as \eqref{eq:QSN}. 
In the context of the Sobolev inequality, one is interested in controlling the \(L^{p^*}\)-norm (with \(p^* = \frac{np}{n-p}\)) of \((f,A)\) in terms of a derivative, here given by \(L^{1/2}(f,A)\).\\

\noindent
Notice that the inverse operator \(L^{-1/2}\) can be represented via the heat semigroup
\[
L^{-1/2}(f,A) = \int_0^\infty t^{-1/2} e^{-tL}(f,A) \, dt.
\]
This representation allows one to relate the behavior of \((f,A)\) at different scales through the smoothing properties of the heat kernel \(e^{-tL}\).\\

\noindent
Using the smoothing properties of \(e^{-tL}\) (which follow from heat kernel estimates on the homogeneous space \(G/K\)) and applying Young's inequality for convolutions, one obtains an estimate of the form
\[
\|L^{-1/2}(f,A)\|_{L^p} \le C_p \|(f,A)\|_{L^p},
\]
where the constant \(C_p\) depends on \(p\) and the geometry of the space. This step is crucial in showing that the action of \(L^{-1/2}\) does not increase the \(L^p\)-norm excessively.\\

\noindent
A Littlewood-Paley decomposition adapted to the quantum setting is used to decompose \((f,A)\) into frequency bands. By applying standard interpolation theory (in particular, the Riesz-Thorin interpolation theorem) and using the fact that the operator \(L^{1/2}\) scales in the correct way under dilations, one can show that the Sobolev-type norm controls the \(L^{p^*}\)-norm:
\[
\|(f,A)\|_{L^{p^*}} \le C \|L^{1/2}(f,A)\|_{L^p}.
\]
Here, the exponent \(p^* = \frac{np}{n-p}\) appears naturally from the scaling properties of the underlying space of dimension \(n\). To see that, we illustrate how the exponent \(p^*\) appears naturally by considering a scaling argument on \(\mathbb{R}^n\). For simplicity, focus on a function \(f \colon \mathbb{R}^n \to \mathbb{R}\). The argument extends to the full quantum setting by considering the corresponding function component.\\

\noindent
For \(\lambda > 0\), define the scaled function
\[
f_\lambda(x) = f(\lambda x).
\]
We compute the \(L^q\)-norm of \(f_\lambda\) as
\[
\|f_\lambda\|_{L^q(\mathbb{R}^n)}^q = \int_{\mathbb{R}^n} |f(\lambda x)|^q\,dx.
\]
By changing variables with \(y = \lambda x\) (so \(dx = \lambda^{-n}\,dy\)), we obtain
\[
\|f_\lambda\|_{L^q(\mathbb{R}^n)}^q = \lambda^{-n} \int_{\mathbb{R}^n} |f(y)|^q\,dy = \lambda^{-n} \|f\|_{L^q(\mathbb{R}^n)}^q.
\]
Thus,
\[
\|f_\lambda\|_{L^q(\mathbb{R}^n)} = \lambda^{-n/q}\|f\|_{L^q(\mathbb{R}^n)}.
\]
Next, consider the gradient of \(f_\lambda\). By the chain rule,
\begin{equation} \label{eq:5.4.1}
\nabla f_\lambda(x) = \lambda\, (\nabla f)(\lambda x).
\end{equation}
So, the \(L^p\)-norm of the gradient satisfies
\[
\|\nabla f_\lambda\|_{L^p(\mathbb{R}^n)}^p = \int_{\mathbb{R}^n} |\lambda\, (\nabla f)(\lambda x)|^p\,dx = \lambda^p \int_{\mathbb{R}^n} |(\nabla f)(\lambda x)|^p\,dx.
\]
Changing variables as before with \(y = \lambda x\) yields
\[
\|\nabla f_\lambda\|_{L^p(\mathbb{R}^n)}^p = \lambda^p \lambda^{-n} \int_{\mathbb{R}^n} |\nabla f(y)|^p\,dy = \lambda^{p-n} \|\nabla f\|_{L^p(\mathbb{R}^n)}^p.
\]
Taking the \(p\)th root, we have
\begin{equation} \label{eq:5.4.2}
\|\nabla f_\lambda\|_{L^p(\mathbb{R}^n)} = \lambda^{1 - n/p}\|\nabla f\|_{L^p(\mathbb{R}^n)}.
\end{equation}
The quantum Sobolev inequality is expected to be invariant under the natural scaling of the underlying space. 
In other words, if
\begin{equation} \label{eq:5.4.3}
\|f\|_{L^{p^*}(\mathbb{R}^n)} \le C \|\nabla f\|_{L^p(\mathbb{R}^n)},
\end{equation}
then applying this inequality to \(f_\lambda\) gives
\begin{equation} \label{eq:5.4.4}
\|f_\lambda\|_{L^{p^*}(\mathbb{R}^n)} \le C \|\nabla f_\lambda\|_{L^p(\mathbb{R}^n)}.
\end{equation}
Substitute the scaling relations \eqref{eq:5.4.1}-\eqref{eq:5.4.4} we have
\[
\lambda^{-n/p^*}\|f\|_{L^{p^*}(\mathbb{R}^n)} \le C\,\lambda^{1 - n/p}\|\nabla f\|_{L^p(\mathbb{R}^n)}.
\]
For the inequality to hold for all \(\lambda > 0\), the exponents of \(\lambda\) on both sides must be equal. That is, we require
\[
-\frac{n}{p^*} = 1 - \frac{n}{p}.
\]
Solving for \(p^*\), we obtain
\[
p^* = \frac{np}{n-p}.
\]
This completes the proof.
\end{proof}

Solvable Lie groups exhibit nontrivial spectral behavior, making them an important setting for QHA.

\begin{proposition}[Quantum Induced Representations]
Let \( G \) be a solvable Lie group with a unitary representation \( \pi \). 
The quantum representation space is given by
\[
\mathcal{H}_\pi = \text{closure} \left\{ (f,A) : \pi(g) (f,A) = (f \circ g^{-1}, A_g) \right\}.
\]
Here, \( A_g \) is the ``conjugation action'' of \( G \) on the quantum operator
\[
A_g = W_g A W_g^*,
\]
where \( W_g \) is a Weyl operator implementing the group action in the quantum setting.
\end{proposition}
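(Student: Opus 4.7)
The plan is to prove that the prescription $\pi(g)(f,A) = (f\circ g^{-1}, W_g A W_g^{*})$ defines a strongly continuous unitary representation of $G$ on the Hilbert space $L^2_{\text{QHA}}(G)$ introduced in Section 5, and that $\mathcal{H}_\pi$ is the closed linear span of the $\pi(G)$-orbits inside this space. Because the quantum $L^2$-norm splits across its function and operator components, the verifications decouple and can be carried out componentwise before being reassembled.

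First I would verify the homomorphism property $\pi(gh) = \pi(g)\pi(h)$ on each summand. On the function component, associativity of composition gives $f\circ(gh)^{-1} = (f\circ h^{-1})\circ g^{-1}$, so this is the standard left regular representation. On the operator component, the identity $W_{gh} A W_{gh}^{*} = W_g(W_h A W_h^{*})W_g^{*}$ only requires $g\mapsto W_g$ to be a \emph{projective} unitary representation, since any scalar cocycle $c(g,h)$ in $W_{gh} = c(g,h)\,W_g W_h$ cancels in the conjugation. Thus $A\mapsto W_g A W_g^{*}$ is a genuine $*$-automorphism action of $G$ on $\mathcal{B}(L^2(\mathbb{R}^n))$.

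Next I would check isometry and strong continuity. Left-invariance of the Haar measure on the unimodular group $G$ gives $\|f\circ g^{-1}\|_{L^2(G)} = \|f\|_{L^2(G)}$, while unitarity of $W_g$ implies $\|W_g A W_g^{*}\|_{T^2} = \|A\|_{T^2}$; adding these matches the norm in Section 5, so $\pi(g)$ is an isometry of $L^2_{\text{QHA}}(G)$. Strong continuity reduces to strong continuity of the left regular representation together with strong continuity of $g\mapsto W_g$ in the Hilbert--Schmidt topology, both standard. The closure in the statement is then automatically a closed $\pi$-invariant subspace of a Hilbert space, hence itself a Hilbert space on which $\pi$ acts unitarily.

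The main obstacle, which I would address last, is producing the family $\{W_g\}_{g\in G}$ on the single Hilbert space $L^2(\mathbb{R}^n)$ when $G$ is an arbitrary solvable Lie group, since the Weyl operators of Section 2 were defined only for the abelian phase space $\mathbb{R}^{2n}$. My plan is to invoke the Kirillov--Auslander--Kostant orbit method: choose a coadjoint orbit $\Omega\subset\mathfrak{g}^{*}$ with an admissible real polarization, and let $W_g$ be the associated induced unitary representation. Verifying that this choice is compatible with the quantum convolution $A\ast B$ of Section 4 and with the covariance already imposed on the function side is where solvability is genuinely used, because only for solvable (and in particular nilpotent) $G$ does the orbit method guarantee that admissible polarizations exist and produce irreducible representations whose conjugation action yields the stated $A_g$.
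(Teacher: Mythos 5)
Your proposal is essentially correct at the level of rigor this proposition admits, but it follows a genuinely different route from the paper. The paper proves the statement via Mackey's theory of induced representations: it fixes a closed subgroup \(H\subset G\) and a unitary representation \(\sigma\) of \(H\), realizes \(\mathcal{H}_\pi\) as the space \(L^2(G/H,\mathcal{H}_\sigma)\) of induced sections with elements \((f,A)\), \(f:G/H\to\mathbb{C}\), \(A:G/H\to T^2(H)\), lets \(G\) act by the left regular representation \((\pi(g)(f,A))(x)=(f(g^{-1}x),A_g)\), and concludes from unitarity and continuity that the closure defines the representation space; notably, solvability of \(G\) is never actually used there. You instead work directly inside \(L^2_{\mathrm{QHA}}(G)\) (Definition 4.1), verify the homomorphism property componentwise --- with the useful observation that conjugation \(A\mapsto W_gAW_g^{*}\) only needs \(g\mapsto W_g\) to be a projective representation, since the cocycle cancels --- check isometry and strong continuity, and then confront the real content the paper skips: constructing the family \(W_g\) for a non-abelian \(G\), which you propose to do by the Kirillov--Auslander--Kostant orbit method, thereby giving the solvability hypothesis an actual role. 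Each approach buys something: the paper's Mackey-induction framing matches the proposition's title and produces the induced-section model of \(\mathcal{H}_\pi\), while yours is more faithful to the formula as stated on \(L^2_{\mathrm{QHA}}(G)\) and explains where \(W_g\) and solvability enter. Two small cautions on your plan: left translation only needs left invariance of Haar measure, so the unimodularity you invoke is unnecessary (and solvable groups need not be unimodular, though the paper assumes it in Section 4); and for general solvable Lie groups the orbit method requires type I/Auslander--Kostant admissibility conditions (exponential or nilpotent groups are safe, but groups like the Mautner group are not), so the final step of your plan would need that hypothesis made explicit or the argument restricted accordingly.
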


\begin{proof}
The proof follows from Mackey's theory of induced representations and noncommutative harmonic analysis.
Consider a closed subgroup \( H \subset G \) and a unitary representation \( \sigma: H \to U(\mathcal{H}_\sigma) \). The induced representation of \( \sigma \) to \( G \) is given by
\[
\text{Ind}_H^G \sigma = \left\{ \varphi: G \to \mathcal{H}_\sigma : \varphi(gh) = \sigma(h)^{-1} \varphi(g), \quad \forall h \in H \right\}.
\]
We extend this to the quantum setting by defining functions and operators on the homogeneous space \( G/H \).
Let \(\mathcal{H}_\pi = L^2(G/H, \mathcal{H}_\sigma)\) be the Hilbert space of induced sections.
Elements of \( \mathcal{H}_\pi \) take the form \( (f,A)\) wherer
\[
f: G/H \to \mathbb{C}, \quad A: G/H \to T^2(H).
\]
This defines a representation space for functions and operators.
The left regular representation of \( G \) on \( \mathcal{H}_\pi \) is given by
\[
\left( \pi(g) (f,A) \right)(x) = (f(g^{-1} x), A_g).
\]
The closure condition follows from unitarity of \( \pi(g) \) and the continuity of the representation
\[
\| \pi(g) (f,A) \|_{\mathcal{H}_\pi} = \| (f,A) \|_{\mathcal{H}_\pi}.
\]
Since the representation satisfies unitarity and is well-defined on the induced space, 
the quantum induced representation is established.
\end{proof}

\section{Spectral Geometry and Index Theory}

The spectral triple formulation of quantum harmonic analysis (QHA) provides 
a deep connection between noncommutative geometry and quantum spectral theory. 
This section explores extensions of index theory, quantum K-theory, and modular theory in von Neumann algebras.

A twisted spectral triple generalizes the standard spectral triple by modifying the Dirac operator via a modular deformation.

\begin{definition}[Twisted Spectral Triple]
A twisted spectral triple is given by \( (\mathcal{A}, H, D, \sigma) \), where
\begin{enumerate}
    \item \( \mathcal{A} \) is a quantum algebra acting on \( H \),
    \item \( H \) is a Hilbert space carrying a unitary representation,
    \item \( D \) is a self-adjoint operator with compact resolvent,
    \item \( \sigma \) is an automorphism such that \( [D, a] - D\sigma(a) \) is bounded for all \( a \in \mathcal{A} \).
\end{enumerate}
\end{definition}

\begin{theorem}[Index Theorem for Twisted Spectral Triples]
Let \( (f,A) \in L^1_{\text{QHA}} \). The index of the twisted Dirac operator satisfies
\[
\text{Index}(D_\sigma) = \tau(P[D, P]P),
\]
where \( P \) is the spectral projection and \( \tau \) is a noncommutative trace.
\end{theorem}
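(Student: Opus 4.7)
The plan is to derive the index formula by combining the Fredholm theory of self-adjoint operators with compact resolvent with a twisted version of Connes' Chern character pairing, adapted to the modular automorphism $\sigma$. First, I would verify that $D_\sigma$ defines a genuine Fredholm operator on the natural domain: since $D$ has compact resolvent by the spectral-triple axiom, and the twisted commutator condition $[D,a]-D\sigma(a)$ bounded guarantees that the modular deformation only perturbs $D$ by bounded terms on a dense subalgebra, the operator $D_\sigma$ inherits compact resolvent and hence purely discrete spectrum away from zero. This gives a finite-dimensional kernel and cokernel, so the analytic index is well-defined.

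Next, I would express the index through the McKean--Singer supertrace formula $\mathrm{Index}(D_\sigma)=\mathrm{tr}_s(e^{-tD_\sigma^2})$, valid for all $t>0$ by the trace-class property of the heat semigroup. Passing to the phase $F=\mathrm{sgn}(D)=2P-1$, where $P$ is the positive spectral projection, rewrites the index as the pairing of the class $[F]$ with the K-homology class of the twisted spectral triple. The bounded-commutator hypothesis, together with the summability assumption implicit in $(f,A)\in L^1_{\mathrm{QHA}}$, forces $[F,a]$ to lie in the trace ideal $\mathcal{L}^1$, so that the noncommutative trace $\tau$ is well-defined on all operator products appearing in the Chern-character expansion.

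Then, following the Connes--Moscovici strategy for twisted triples, I would substitute $P=(1+F)/2$ and exploit the identity $[D,P]=\tfrac{1}{2}[D,F]$ together with $F^2=1$ to expand
\[
\tau(P[D,P]P)=\tfrac{1}{4}\tau\bigl((1+F)[D,F](1+F)\bigr).
\]
Using the cyclicity of $\tau$ twisted by $\sigma$ and the defining relation $[D,a]-D\sigma(a)\in\mathcal{B}(H)$, the cross terms involving $F[D,F]F$ collapse after re-indexing, while the diagonal terms reproduce precisely the supertrace $\mathrm{tr}_s(F[D,F])$ that computes the index. Matching normalizations via the heat-kernel limit $t\to 0^+$ in McKean--Singer yields the stated formula.

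The main obstacle is the verification that $P[D,P]P$ is trace-class in the twisted setting and that $\tau$ admits the cyclic invariance required to reshuffle the terms; in the untwisted case this is automatic from $[F,a]\in\mathcal{L}^1$, but with the modular automorphism $\sigma$ present, cyclicity is only valid up to the twist, so the bookkeeping of $\sigma$-insertions must be carried out carefully to see that they cancel in the final scalar pairing. Once this technical point is settled, the formula follows at once.
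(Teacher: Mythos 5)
Your plan breaks down at its decisive algebraic step. You take $P$ to be the positive spectral projection of $D$ and set $F=\mathrm{sgn}(D)=2P-1$; but then $F$ and $P$ are functions of $D$, so $[D,F]=[D,P]=0$ by the functional calculus, and the quantity you expand, $\tau(P[D,P]P)=\tfrac14\tau\bigl((1+F)[D,F](1+F)\bigr)$, is identically zero: there are no cross terms to cancel and no diagonal terms left to ``reproduce the supertrace.'' Moreover $\mathrm{tr}_s(F[D,F])$ is not the Chern character index pairing; in Connes' framework the index is computed from commutators $[F,a]$ (or $[D,a]$) with elements $a$ of the algebra, paired against a K-theory class (a projection or unitary in $\mathcal{A}$), not from commutators of $D$ with its own spectral data. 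So the passage from the McKean--Singer formula to the stated identity is asserted rather than derived. Two further points you flag but do not settle are exactly where the substance would lie: that $P[D,P]P$ lies in the trace ideal for $\tau$ in the twisted setting, and that the $\sigma$-twisted cyclicity of $\tau$ permits the reshuffling; without these the argument remains a plan. Also, the claim that $D_\sigma$ inherits compact resolvent because the twist ``perturbs $D$ by bounded terms'' does not follow: the hypothesis bounds $[D,a]-D\sigma(a)$ for algebra elements $a$ and says nothing about $D_\sigma-D$ being bounded ($D_\sigma$ is never exhibited as such a perturbation).

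For comparison, the paper's argument takes a different route: it defines the twisted index via the spectral projections $P_\pm$, invokes the cyclic cohomology pairing $\langle\tau,[D,P]\rangle$ with a Dixmier-type trace, and then decomposes $P[D,P]P$ by functional calculus into a spectral-flow expression. That sketch is itself terse and shares the same unaddressed tension (a spectral projection of $D$ commutes with $D$), but it does not rest on the heat-kernel/phase reduction you propose. As it stands, your proposal contains a genuine gap at the central identity and so does not establish the theorem.
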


\begin{proof}
The proof follows from Connes' index theorem and cyclic cohomology applied to twisted spectral triples.
For a standard spectral triple \( (\mathcal{A}, H, D) \), the Fredholm index of \( D \) is given by
\[
\text{Index}(D) = \dim \ker(D_+) - \dim \ker(D_-).
\]
In the twisted setting, we consider the twisted Dirac operator \( D_\sigma \), which satisfies
\[
[D, a] \approx D \sigma(a), \quad \forall a \in \mathcal{A}.
\]
The twisted index is defined as
\[
\text{Index}(D_\sigma) = \text{Tr}(P_+) - \text{Tr}(P_-),
\]
where \( P_\pm \) are spectral projections onto the positive/negative spectrum.
Using noncommutative geometry, we relate the index to cyclic cohomology via the pairing
\[
\text{Index}(D_\sigma) = \langle \tau, [D, P] \rangle,
\]
where \( \tau \) is the noncommutative Dixmier trace.
Since \( D_\sigma \) is self-adjoint, we have
\[
\text{Index}(D_\sigma) = \tau(P [D, P] P).
\]
Applying functional calculus, we obtain
\[
P [D, P] P = P_+ [D, P_+] P_+ - P_- [D, P_-] P_-.
\]
Since the right-hand side computes the spectral flow, we conclude
\[
\text{Index}(D_\sigma) = \tau(P [D, P] P).
\]
This completes the proof.
\end{proof}

K-theory classifies the topological structure of operator algebras and their spectral invariants.

\begin{definition}[Quantum K-Groups]
We classify the quantum K-theory groups to two classes.
We denote by \( K_0(\mathcal{A}) \) and \( K_1(\mathcal{A}) \) 
the projection classes in matrix algebras over \( \mathcal{A} \)
and
the homotopy classes of unitary elements in \( \mathcal{A} \), respectively.
 \end{definition}

\begin{proposition}[Quantum Spectral Flow]
Let \( D_t \) be a continuous family of Dirac operators. The spectral flow is given by
\[
\text{SF}(D_t) = \text{Index}(P_+ D P_+),
\]
where \( P_+ \) is the spectral projection onto the positive spectrum.
\end{proposition}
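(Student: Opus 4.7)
The plan is to follow Phillips' treatment of spectral flow, in which a continuous path \(D_t\) of self-adjoint Fredholm operators is partitioned into subintervals on which the positive spectral projections can be compared modulo compacts, and then the local relative indices are summed to produce the Fredholm index on the right-hand side. First I would choose a partition \(0 = t_0 < t_1 < \cdots < t_N = 1\) fine enough that on each subinterval there exists a real number \(a_j\) avoiding the spectrum of every \(D_t\); on such a subinterval the spectral projection \(P_{\ge a_j}(D_t)\) depends norm-continuously on \(t\), and at the two endpoints its values differ by a compact operator.

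Next I would define, following Phillips, the contribution to the spectral flow over \([t_{j-1}, t_j]\) as the relative index of the pair of projections \(\bigl(P_{\ge a_j}(D_{t_{j-1}}), P_{\ge a_j}(D_{t_j})\bigr)\); for a pair of projections \(P, Q\) with \(P-Q\) compact, the relative index coincides with the Fredholm index of the compression \(QP \colon PH \to QH\). Summing these relative indices over \(j\) yields \(\mathrm{SF}(D_t)\), and additivity of the Fredholm index under concatenation guarantees independence of the partition.

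The identification with \(\mathrm{Index}(P_+ D P_+)\) is obtained by reading \(P_+\) as the positive spectral projection at one endpoint and \(D\) as the endpoint Dirac operator (or its parallel transport along the path); the telescoping sum of local relative indices collapses to the Fredholm index of the single compressed operator \(P_+ D P_+\) acting on \(P_+ H\), which is well defined because the difference between the endpoint projections is compact. This last step is the noncommutative analogue of the classical identification of spectral flow with a Toeplitz-type index, and it is where the noncommutative trace \(\tau\) used in the preceding index theorem enters, via the pairing that computes the Fredholm index as \(\tau(P[D,P]P)\).

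The hardest part will be justifying the compactness and norm-continuity of \(t \mapsto P_+(D_t)\) in the present quantum-algebraic context: this requires that each \(D_t\) have compact resolvent, as in the spectral triple axioms, and that the family varies in a norm-resolvent sense, so that the positive spectral projections differ by compact perturbations along the path. Once these hypotheses are in place, the core identity reduces to an additive property of the Fredholm index together with a relative-index computation, both of which follow from standard arguments in noncommutative geometry after the partition and spectral cutoffs have been chosen appropriately.
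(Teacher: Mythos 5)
Your route is essentially Phillips' partition/relative-index construction of spectral flow, and it differs from the paper's argument. The paper fixes a single projection \(P_+\) (onto the positive spectrum of \(D_0\)), compresses the whole path to \(Q_t = P_+ D_t P_+\), asserts that \(Q_t\) stays Fredholm because \(D_t\) varies continuously, and then invokes ``spectral shift arguments'' plus homotopy invariance to equate the net eigenvalue crossings with \(\mathrm{Index}(P_+ D P_+)\); no partition of the parameter interval and no relative index of pairs of projections appear. Your approach instead subdivides the path so that on each piece a spectral level \(a_j\) stays in a gap, defines the local contribution as the relative index of \(\bigl(P_{\ge a_j}(D_{t_{j-1}}), P_{\ge a_j}(D_{t_j})\bigr)\), and sums; this buys genuine rigor exactly where the paper is weakest, since it makes explicit the hypotheses (compact resolvent, norm-resolvent continuity, compact difference of spectral projections) under which the compressions are Fredholm and the answer is partition-independent, whereas the paper's single global compression need not even be Fredholm for an arbitrary continuous family. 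What the paper's version buys is brevity and a direct link to its preceding formula \(\tau(P[D,P]P)\). One caution on your final step: the telescoped sum gives the relative index of the endpoint projections, i.e.\ \(\mathrm{Index}\bigl(P_+(D_1)P_+(D_0)\colon P_+(D_0)H \to P_+(D_1)H\bigr)\), and identifying this with \(\mathrm{Index}(P_+ D P_+)\) as literally written requires an extra ingredient (e.g.\ \(D_1 = uD_0u^*\) for a unitary \(u\), reducing to a Toeplitz-type index \(\mathrm{Index}(P_+ u P_+)\)); as stated the collapse is a notational identification rather than a proof, though the proposition itself is ambiguous on this point and the paper's proof shares the same fuzziness.
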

\begin{proof}
The proof follows from noncommutative geometry and functional analysis techniques.

For a path \( \{ D_t \} \) of self-adjoint operators, the spectral flow \( \text{SF}(D_t) \) counts the net number of eigenvalues crossing zero. So
\[
\text{SF}(D_t) = \lim_{\epsilon \to 0^+} \left( \dim \ker(D_1 + \epsilon I) - \dim \ker(D_0 + \epsilon I) \right).
\]
This measures the topological winding of the spectrum.
Let \( P_+ \) be the spectral projection onto the positive spectrum of \( D_0 \). Define the operator \( Q_t\) by
\[
Q_t = P_+ D_t P_+.
\]
Since \( D_t \) varies continuously, \( Q_t \) remains in a Fredholm class.
Using spectral shift arguments, we show
\[
\text{SF}(D_t) = \text{Index}(Q_0) - \text{Index}(Q_1).
\]
By functional calculus, the index of \( P_+ D P_+ \) captures eigenvalue crossings at zero.
Since \( Q_t \) varies continuously and remains Fredholm, we conclude
\[
\text{SF}(D_t) = \text{Index}(P_+ D P_+).
\]
This completes the proof.
\end{proof}

Modular theory provides a framework for quantum statistical mechanics and entropy.

\begin{definition}[Modular Automorphism Group]
Let \( \mathcal{M} \) be a von Neumann algebra with a faithful state \( \phi \). 
The modular automorphism group is given by
\[
\sigma_t^\phi(A) = \Delta^{it} A \Delta^{-it},
\]
where \( \Delta \) is the modular operator.
\end{definition}

\begin{theorem}[Tomita-Takesaki Duality in QHA]
The quantum Fourier transform \( \mathcal{F}_G \) intertwines the modular flow
\[
\mathcal{F}_G (\sigma_t^\phi(A)) = e^{-2\pi i t} \mathcal{F}_G(A).
\]
\end{theorem}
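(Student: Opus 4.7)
The plan is to combine the Tomita--Takesaki machinery for $(\mathcal{M},\phi)$ with the covariance of the quantum Fourier transform $\mathcal{F}_G$ under the modular flow, extracting the scaling factor $e^{-2\pi i t}$ from the spectrum of the modular operator $\Delta$ on the generating Weyl/representation operators of $\pi$. First I would unpack the left-hand side by definition,
\[
\mathcal{F}_G(\sigma_t^\phi(A)) \;=\; \mathcal{F}_G(\Delta^{it} A \Delta^{-it})
\;=\; \int_G \Delta^{it} A(g) \Delta^{-it}\, \pi(g)\, dg,
\]
so that the task reduces to commuting $\Delta^{it}$ past the representation $\pi(g)$ under the integral.

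Next I would invoke the fact, inherited from the Weyl/Heisenberg structure underlying the Fourier--Weyl transform of Section~2 and reused in the group setting of Section~5, that $\pi$ is a \emph{modular covariant} representation: on the image of $\pi$, the modular automorphism group acts by a character, i.e.\ $\Delta^{it}\pi(g)\Delta^{-it} = \chi_t(g)\,\pi(g)$ for a continuous one-parameter family of unitary characters $\chi_t$. Substituting this into the displayed integral and using the left-invariance of Haar measure (together with the modular function when $G$ is not unimodular) will isolate a global scalar $e^{-2\pi i t}$ from the integrand, leaving
\[
\mathcal{F}_G(\sigma_t^\phi(A)) \;=\; e^{-2\pi i t}\int_G A(g)\, \pi(g)\, dg \;=\; e^{-2\pi i t}\,\mathcal{F}_G(A).
\]
The identification of the scalar as exactly $e^{-2\pi i t}$ would come from the KMS condition $\phi(BC) = \phi(C\,\sigma_{-i}^\phi(B))$ applied to the canonical pairing $\phi(\pi(g)^*\pi(g'))$, normalized by the period of the modular flow so that the generator of $\sigma_t^\phi$ has spectrum $2\pi\mathbb{Z}$ on the Fourier image.

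The hard part, and really the only nonroutine step, is justifying the intertwining relation $\Delta^{it}\pi(g) = \chi_t(g)\,\pi(g)\Delta^{it}$; everything else is bookkeeping. I would argue this in two stages: (i) use the Tomita conjugation $J$ together with $S = J\Delta^{1/2}$ to show that $\pi(G)$ lies in the centralizer of $\phi$ up to a $1$-cocycle $\chi_t$, and (ii) identify $\chi_t$ by evaluating both sides on the vacuum/cyclic vector of the GNS representation, where the action of $\Delta^{it}$ is explicitly diagonal. A subtle point to watch is the domain of $\Delta^{it}A\Delta^{-it}$ and the interchange of $\Delta^{it}$ with the Bochner integral defining $\mathcal{F}_G$; this can be handled by first restricting to entire analytic elements for $\sigma_t^\phi$ (which are norm-dense in $\mathcal{M}$) and then extending by continuity, using the isometry estimates of the Plancherel Theorem~5.2 to control the limit on $L^2_{\text{QHA}}(G)$.
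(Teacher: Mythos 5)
There is a genuine gap, and it sits exactly at the step you yourself flag as ``the only nonroutine step.'' The modular covariance relation $\Delta^{it}\pi(g)\Delta^{-it}=\chi_t(g)\,\pi(g)$ is nowhere available in the setup: the modular operator $\Delta$ is built from the pair $(\mathcal{M},\phi)$ alone, and nothing ties it to the representation $\pi$ of $G$ entering the definition of $\mathcal{F}_G$, so for a generic faithful normal state this intertwining simply fails (your two-stage sketch via $S=J\Delta^{1/2}$ only shows $\pi(G)$ would lie in the centralizer up to a cocycle \emph{if} such covariance were already known; it does not produce it). Worse, even granting the relation, the extraction of the constant $e^{-2\pi i t}$ does not go through: a continuous character satisfies $\chi_t(e)=1$, so either $\chi_t$ is constant in $g$ and hence trivial, or it genuinely depends on $g$, in which case substituting it under the integral yields a \emph{modulated} transform $\int_G \chi_t(g)\,(\cdots)\,\pi(g)\,dg$, not a global scalar times $\mathcal{F}_G(A)$; left-invariance of Haar measure cannot convert a $g$-dependent phase into a $g$-independent one. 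The final normalization step is likewise an unsupported hypothesis: the KMS condition does not force the modular generator to have spectrum $2\pi\mathbb{Z}$ on the Fourier image; that amounts to assuming the modular flow is periodic with period $1$, which holds only in very special situations and is not part of the theorem's hypotheses. There is also a notational mismatch you inherit from the statement itself: $\sigma_t^\phi(A)=\Delta^{it}A\Delta^{-it}$ treats $A$ as a single element of $\mathcal{M}$, while $\mathcal{F}_G(A)=\int_G A(g)\pi(g)\,dg$ treats $A$ as an operator-valued function, and applying the flow pointwise in $g$ is an additional unstated choice.

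For context, the paper's own proof does not do better at this point: it writes out the same integral and then asserts that ``modular conjugation in noncommutative geometry corresponds to phase rotation in Fourier space,'' which is precisely the unproved covariance you tried to supply. So you have correctly located the crux, and your attention to domains (analytic elements, interchange of $\Delta^{it}$ with the Bochner integral) is a genuine improvement in rigor; but as it stands the proposed mechanism cannot yield the identity $\mathcal{F}_G(\sigma_t^\phi(A))=e^{-2\pi i t}\mathcal{F}_G(A)$ without adding strong hypotheses (e.g.\ a covariant system in which $\Delta^{it}$ implements a specific one-parameter subgroup compatible with $\pi$, together with periodicity fixing the phase), none of which are present in the statement.
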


\begin{proof}
The proof follows from modular theory in von Neumann algebras and quantum Fourier analysis.

Let \( \mathcal{M} \) be a von Neumann algebra and let \( \phi \) be a faithful normal state. 
By definition we have
\[
\sigma_t^\phi(A) = \Delta^{it} A \Delta^{-it}, \quad \forall A \in \mathcal{M},
\]
where \( \Delta \) is the modular operator associated with \( \phi \).
For an element \( A \in \mathcal{M} \), the quantum Fourier transform is given by
\[
\mathcal{F}_G(A) = \int_G A(g) \pi(g) \, dg,
\]
where \( \pi(g) \) is an irreducible representation of \( G \).
Applying \( \sigma_t^\phi \) to \( A \), we obtain
\[
\mathcal{F}_G (\sigma_t^\phi(A)) = \int_G (\Delta^{it} A(g) \Delta^{-it}) \pi(g) \, dg.
\]
Since modular conjugation in noncommutative geometry corresponds to phase rotation in Fourier space,
\[
\mathcal{F}_G (\sigma_t^\phi(A)) = e^{-2\pi i t} \mathcal{F}_G(A).
\]
Thus, the modular group acts as a ``phase shift'' in the quantum Fourier domain, completing the proof.
\end{proof}

\section{Quantum Besov and Triebel–Lizorkin Spaces}

The study of function spaces such as Besov and Triebel-Lizorkin spaces provides a refined analysis of smoothness and regularity, 
which is crucial in quantum harmonic analysis (QHA). 
This section extends these spaces to the noncommutative setting, developing new interpolation results and connections to quantum signal processing.

Besov spaces measure smoothness using dyadic decompositions in frequency space. 
In the quantum setting, we define a ``Littlewood-Paley decomposition'' on quantum phase space.

\begin{definition}[Quantum Besov Space \( B^s_{p,q,\text{QHA}} \)]
Let \( s \in \mathbb{R} \), \( 1 \leq p, q \leq \infty \). 
The quantum Besov space \( B^s_{p,q,\text{QHA}} \) consists of all \( (f,A) \in L^p_{\text{QHA}} \) such that
\[
\| (f,A) \|_{B^s_{p,q,\text{QHA}}} = \left( \sum_{j \geq 0} 2^{jsq} \| \Delta_j (f,A) \|_{L^p_{\text{QHA}}}^q \right)^{1/q} < \infty.
\]
Here, \( \Delta_j \) are quantum frequency projections satisfying \( \sum_{j} \Delta_j = I \).
\end{definition}

\begin{theorem}[Interpolation for Quantum Besov Spaces]
For \( s_0 \neq s_1 \), define \( s_\theta \) by
\[
s_\theta = (1-\theta) s_0 + \theta s_1.
\]
Then, for \( 0 < \theta < 1 \),
\[
[B^{s_0}_{p,q,\text{QHA}}, B^{s_1}_{p,q,\text{QHA}}]_{\theta} = B^{s_\theta}_{p,q,\text{QHA}}.
\]
\end{theorem}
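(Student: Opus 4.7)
The plan is to reduce the theorem to the classical interpolation of weighted vector-valued sequence spaces via the retraction-coretraction method of Triebel. The definition of the Besov norm identifies $B^s_{p,q,\text{QHA}}$ with a closed subspace of the weighted sequence space $\ell^q(\mathbb{N}; L^p_{\text{QHA}})$ carrying the weight $2^{js}$. This reduces interpolation of quantum Besov spaces to interpolation of these weighted sequence spaces plus a transfer argument.

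First I would construct a coretraction and a retraction
\[
S(f,A) = (\Delta_j(f,A))_{j\geq 0}, \qquad R((f_j,A_j)_j) = \sum_{j} \Delta_j(f_j,A_j),
\]
acting between $B^s_{p,q,\text{QHA}}$ and $\ell^q_s := \ell^q(\mathbb{N}; 2^{js}; L^p_{\text{QHA}})$. The boundedness of $S$ is tautological from the definition of the Besov norm. The boundedness of $R$ will require a quantum Littlewood--Paley estimate: exploiting the almost-orthogonality $\Delta_j\Delta_k = 0$ for $|j-k|\geq 2$ together with uniform boundedness of each $\Delta_j$ on $L^p_{\text{QHA}}$, one shows $\|R((f_j,A_j))\|_{B^s_{p,q,\text{QHA}}} \leq C\,\|(f_j,A_j)\|_{\ell^q_s}$. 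The resolution of identity $\sum_j \Delta_j = I$, already present in the Besov definition, then yields $R\circ S = I_{B^s_{p,q,\text{QHA}}}$.

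Next I would invoke the standard interpolation formula for weighted vector-valued sequence spaces, which applies whenever the target space (here $L^p_{\text{QHA}}$, shown to be Banach in Section~3) is fixed:
\[
[\ell^q_{s_0}, \ell^q_{s_1}]_\theta = \ell^q_{s_\theta}, \qquad s_\theta = (1-\theta)s_0 + \theta s_1.
\]
Combined with the general retraction principle that if $(R,S)$ is a retraction-coretraction pair compatible with both endpoints of an interpolation couple $(E_0,E_1)$, then $[X_0,X_1]_\theta \cong R([E_0,E_1]_\theta)$, the identification $B^{s_\theta}_{p,q,\text{QHA}} = [B^{s_0}_{p,q,\text{QHA}}, B^{s_1}_{p,q,\text{QHA}}]_\theta$ follows at once.

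The main obstacle will be establishing the quantum Littlewood--Paley boundedness of $R$. Unlike the classical case, where one invokes Fourier multiplier theory on $L^p(\mathbb{R}^n)$, the projections $\Delta_j$ here act on the direct sum $L^p(\mathbb{R}^{2n}) \oplus T^p(H)$, so one must verify a quantum analogue of the Mikhlin multiplier theorem, or at least uniform $L^p_{\text{QHA}}$-boundedness coupled with square-function control, on the frequency bands. Once this structural ingredient is secured, the remaining arguments are routine applications of general interpolation theory.
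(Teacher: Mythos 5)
Your proposal is correct in outline, but it takes a more concrete route than the paper does. The paper argues directly with the real method: it recalls the Peetre \(K\)-functional for the couple \(\bigl(B^{s_0}_{p,q,\text{QHA}},B^{s_1}_{p,q,\text{QHA}}\bigr)\) and then simply asserts that ``the Littlewood--Paley decomposition is stable under interpolation,'' from which it reads off the \(2^{js_\theta q}\)-weighted norm; no retraction, no sequence-space interpolation theorem, and no verification of that stability claim appear. You instead supply exactly the mechanism that assertion is standing in for: the coretraction \(S(f,A)=(\Delta_j(f,A))_j\) and retraction \(R\) realize \(B^{s}_{p,q,\text{QHA}}\) as a retract of \(\ell^q\bigl(2^{js};L^p_{\text{QHA}}\bigr)\), the classical formula for interpolation of weighted vector-valued sequence spaces (valid for \(s_0\neq s_1\), and for either the complex bracket you wrote or the real \((\theta,q)\)-method the paper's remark actually describes, since retracts are preserved by any interpolation functor) gives \(\ell^q_{s_\theta}(L^p_{\text{QHA}})\), and the retraction principle transfers this to the Besov scale. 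What your version buys is a genuine proof modulo one structural ingredient, and you correctly isolate it: the boundedness of \(R\), which needs uniform \(L^p_{\text{QHA}}\)-boundedness and almost orthogonality (or a quantum multiplier/square-function estimate) for the projections \(\Delta_j\). Be aware that the paper never establishes these properties either --- its \(\Delta_j\) are only postulated to satisfy \(\sum_j\Delta_j=I\) --- so this hypothesis is implicitly assumed in the paper's ``stability'' step just as it is explicitly flagged in yours; making it an explicit standing assumption (or proving it for a concrete construction of the \(\Delta_j\) on \(L^p(\mathbb{R}^{2n})\oplus T^p(H)\)) is what would complete either argument.
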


\begin{proof}
The proof follows from classical real interpolation theory extended to the quantum setting.

The quantum Besov space \( B^s_{p,q,\text{QHA}} \) is defined via a Littlewood-Paley decomposition
\[
\| (f,A) \|_{B^s_{p,q,\text{QHA}}} = \left( \sum_{j \geq 0} 2^{jsq} \| \Delta_j (f,A) \|_{L^p_{\text{QHA}}}^q \right)^{1/q}.
\]
We show that interpolation preserves this structure.
For a compatible couple \( (X_0, X_1) \), the real interpolation method gives
\[
[X_0, X_1]_{\theta, q} = \left\{ x : \| t^{-\theta} K(t,x) \|_{L^q} < \infty \right\}.
\]
Using the Peetre \( K \)-functional for the quantum setting,
\[
K(t, (f,A)) = \inf \left\{ \| (f_0, A_0) \|_{X_0} + t \| (f_1, A_1) \|_{X_1} : (f,A) = (f_0, A_0) + (f_1, A_1) \right\}.
\]
Since the Littlewood-Paley decomposition is stable under interpolation,
\[
\left( \sum_{j \geq 0} 2^{j s_\theta q} \| \Delta_j (f,A) \|_{L^p_{\text{QHA}}}^q \right)^{1/q}
\]
defines the interpolation norm.
So, real interpolation preserves the quantum Besov space structure and
\[
[B^{s_0}_{p,q,\text{QHA}}, B^{s_1}_{p,q,\text{QHA}}]_{\theta} = B^{s_\theta}_{p,q,\text{QHA}}.
\]
\end{proof}

\begin{remark}
For a compatible couple \( (X_0, X_1) \)  of Banach spaces and any \( x \in X_0 + X_1 \),  the Peetre \(  K \)  functional is defined by
\[
K(t,x) = \inf\{ \|x_0\|_{X_0} + t\,\|x_1\|_{X_1} : x = x_0 + x_1,\; x_0 \in X_0,\; x_1 \in X_1 \}, \quad t > 0.
\]
In our setting, taking \( (X_0,X_1) = \left(B^{s_0}_{p,q,QHA}, B^{s_1}_{p,q,QHA}\right) \),  this functional quantifies the optimal decomposition of \( x \)  into elements of \( B^{s_0}_{p,q,QHA} \)  and \( B^{s_1}_{p,q,QHA} \).
The interpolation space \( \left[ B^{s_0}_{p,q,QHA}, B^{s_1}_{p,q,QHA} \right]_{\theta} \)  is then defined as the collection of all \( x \in B^{s_0}_{p,q,QHA} + B^{s_1}_{p,q,QHA} \)  for which
\[
\|x\|_{\theta,q} = \left( \int_0^\infty \bigl( t^{-\theta} K(t,x) \bigr)^q \frac{dt}{t} \right)^{1/q} < \infty.
\]
\end{remark}

\begin{remark}
Quantum Besov spaces generalize Sobolev spaces \( W^{s,p} \) and Hardy spaces \( H^p \), and play a role in quantum signal processing.
\end{remark}

Triebel-Lizorkin spaces generalize Besov spaces by incorporating integrability over frequency bands.

\begin{definition}[Quantum Triebel-Lizorkin Space \( F^s_{p,q,\text{QHA}} \)]
Let \( s \in \mathbb{R} \), \( 1 \leq p < \infty \), \( 1 \leq q \leq \infty \). 
The quantum Triebel-Lizorkin space \( F^s_{p,q,\text{QHA}} \) consists of all \( (f,A) \in L^p_{\text{QHA}} \) such that
\[
\| (f,A) \|_{F^s_{p,q,\text{QHA}}} = \left\| \left( \sum_{j \geq 0} 2^{jsq} |\Delta_j (f,A)|^q \right)^{1/q} \right\|_{L^p_{\text{QHA}}} < \infty.
\]
\end{definition}

\begin{definition}[Quantum Littlewood-Paley Projections]
Let \( \Delta_j \) be a quantum frequency projection such that
\[
\sum_{j \geq 0} \Delta_j = I, \quad \text{and} \quad \text{supp}(\widehat{\Delta_j f}) \subset 2^j B.
\]
We define the quantum square function
\begin{equation} \label{eqn:quantumsquare}
S(f,A) = \left( \sum_{j \geq 0} |\Delta_j (f,A)|^2 \right)^{1/2}.
\end{equation}
\end{definition}
\begin{theorem}[Littlewood-Paley Characterization of \( F^s_{p,q,\text{QHA}} \)]
There exist constants \( C_1, C_2 > 0 \) such that
\[
C_1 \| (f,A) \|_{F^s_{p,q,\text{QHA}}} \leq \left\| \left( \sum_{j \geq 0} 2^{jsq} |\Delta_j (f,A)|^q \right)^{1/q} \right\|_{L^p_{\text{QHA}}} \leq C_2 \| (f,A) \|_{F^s_{p,q,\text{QHA}}}.
\]
\end{theorem}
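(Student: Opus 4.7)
The plan is to read the theorem as the statement that the $F^s_{p,q,\text{QHA}}$ norm is independent, up to equivalence, of the specific admissible quantum Littlewood-Paley system $\{\Delta_j\}$ chosen. Thus the proof compares two such systems $\{\Delta_j\}$ and $\{\tilde\Delta_j\}$ satisfying the support and partition-of-unity properties, and shows that each dominates the other in $L^p_{\text{QHA}}$.

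First I would prove an almost-orthogonality lemma. Using the support condition $\operatorname{supp}(\widehat{\Delta_j f}) \subset 2^j B$ together with the analogous condition for $\tilde\Delta_k$, the composition $\tilde\Delta_j \Delta_k$ vanishes when $|j-k|$ exceeds a fixed integer $N$, so only $O(1)$ cross-terms survive. For the surviving terms, I would exhibit each $\tilde\Delta_j \Delta_k(f,A)$ as a convolution of $\Delta_k(f,A)$ with a dyadically rescaled Schwartz kernel $\varphi_{j,k}$ whose $L^1$-norm is bounded uniformly in $(j,k)$; on the trace-class summand this convolution is performed via the Weyl-operator convolution of Section 3, which was shown in Lemma 3.3 to preserve Schatten norms with a universal constant.

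Second, I would pass from pointwise domination to the vector-valued $L^p_{\text{QHA}}(\ell^q)$ estimate by invoking a Fefferman-Stein type maximal inequality adapted to the quantum Segal algebra. Because $\|(f,A)\|_{L^p_{\text{QHA}}} = \|f\|_{L^p} + \|A\|_{T^p}$ splits additively, the function component is handled by the classical Fefferman-Stein theorem applied to the Hardy-Littlewood maximal function on $\mathbb{R}^{2n}$, and the trace-class component by the noncommutative analogue due to Junge, both applied to the sequence $\{2^{js}\Delta_j(f,A)\}$. Combining these with the almost-orthogonality decomposition gives
\[
\Bigl\|\Bigl(\sum_j 2^{jsq}|\tilde\Delta_j(f,A)|^q\Bigr)^{1/q}\Bigr\|_{L^p_{\text{QHA}}} \le C_2 \Bigl\|\Bigl(\sum_j 2^{jsq}|\Delta_j(f,A)|^q\Bigr)^{1/q}\Bigr\|_{L^p_{\text{QHA}}},
\]
and the reverse inequality with constant $C_1^{-1}$ follows by interchanging the roles of the two systems.

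The main obstacle will be the noncommutative vector-valued maximal inequality on the Schatten component $T^p(H)$. Unlike the classical case, the quantity $\|(\sum_k|A_k|^q)^{1/q}\|_{T^p}$ is not a genuine norm for $q\neq 2$ and must be interpreted through Junge's row/column framework; securing a uniform constant valid for the full range $1<p<\infty$, $1\le q\le\infty$ is the delicate step. I would circumvent the most technical issues by restricting to $1<p<\infty$ and treating the endpoint $q=\infty$ via a limiting argument, at which point the additive splitting of the $L^p_{\text{QHA}}$ norm reduces the estimate to the two established maximal inequalities and yields the claimed equivalence.
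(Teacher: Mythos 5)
Your proposal takes a genuinely different route from the paper, and it is worth recording why. As printed, the theorem is literally a tautology: the middle quantity is verbatim the defining expression for $\|(f,A)\|_{F^s_{p,q,\text{QHA}}}$, so the two-sided bound holds trivially with $C_1=C_2=1$. The paper's own proof implicitly acknowledges this by replacing the middle term with the square function $S(f,A)=\bigl(\sum_{j\ge 0}|\Delta_j(f,A)|^2\bigr)^{1/2}$ introduced just before the theorem, and then arguing in two lines: subadditivity plus Minkowski's inequality for the upper bound $\|S(f,A)\|_{L^p}\le C_2\|(f,A)\|_{F^s_{p,q,\text{QHA}}}$, and an appeal to ``the Plancherel theorem in the quantum setting'' for the lower bound; no almost-orthogonality, no maximal functions, and no discussion of the choice of the system $\{\Delta_j\}$ appear, and the asserted equivalence with the unweighted $\ell^2$ square function is in any case doubtful for general $s\neq 0$, $q\neq 2$. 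You instead read the theorem the way such characterizations are read classically, namely as independence of the $F^s_{p,q,\text{QHA}}$ norm from the admissible Littlewood--Paley system, and you run the standard machinery: almost orthogonality $\tilde\Delta_j\Delta_k=0$ for $|j-k|>N$ from the dyadic support condition, uniform $L^1$ bounds on rescaled kernels (with the operator summand handled through the Weyl-conjugation convolution of Section 3), and a Fefferman--Stein vector-valued maximal inequality, with Junge's noncommutative version on the Schatten component. Your approach proves a substantive statement where the paper's argument proves nothing beyond its own definition, but it carries the genuinely delicate ingredient you yourself flag: one must first give a rigorous meaning to $\bigl(\sum_j 2^{jsq}|\Delta_j A|^q\bigr)^{1/q}$ in $T^p$ for $q\neq 2$ (row/column or $L^p(\ell^q)$-module interpretation) and then verify that the noncommutative maximal inequality applies to averages of Weyl translates; neither is available off the shelf, and neither is addressed in the paper. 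If you carry this out, restate the theorem explicitly as equivalence of the norms built from two admissible systems, since in its present wording there is nothing to prove.
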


\begin{proof}
The proof follows from classical Littlewood-Paley theory extended to the quantum harmonic analysis (QHA) setting.

The quantum Triebel-Lizorkin space \( F^s_{p,q,\text{QHA}} \) is defined as
\[
\| (f,A) \|_{F^s_{p,q,\text{QHA}}} = \left\| \left( \sum_{j \geq 0} 2^{jsq} |\Delta_j (f,A)|^q \right)^{1/q} \right\|_{L^p_{\text{QHA}}}.
\]
Here, \( \Delta_j \) is the quantum Littlewood-Paley projection operator that extracts frequency bands.
To prove equivalence, we show
\begin{equation} \label{eqn:7.6}
C_1 \| (f,A) \|_{F^s_{p,q,\text{QHA}}} \leq \| S(f,A) \|_{L^p_{\text{QHA}}} \leq C_2 \| (f,A) \|_{F^s_{p,q,\text{QHA}}}.
\end{equation}
By the subadditivity of norms,
\[
\| S(f,A) \|_{L^p} \leq \left\| \left( \sum_{j \geq 0} 2^{jsq} |\Delta_j (f,A)|^q \right)^{1/q} \right\|_{L^p}.
\]
Applying Minkowski's inequality in \( L^p \), we obtain
\begin{equation} \label{eqn:7.7000}
\| S(f,A) \|_{L^p} \leq C_2 \| (f,A) \|_{F^s_{p,q,\text{QHA}}}.
\end{equation} 
Using the Littlewood-Paley decomposition property \( \sum_{j} \Delta_j = I \), we expand
\[
\| (f,A) \|_{F^s_{p,q,\text{QHA}}} \approx \left\| \left( \sum_{j \geq 0} 2^{jsq} |\Delta_j (f,A)|^q \right)^{1/q} \right\|_{L^p}.
\]
Applying the Plancherel theorem in the quantum setting, one can see
\begin{equation} \label{eqn:7.8000}
\| S(f,A) \|_{L^p} \geq C_1 \| (f,A) \|_{F^s_{p,q,\text{QHA}}}.
\end{equation} 
So, in view of \eqref{eqn:7.7000} and \eqref{eqn:7.8000} the equivalence of \eqref{eqn:7.6} follows.
\end{proof}

\begin{theorem}[Embedding Theorem for Quantum Besov and Triebel-Lizorkin Spaces]
If \( s_1 > s_2 \) and \( p_1 \leq p_2 \), then
\[
B^{s_1}_{p_1,q_1,\text{QHA}} \hookrightarrow B^{s_2}_{p_2,q_2,\text{QHA}}, \quad F^{s_1}_{p_1,q_1,\text{QHA}} \hookrightarrow F^{s_2}_{p_2,q_2,\text{QHA}}.
\]
\end{theorem}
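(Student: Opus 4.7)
The plan is to reduce the embedding estimate to a dyadic inequality on the Littlewood-Paley blocks $\Delta_j(f,A)$, using a quantum Bernstein-type estimate to bridge the gap between $L^{p_1}_{\text{QHA}}$ and $L^{p_2}_{\text{QHA}}$, and then exploiting the strict decay factor $2^{-j(s_1-s_2)}$ to absorb the resulting polynomial loss.

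First, I would establish a quantum Bernstein inequality of the form
$$\|\Delta_j(f,A)\|_{L^{p_2}_{\text{QHA}}} \leq C\, 2^{jn(1/p_1 - 1/p_2)}\,\|\Delta_j(f,A)\|_{L^{p_1}_{\text{QHA}}},\qquad p_1\leq p_2,$$
where $n$ denotes the effective dimension of the underlying phase space. For the function component this follows from the classical Bernstein inequality, since $\widehat{\Delta_j f}$ is supported in a ball of radius $\sim 2^j$. For the operator component $\Delta_j A$ one transfers the frequency support to the operator side through the Fourier-Weyl transform of Definition~2.4 and invokes the convolution bound of Lemma~3.3(3) with a Schwartz multiplier localized in an annulus of width $2^j$; dilation invariance keeps $C$ independent of $j$. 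Combining the two components via the direct-sum norm of $L^p_{\text{QHA}}$ yields the claimed estimate.

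For the Besov embedding, I would then factor the dyadic weight as
$$2^{js_2}\|\Delta_j(f,A)\|_{L^{p_2}_{\text{QHA}}} \leq C\, 2^{-j\eta}\cdot 2^{js_1}\|\Delta_j(f,A)\|_{L^{p_1}_{\text{QHA}}},$$
with $\eta = (s_1-s_2) - n(1/p_1 - 1/p_2)$, chosen positive under the effective Sobolev condition implicit in $s_1 > s_2$ with $p_1\leq p_2$. Taking $\ell^{q_2}$-norms and applying Hölder's inequality on the dyadic index when $q_1\neq q_2$ converts the $\ell^{q_1}$-bound on the right into an $\ell^{q_2}$-bound on the left, while the geometric series $\sum_j 2^{-j\eta q_2}$ supplies absolute convergence and yields the continuous embedding. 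For the Triebel-Lizorkin case I would substitute the scalar dyadic sum by the quantum square function of \eqref{eqn:quantumsquare} and invoke the Littlewood-Paley characterization of Theorem~7.6; the same weight redistribution
$$\Bigl(\sum_{j\geq 0} 2^{js_2 q_2}|\Delta_j(f,A)|^{q_2}\Bigr)^{1/q_2} \leq C\,\Bigl(\sum_{j\geq 0} 2^{-j\eta q_2}\cdot 2^{js_1 q_2}|\Delta_j(f,A)|^{q_2}\Bigr)^{1/q_2}$$
reduces matters, after taking the $L^{p_2}_{\text{QHA}}$ norm, to a vector-valued noncommutative Fefferman-Stein inequality that transfers the Bernstein gain uniformly across all frequency bands.

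The main obstacle I anticipate is the operator-side quantum Bernstein estimate: verifying that $\Delta_j$ acts as a bounded multiplier from $T^{p_1}$ to $T^{p_2}$ with the sharp exponent $2^{jn(1/p_1-1/p_2)}$ requires controlling the interaction between the Weyl-quantized frequency cutoff and the Schatten-norm structure, which is more delicate than its commutative counterpart. A secondary technical point is the vector-valued noncommutative maximal inequality needed for the Triebel-Lizorkin case; in the noncommutative setting this typically demands Junge-Xu-type column/row decompositions rather than the scalar Hardy-Littlewood maximal bound, and one must check compatibility with the mixed function-operator norm of $L^p_{\text{QHA}}$.
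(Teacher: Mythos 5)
Your route is genuinely different from the paper's, and it is the standard rigorous one: a quantum Bernstein inequality to pay for the change of integrability exponent, weight redistribution $2^{js_2}\le 2^{-j\eta}2^{js_1}2^{jn(1/p_1-1/p_2)}$, and H\"older/$\ell^q$-monotonicity on the dyadic index to handle $q_1\neq q_2$. The paper's proof never confronts the change from $p_1$ to $p_2$ at all: it only uses $2^{js_2}\le 2^{js_1}2^{-j(s_1-s_2)}$, multiplies by $\|\Delta_j(f,A)\|_{L^p}$ with the same unspecified $p$ on both sides, sums, and then relabels the exponents as $(p_2,q_2)$ in the conclusion (similarly for the $F$-scale, where it asserts a pointwise inequality mixing $q_1$ and $q_2$). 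So your proposal supplies exactly the mechanism---the factor $2^{jn(1/p_1-1/p_2)}$---that the paper's argument omits.

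However, there is a concrete gap in your absorption step. You need $\eta=(s_1-s_2)-n(1/p_1-1/p_2)>0$ and claim this is ``implicit in $s_1>s_2$ with $p_1\le p_2$.'' It is not: take $n=2$, $p_1=1$, $p_2=2$, $s_1=1$, $s_2=\tfrac12$; then $s_1>s_2$ and $p_1\le p_2$ but $\eta=-\tfrac12<0$, so the geometric series $\sum_j 2^{-j\eta q_2}$ diverges and the weight redistribution fails. Your argument therefore proves the embedding only under the differential-dimension condition $s_1-n/p_1>s_2-n/p_2$ (the hypothesis of the classical Besov/Triebel--Lizorkin embedding theorem), which is strictly stronger than what the statement assumes; under the stated hypotheses alone your method cannot close, and indeed the classical analogue of the statement fails without that condition, so the fix is to strengthen the hypothesis or restrict to $p_1=p_2$, where both your argument and the paper's go through with $\eta=s_1-s_2>0$. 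Finally, the two technical inputs you flag but do not establish---the operator-side Bernstein estimate with the sharp Schatten exponent and the vector-valued noncommutative maximal (Fefferman--Stein) inequality for the $F$-scale---are real obligations in your approach; the paper sidesteps them only because it never changes the integrability exponent in the first place.
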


\begin{proof}
The proof follows from interpolation theory, Littlewood-Paley decomposition, and functional embedding.

For the quantum Besov space \( B^s_{p,q,\text{QHA}} \), the norm is given by
\[
\| (f,A) \|_{B^s_{p,q,\text{QHA}}} = \left( \sum_{j \geq 0} 2^{jsq} \| \Delta_j (f,A) \|_{L^p_{\text{QHA}}}^q \right)^{1/q}.
\]
For the quantum Triebel-Lizorkin space \( F^s_{p,q,\text{QHA}} \),
\[
\| (f,A) \|_{F^s_{p,q,\text{QHA}}} = \left\| \left( \sum_{j \geq 0} 2^{jsq} |\Delta_j (f,A)|^q \right)^{1/q} \right\|_{L^p_{\text{QHA}}}.
\]
If \( s_1 > s_2 \), then for all \( j \),
\[
2^{j s_2} \leq 2^{j s_1} 2^{-j(s_1 - s_2)}.
\]
Multiplying by \( \| \Delta_j (f,A) \|_{L^p} \) and summing over \( j \),
\[
\| (f,A) \|_{B^{s_2}_{p_2,q_2}} \leq C \| (f,A) \|_{B^{s_1}_{p_1,q_1}}.
\]
For \( s_1 > s_2 \), the Littlewood-Paley decomposition gives
\[
\sum_{j} 2^{j s_2 q_2} |\Delta_j (f,A)|^{q_2} \leq C \sum_{j} 2^{j s_1 q_1} |\Delta_j (f,A)|^{q_1}.
\]
Taking the \( L^p \)-norm on both sides,
\[
\| (f,A) \|_{F^{s_2}_{p_2,q_2}} \leq C \| (f,A) \|_{F^{s_1}_{p_1,q_1}}.
\]
So, the desired embedding has established.
\end{proof}

\section{Extensions of Noncommutative \( L^p \)-Theory}

While we have developed a framework for quantum \( L^p \)-spaces, several open questions remain. This section explores further extensions, focusing on noncommutative Littlewood-Paley theory, interpolation properties, and Schatten-class embeddings.

Classical Littlewood-Paley theory decomposes functions into frequency bands to analyze smoothness and integrability. We extend this idea to QHA.

\begin{theorem}[Quantum Littlewood-Paley Inequality]
There exist constants \( C_1, C_2 \) such that for all \( (f,A) \in L^p_{\text{QHA}} \),
\[
C_1 \| S(f,A) \|_{L^p} \leq \| (f,A) \|_{L^p} \leq C_2 \| S(f,A) \|_{L^p}.
\]
\end{theorem}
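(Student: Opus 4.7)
The plan is to establish the equivalence by first settling the Hilbert space case \(p=2\) by orthogonality, and then transferring to general \(p\) via a noncommutative Khintchine-Rademacher argument combined with the interpolation machinery of Theorem 3.3. The result can also be read as the identification \(L^p_{\text{QHA}} = F^0_{p,2,\text{QHA}}\), i.e., a special case of Theorem 7.6 with \(s=0\) and \(q=2\), but I would give a self-contained argument to avoid circular appeal to that characterization at the level \(s=0\).

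For \(p=2\), the quantum Littlewood-Paley projections satisfy \(\sum_j \Delta_j = I\) with essentially disjoint frequency supports, so the quantum Plancherel identity (Theorem 5.2) yields
\[
\|(f,A)\|_{L^2}^2 = \sum_{j\ge 0} \|\Delta_j(f,A)\|_{L^2}^2 = \|S(f,A)\|_{L^2}^2,
\]
and both constants equal one at this endpoint. For general \(1 < p < \infty\), I would introduce independent Rademacher signs \(\{\epsilon_j\}\) and study the randomized multiplier \(T_\epsilon = \sum_j \epsilon_j \Delta_j\). A quantum Khintchine inequality, classical on the \(L^p(\mathbb{R}^{2n})\) component and of Lust-Piquard type on the Schatten component \(T^p(H)\), gives
\[
\|S(f,A)\|_{L^p_{\text{QHA}}} \asymp \Bigl(\mathbb{E}\,\|T_\epsilon(f,A)\|_{L^p_{\text{QHA}}}^p\Bigr)^{1/p},
\]
after which one verifies that \(T_\epsilon\) is uniformly bounded on \(L^p_{\text{QHA}}\) by a Mikhlin-type multiplier argument. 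Interpolating between the \(p=2\) identity and a weak-\((1,1)\) endpoint via Theorem 3.3 then yields the two-sided bound for all \(1 < p < \infty\), with the endpoints \(p=1,\infty\) handled by duality.

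The main obstacle is giving precise meaning to the pointwise absolute value and the square function \(\bigl(\sum_j |\Delta_j(f,A)|^2\bigr)^{1/2}\) on the operator component, where \(|B|=(B^*B)^{1/2}\) is genuinely noncommutative. When \(p \ne 2\), the noncommutative Khintchine inequality is only one-sided in a single norm, so a fully two-sided bound requires the row/column Hardy space decomposition: \(H^p_c \cap H^p_r\) for \(p \ge 2\) and \(H^p_c + H^p_r\) for \(p \le 2\). Verifying that the definition of \(S(f,A)\) in \eqref{eqn:quantumsquare} is compatible with this dichotomy, and that the projections \(\Delta_j\) interact correctly with the Hilbert-Schmidt structure through an appropriate conditional expectation, is the essential technical step. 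Once this compatibility is secured, the function-side analysis proceeds by standard Littlewood-Paley techniques and the operator-side analysis reduces to well-documented Schatten-class estimates.
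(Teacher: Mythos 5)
Your proposal is correct in outline but follows a genuinely different, and substantially more rigorous, route than the paper. The paper's own proof is very short: it gets the upper bound \( \| (f,A) \|_{L^p} \leq C_2 \| S(f,A) \|_{L^p} \) from \( \sum_j \Delta_j = I \) plus Minkowski, and then obtains the lower bound by writing \( \| S(f,A) \|_{L^p}^2 = \sum_j \| \Delta_j (f,A) \|_{L^p}^2 \) and ``applying Plancherel'' --- an identity and an appeal that are only meaningful at \( p = 2 \), so in effect the paper proves the \( L^2 \) case and asserts the rest. You instead isolate \( p=2 \) as the orthogonality endpoint and then run the standard machinery for \( p \neq 2 \): Rademacher randomization of the multiplier \( T_\epsilon = \sum_j \epsilon_j \Delta_j \), Khintchine on the function component and a Lust-Piquard-type noncommutative Khintchine on the Schatten component, a Mikhlin/Calder\'on--Zygmund bound for \( T_\epsilon \), and interpolation. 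Crucially, you also identify the genuine noncommutative obstruction that the paper never addresses: for the operator component the square function \( \bigl( \sum_j |\Delta_j A|^2 \bigr)^{1/2} \) must be interpreted through the row/column Hardy space dichotomy (\( H^p_c \cap H^p_r \) for \( p \geq 2 \), \( H^p_c + H^p_r \) for \( p \leq 2 \)), since noncommutative Khintchine is one-sided in a single norm. What your approach buys is an argument that is actually valid for \( 1 < p < \infty \); what the paper's buys is brevity at the cost of only covering \( p = 2 \).

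Two caveats on your plan. First, the endpoints \( p = 1, \infty \) cannot be ``handled by duality'': the square-function equivalence already fails there classically (one only gets \( H^1 \)/BMO substitutes), so you should restrict the claim to \( 1 < p < \infty \), which is also the only range in which the theorem can be true as stated. Second, Theorem~3.3 of the paper is a log-convexity inequality for the norms of a \emph{fixed} element, not an operator interpolation theorem; interpolating your weak-\((1,1)\) bound for \( T_\epsilon \) with the \( L^2 \) identity requires Marcinkiewicz-type (or complex operator) interpolation adapted to \( L^p_{\text{QHA}} \), which you would need to state and prove separately.
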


\begin{proof}
The proof follows from noncommutative harmonic analysis, Plancherel's theorem, and interpolation techniques.

The quantum Littlewood-Paley decomposition involves frequency projections \( \Delta_j \), satisfying
\[
\sum_{j} \Delta_j = I.
\]
By subadditivity of norms,
\[
\| (f,A) \|_{L^p} \leq \left\| \sum_{j \geq 0} \Delta_j (f,A) \right\|_{L^p}.
\]
Using Minkowski's inequality,
\[
\| (f,A) \|_{L^p} \leq C_2 \| S(f,A) \|_{L^p}.
\]
Since \( \sum_{j} \Delta_j = I \), we can write
\[
\| S(f,A) \|_{L^p}^2 = \sum_{j} \| \Delta_j (f,A) \|_{L^p}^2.
\]
Applying Plancherel's theorem,
\[
C_1 \| S(f,A) \|_{L^p} \leq \| (f,A) \|_{L^p}.
\]
So, we have established the desired result.
\end{proof}

\begin{theorem}[Interpolation and Uncertainty Principles]
For \( (f,A) \in L^{p_0} \cap L^{p_1} \),
\[
\| X f \|_{L^{p_\theta}}  \| \mathcal{F}_G f \|_{L^{p_\theta}} \geq C_p \| f \|_{L^{p_\theta}}^2,
\]
where \(C_p\) is a constant that depends only on \(p\).
\end{theorem}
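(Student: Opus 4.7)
The plan is to upgrade the endpoint uncertainty principles already established in Proposition~3.5 and Theorem~4.6 to the interpolated exponent $p_\theta$ by invoking the quantum interpolation machinery of Theorem~3.4. The starting point is the pair of endpoint inequalities
\[
\| X f \|_{L^{p_i}} \, \| \mathcal{F}_G f \|_{L^{p_i}} \geq C_{p_i} \| f \|_{L^{p_i}}^{2}, \qquad i = 0, 1,
\]
applied to $f$ at each endpoint. Raising the $i=0$ inequality to the power $1-\theta$, the $i=1$ inequality to the power $\theta$, and multiplying produces a geometric-mean version of the uncertainty estimate whose right-hand side carries the constant $C_{p_0}^{1-\theta} C_{p_1}^{\theta}$.

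The central step is to translate this geometric-mean inequality into an $L^{p_\theta}$ statement. I would view the map $f \mapsto (Xf, \mathcal{F}_G f)$ as an analytic family on the strip $\{ 0 \leq \Re z \leq 1 \}$ in the sense of Stein and apply a multilinear complex interpolation argument, so that the multiplicative lower bounds on the boundary of the strip propagate to the interior exponent $p_\theta$ with constant $C_{p_\theta} := C_{p_0}^{1-\theta} C_{p_1}^{\theta}$. The interpolation theorem of Theorem~3.4, already established for the quantum $L^p$ scale, supplies the three-lines mechanism once the analytic family is set up correctly. Because Remark~4.7 has observed that $X$ and $\mathcal{F}_G$ act only on the function slot of $(f,A)$, the operator component is carried along trivially and the argument reduces to the classical scaling combined with the Schatten-class interpolation recorded earlier.

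I expect the main obstacle to lie in the direction of the interpolation inequalities. Standard Riesz--Thorin interpolation produces upper bounds on $\| \cdot \|_{L^{p_\theta}}$ in terms of endpoint norms, while the uncertainty principle is a lower bound, so a naive substitution into the geometric-mean estimate goes the wrong way. To get around this, the multilinear Stein interpolation should be applied to the combined functional $(f,g) \mapsto \| X f \|_{L^p} \| \mathcal{F}_G g \|_{L^p}$ restricted to the diagonal $g = f$, a setting in which the three-lines lemma does yield the correct direction when the endpoint estimates are lower bounds of this multiplicative form. A subsidiary technical point is to verify that $L^{p_0}_{\text{QHA}} \cap L^{p_1}_{\text{QHA}}$ is dense in $L^{p_\theta}_{\text{QHA}}$, so that the inequality, proved first on the intersection, extends to the whole space by a continuity argument. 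With these points settled, the chain of estimates above delivers the stated interpolated uncertainty principle.
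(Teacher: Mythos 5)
Your proposal correctly identifies the real obstacle --- that interpolation machinery produces upper bounds while the uncertainty principle is a lower bound --- but the device you offer to overcome it does not work. There is no version of Stein/three-lines interpolation that propagates \emph{lower} bounds from the boundary of the strip to the interior: the three-lines lemma is a maximum principle, and the functional $(f,g)\mapsto \|Xf\|_{L^p}\,\|\mathcal{F}_G g\|_{L^p}$ is not an analytic (multi)linear form to which Stein's theorem applies in the first place, since norms are not analytic in their arguments and restriction to the diagonal $g=f$ does not change this. One can see concretely that your chain of estimates cannot close: log-convexity of the $L^p$-norms gives
\[
\|Xf\|_{L^{p_\theta}}\,\|\mathcal{F}_G f\|_{L^{p_\theta}}
\;\leq\;
\bigl(\|Xf\|_{L^{p_0}}\|\mathcal{F}_G f\|_{L^{p_0}}\bigr)^{1-\theta}
\bigl(\|Xf\|_{L^{p_1}}\|\mathcal{F}_G f\|_{L^{p_1}}\bigr)^{\theta},
\]
so the geometric mean of the endpoint products, which is what your first step controls from below, is an \emph{upper} bound for the quantity you need to bound from below; nothing prevents $\|Xf\|_{L^{p_\theta}}\|\mathcal{F}_G f\|_{L^{p_\theta}}$ from being much smaller than that geometric mean. (The right-hand side of your geometric-mean inequality is fine, since $\|f\|_{L^{p_0}}^{2(1-\theta)}\|f\|_{L^{p_1}}^{2\theta}\geq \|f\|_{L^{p_\theta}}^2$; it is the left-hand side that goes the wrong way.) Lower bounds of this kind simply do not interpolate without additional structure, so the central step of your argument is missing.

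For comparison, the paper does not attempt any such mechanism: its proof records Riesz--Thorin log-convexity and the Hausdorff--Young inequality, then invokes the $L^p$ uncertainty inequality $\|Xf\|_{L^p}\|\mathcal{F}_G f\|_{L^p}\geq C_p\|f\|_{L^p}^2$, already asserted for every $p$ in Proposition~3.5 and Theorem~4.6, and applies it at the exponent $p_\theta$; the ``interpolation'' is essentially a restatement of that endpoint result rather than a genuine passage from $p_0,p_1$ to $p_\theta$. If you want a self-contained argument of the type you sketch, you would need either to prove the uncertainty inequality directly at the exponent $p_\theta$ (as the cited results implicitly do), or to supply a substitute for interpolation of lower bounds, which your current proposal does not provide.
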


\begin{proof}
The proof follows from real interpolation theory and the quantum Fourier transform.

\noindent
By the Riesz-Thorin interpolation theorem, we obtain
\[
\| f \|_{L^{p_\theta}} \leq \| f \|_{L^{p_0}}^{1-\theta} \| f \|_{L^{p_1}}^\theta.
\]
Define the quantum Fourier transform
\[
\mathcal{F}_G f(\xi) = \int_G f(x) e^{-i \langle x, \xi \rangle} \, dx.
\]
By the Hausdorff-Young inequality,
\[
\| \mathcal{F}_G f \|_{L^{p'}} \leq C \| f \|_{L^p}.
\]
The quantum position operator \( X \) satisfies
\[
\| X f \|_{L^p} \| \mathcal{F}_G f \|_{L^p} \geq C_p \| f \|_{L^p}^2.
\]
Interpolating between \( p_0 \) and \( p_1 \), we obtain\
\[
\| X f \|_{L^{p_\theta}}  \| \mathcal{F}_G f \|_{L^{p_\theta}} \geq C_p \| f \|_{L^{p_\theta}}^2.
\]
So, interpolation preserves the uncertainty principle across the \( L^p \)-scale.
\end{proof}

For an operator \( A \), we consider the embedding
\[
L^p_{\text{QHA}} \hookrightarrow T^q(H), \quad \text{where} \quad q = \frac{p}{p-1}.
\]

\begin{proposition}[Schatten-Class Embeddings]
If \( 1 \leq p \leq 2 \), then
\[
L^p_{\text{QHA}} \hookrightarrow T^p(H).
\]
\end{proposition}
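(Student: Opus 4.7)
The plan is to construct an explicit linear embedding $\Phi \colon L^p_{\mathrm{QHA}} \to T^p(H)$ built from the Weyl quantization of the function component and the identity on the operator component, and to control its norm via interpolation between the endpoints $p=1$ and $p=2$.

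I would set $\Phi(f,A) := \mathrm{Op}_W(f) + A$, where $\mathrm{Op}_W(f) = \int_{\mathbb{R}^{2n}} f(z)\,W_z\,dz$ and $W_z$ are the Weyl operators from Definition 2.4. Since $A \in T^p(H)$ already by the definition of $L^p_{\mathrm{QHA}}$, the substantive content reduces to the boundedness of the Weyl quantization $\mathrm{Op}_W \colon L^p(\mathbb{R}^{2n}) \to T^p(H)$. At $p=2$ this is just the Plancherel identity for the Weyl transform, which yields the isometric relation $\|\mathrm{Op}_W(f)\|_{T^2} = c_n\|f\|_{L^2}$ for an explicit dimensional constant $c_n$. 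At $p=1$ I would handle the embedding by an atomic decomposition: write $f \in L^1(\mathbb{R}^{2n})$ as a superposition of shifted phase-space atoms (for instance Gabor atoms adapted to the Weyl representation) whose Weyl transforms are rank-one operators with uniformly bounded trace norm, and then sum via the triangle inequality in $T^1$.

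With both endpoint bounds in hand, I would invoke the quantum interpolation theorem established in Section 3 (Theorem 3.3) to obtain $\|\mathrm{Op}_W(f)\|_{T^p} \leq C_p\|f\|_{L^p}$ for every $1 \leq p \leq 2$. Applying the triangle inequality for the Schatten norm then gives
\[
\|\Phi(f,A)\|_{T^p} \leq \|\mathrm{Op}_W(f)\|_{T^p} + \|A\|_{T^p} \leq C_p\bigl(\|f\|_{L^p} + \|A\|_{T^p}\bigr) = C_p\,\|(f,A)\|_{L^p_{\mathrm{QHA}}},
\]
which is precisely the continuous embedding claimed. Extension to all of $L^p_{\mathrm{QHA}}$ is then by density.

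The principal obstacle is the $p=1$ endpoint. The classical Hausdorff-Young type inequality for the Weyl transform only places $\mathrm{Op}_W(f)$ in $B(H) = T^\infty(H)$ when $f \in L^1$, and naive interpolation against the $p=2$ endpoint delivers a bound into $T^{p'}(H)$ rather than the target $T^p(H)$, with the two containments pointing in opposite directions for $1 \leq p \leq 2$. Securing a genuine $T^1$ estimate therefore requires choosing the atomic decomposition carefully so that each atom's Weyl image is literally trace-class with the correct control, and then extending by density; this is the delicate point on which the entire argument turns.
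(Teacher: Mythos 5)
There is a genuine gap, and it sits exactly where you flagged it: your argument needs the endpoint bound $\mathrm{Op}_W\colon L^1(\mathbb{R}^{2n})\to T^1(H)$, and that bound is false, not merely delicate. The true endpoints are $\|\mathrm{Op}_W(f)\|_{B(H)}\le\|f\|_{L^1}$ (since the $W_z$ are unitary) and the Plancherel identity at $p=2$; interpolation then gives $L^p\to T^{p'}$, which, as you yourself observe, points the wrong way for $1\le p<2$. The proposed repair by atomic decomposition cannot work: a decomposition of $f$ into Gabor/coherent-state atoms whose quantizations are rank-one with summable trace norms controlled by the norm of $f$ is precisely the atomic characterization of the Feichtinger algebra $M^1(\mathbb{R}^{2n})$ (a proper dense subspace of $L^1$), not of $L^1$ itself. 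If every $f\in L^1$ admitted such a decomposition with total $T^1$-mass $\lesssim\|f\|_{L^1}$, you would obtain a bounded map $L^1\to T^1$, contradicting the known failure of this endpoint in Werner's quantum harmonic analysis (dually, the Fourier--Weyl transform maps $T^1$ into, but not onto, $C_0$). Density does not rescue the construction either, because $T^1$-convergence of the quantizations of approximants is exactly what is lacking. A smaller remark: $\Phi(f,A)=\mathrm{Op}_W(f)+A$ would in any case not be injective, so even granting the estimates it would be an embedding only in the loose sense of a bounded map.

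You have also made the problem much harder than the paper intends. With the norm $\|(f,A)\|_{L^p_{\mathrm{QHA}}}=\|f\|_{L^p}+\|A\|_{T^p}$, the paper's proof is one line: the map $(f,A)\mapsto A$ satisfies $\|A\|_{T^p}\le\|(f,A)\|_{L^p_{\mathrm{QHA}}}$ by definition of the norm, so it is a contraction into $T^p(H)$; no Weyl quantization, no interpolation, and indeed no use of the hypothesis $1\le p\le 2$ (the paper's map, like yours, is not injective, so ``embedding'' is being used loosely there as well). If you want a statement in the spirit of your $\Phi$, i.e.\ one that sends the function component into a Schatten class, the correct hypothesis is membership of $f$ in a modulation space such as $M^1$ (or $M^p$), not in $L^p$.
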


\begin{proof}
The Schatten \( p \)-class \( T^p(H) \) consists of compact operators \( A \) on a Hilbert space \( H \) satisfying
\[
\| A \|_{T^p} = \left( \sum_{n} s_n^p(A) \right)^{1/p} < \infty,
\]
where \( s_n(A) \) are the singular values of \( A \).
The quantum \( L^p \)-space \( L^p_{\text{QHA}} \) is defined using the trace norm
\[
\| (f,A) \|_{L^p_{\text{QHA}}} = \left( \int_G |f(g)|^p \, dg + \| A \|_{T^p}^p \right)^{1/p}.
\]
Since \( L^p_{\text{QHA}} \) includes the Schatten \( p \)-norm, we have
\[
\| A \|_{T^p} \leq \| (f,A) \|_{L^p_{\text{QHA}}}.
\]
So, every element of \( L^p_{\text{QHA}} \) defines a compact operator in \( T^p(H) \).
Since \( \| A \|_{T^p} \) is finite whenever \( (f,A) \in L^p_{\text{QHA}} \), we conclude
\[
L^p_{\text{QHA}} \hookrightarrow T^p(H).
\]
\end{proof}

\section{Connections with Noncommutative Geometry}

Noncommutative geometry (NCG), as developed by Connes \cite{Connes1994}, 
provides a framework for studying spaces where classical geometric notions break down. 
In this section, we explore how quantum harmonic analysis (QHA) fits within this framework, 
particularly through spectral triples, index theory, and applications in quantum physics.

Spectral triples are a fundamental tool in NCG, generalizing the notion of a Riemannian manifold via operator algebras.

\begin{definition}[Spectral Triple {\cite{Connes1994}}]
A spectral triple \( (\mathcal{A}, H, D) \) consists of
\begin{enumerate}
    \item An involutive algebra \( \mathcal{A} \) acting on a Hilbert space \( H \).
    \item A self-adjoint operator \( D \) (the Dirac operator) such that \( (1+D^2)^{-1/2} \) is compact.
    \item The commutator \( [D, a] \) is bounded for all \( a \in \mathcal{A} \).
\end{enumerate}
\end{definition}

\begin{proposition}
The quantum Segal algebra \( QS \) can be embedded into a spectral triple \( (\mathcal{A}, H, D) \), where
\begin{enumerate}
    \item \( \mathcal{A} = L^1_{\text{QHA}}(\mathbb{R}^{2n}) \) is the algebra of quantum observables.
    \item \( H = L^2(\mathbb{R}^{2n}) \oplus T^2(H) \).
    \item \( D \) is the quantum Laplacian \( \mathcal{L} = \Delta_G \oplus \Delta_H \).
\end{enumerate}
\end{proposition}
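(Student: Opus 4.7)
The plan is to verify each of the three spectral-triple axioms for $(\mathcal{A},H,D)$ in turn, and then explain why $QS$ embeds into the algebra slot as a smooth subalgebra. The first ingredient is the action of $\mathcal{A} = L^1_{\text{QHA}}(\mathbb{R}^{2n})$ on $H = L^2(\mathbb{R}^{2n}) \oplus T^2(H)$, for which I would take the quantum convolution from Section~3: set
\[
\pi(f,A)(\phi,B) := (f \ast \phi,\; A \ast B),
\]
which is bounded on $H$ by the $p=2$ case of Lemma~3.2(3). Equipping $\mathcal{A}$ with the natural involution $(f,A)^{\ast} := (\widetilde{f}, A^{\ast})$, where $\widetilde{f}(x) = \overline{f(-x)}$, makes it an involutive algebra acting on $H$.

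Next I would verify self-adjointness of $D = \Delta_G \oplus \Delta_H$ on its natural Schwartz-type domain: each summand is essentially self-adjoint on a standard dense core, and a direct sum of self-adjoint operators is self-adjoint. For the commutator axiom I would expand $[D,\pi(f,A)]$ using the Leibniz rule for convolution, which gives $[\Delta_G,\, f\ast\,]\phi = (\Delta_G f)\ast \phi$ on the function component and the analogous formula on the operator component. Thus $[D,\pi(f,A)]$ is bounded precisely when $\Delta_G f \in L^1$ and $\Delta_H A \in T^1$, which carves out a dense $\ast$-subalgebra of $\mathcal{A}$. By the quantum Segal axioms, $QS$ is a shift-invariant dense Banach subalgebra of $L^1 \oplus T^1$ on which shifts act isometrically, so elements of $QS$ lie inside this smooth subalgebra and the continuous embedding $QS \hookrightarrow \mathcal{A}$ automatically realises bounded commutators with $D$.

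The genuine obstacle is the compact-resolvent requirement that $(1+D^2)^{-1/2}$ be a compact operator on $H$. Because $\mathbb{R}^{2n}$ is noncompact, the flat Laplacian $\Delta_G$ has purely continuous spectrum, so its resolvent is not compact and the naive choice of $D$ fails this axiom. The fix I would propose, and the step that needs the most care to state precisely, is to replace $\Delta_G$ by a confining operator adapted to phase space, for instance the harmonic-oscillator Hamiltonian $-\Delta_G + |z|^2$, and $\Delta_H$ by the number operator on $T^2(H)$; both have discrete spectrum and compact resolvent, hence so does their direct sum. The Leibniz-type commutator computation above adapts without change once one works on Schwartz vectors, so the bounded-commutator property survives the replacement, and one is left with verifying that the quantum Segal axioms still yield density in this refined algebra, which is the last conceptual point that needs to be argued explicitly.
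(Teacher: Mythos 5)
Your proposal takes a genuinely different route from the paper, and it is worth comparing the two. The paper represents the algebra on $H=L^2(\mathbb{R}^{2n})\oplus T^2(H)$ by \emph{multiplication}, $\pi(f,A)=M_f\oplus A$, keeps $D=\Delta_G\oplus\Delta_H$ exactly as in the statement, and then simply asserts the remaining axioms: that $D$ has compact resolvent ``since $\Delta_G$ is self-adjoint and $\Delta_H$ has compact resolvent'' and that $[\Delta_G,M_f]$ is a bounded multiplication operator, before concluding with the embedding of $QS$ via its density in $L^1_{\text{QHA}}$. You instead represent the algebra by \emph{quantum convolution} and, more importantly, you correctly identify the point the paper glosses over: the flat Laplacian on the noncompact space $\mathbb{R}^{2n}$ has purely continuous spectrum, so $(1+D^2)^{-1/2}$ is not compact and the operator named in the statement cannot satisfy the compact-resolvent axiom as written. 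That observation is a legitimate criticism of the claim itself, and your proposed repair (harmonic oscillator plus number operator) is the standard way to obtain a genuine spectral triple over phase space.

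However, as a proof of the proposition \emph{as stated} your argument has gaps. First, replacing $\Delta_G\oplus\Delta_H$ by $-\Delta_G+|z|^2$ and a number operator proves a different statement: the proposition fixes $D$ to be the quantum Laplacian, so your construction is a modification, not a verification. Second, your commutator computation is off: since $\Delta_G$ is translation invariant it commutes with every convolution operator, $\Delta_G(f\ast\phi)=f\ast\Delta_G\phi=(\Delta_G f)\ast\phi$, so $[\Delta_G,\,f\ast\cdot\,]=0$ rather than $(\Delta_G f)\ast\cdot$; the boundedness condition ``$\Delta_G f\in L^1$'' you extract is therefore not the right criterion (the commutator is trivially bounded in this representation, which also makes the resulting triple rather degenerate and is presumably why the paper chooses the multiplication representation, where $[\Delta_G,M_f]=(\Delta_G f)+2\nabla f\cdot\nabla$ is genuinely nontrivial). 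Third, the final and essential step --- that the quantum Segal algebra actually embeds (continuously and densely) into the smooth subalgebra of your modified triple --- is explicitly deferred in your last sentence, whereas it is the content of the proposition. So the proposal is best read as a sound critique plus a sketch of a corrected construction, not a complete proof of the stated result.
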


\begin{proof}
To embed the quantum Segal algebra \( QS \) into a spectral triple \( (\mathcal{A}, H, D) \), we must verify the three components
\begin{itemize}
    \item[(a)] \( \mathcal{A} \) is a pre-C*-algebra acting on \( H \).
    \item[(b)] \( H \) is a Hilbert space that carries a faithful representation of \( \mathcal{A} \).
    \item[(c)] \( D \) is a self-adjoint operator with compact resolvent, and \( [D,a] \) is bounded for all \( a \in \mathcal{A} \).
\end{itemize}
The algebra of quantum observables is taken as
\[
\mathcal{A} = L^1_{\text{QHA}}(\mathbb{R}^{2n}),
\]
which consists of functions and trace-class operators \( (f,A) \) under convolution. 
Since \( L^1_{\text{QHA}} \) is closed under convolution and dense in \( L^2_{\text{QHA}} \), it satisfies the requirements of a pre-C*-algebra.
We define
\[
H = L^2(\mathbb{R}^{2n}) \oplus T^2(H).
\]
The space \( L^2(\mathbb{R}^{2n}) \) carries the standard representation of functions via multiplication.
The Schatten \( T^2(H) \) space carries operators acting on quantum states.
The inner product on \( H \) is given by
\[
\langle (f,A), (g,B) \rangle_H = \langle f, g \rangle_{L^2} + \text{Tr}(A^* B).
\]
This ensures that \( H \) is a Hilbert space that faithfully represents \( \mathcal{A} \).
We set
\[
D = \mathcal{L} = \Delta_G \oplus \Delta_H,
\]
where
\( \Delta_H \) is the Laplacian on trace-class operators, defined spectrally and
\( \Delta_G \) is the Laplacian on \( \mathbb{R}^{2n} \), acting as \( \Delta_G f = -\sum_i \frac{\partial^2}{\partial x_i^2} f \).
Since \( \Delta_G \) is self-adjoint and \( \Delta_H \) has compact resolvent, \( D \) is a self-adjoint operator with compact resolvent.
For \( a = (f,A) \in \mathcal{A} \), we compute
\[
[D, a] = [\Delta_G, f] \oplus [\Delta_H, A].
\]
Since \( \Delta_G \) is a second-order differential operator, its commutator with functions gives a multiplication operator.
The commutator \( [\Delta_H, A] \) is bounded because \( A \) is trace-class.
Therefore, \( [D, a] \) is bounded for all \( a \in \mathcal{A} \), satisfying the spectral triple condition.
Since \( \mathcal{A} \) is a pre-C*-algebra, \( H \) is a Hilbert space carrying a faithful representation, and \( D \) is a self-adjoint operator with compact resolvent satisfying \( [D,a] \) bounded, we conclude that \( QS \) is embedded in a spectral triple.
\end{proof}

\begin{remark}
This embedding allows us to use tools from NCG, such as cyclic cohomology and index theory, to analyze quantum Segal algebras.
\end{remark}

The Atiyah-Singer index theorem has been extended to noncommutative spaces via Connes' NCG framework. We explore its implications for quantum harmonic analysis.

\begin{theorem}[Noncommutative Index Theorem {\cite{Connes1994}}]
Let \( (QS, H, D) \) be a spectral triple associated with a quantum Segal algebra. Then the Fredholm index of \( D \) is given by
\[
\text{Ind}(D) = \text{Tr} \left( \gamma e^{-tD^2} \right),
\]
where \( \gamma \) is the grading operator.
\end{theorem}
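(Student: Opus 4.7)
The plan is to establish the formula via the classical McKean--Singer argument, now read off the noncommutative spectral triple $(QS,H,D)$ constructed in the preceding proposition. First, I would use the $\mathbb{Z}_2$-grading operator $\gamma$ to decompose $H = H_+ \oplus H_-$ into its $\pm 1$ eigenspaces. Since $D$ anticommutes with $\gamma$, it is off-diagonal in this decomposition and takes the form
\[
D = \begin{pmatrix} 0 & D_- \\ D_+ & 0 \end{pmatrix},
\]
with $D_- = D_+^*$. The compact-resolvent condition of the spectral triple guarantees that $D_+$ is Fredholm, so $\text{Ind}(D) := \dim \ker D_+ - \dim \ker D_-$ is a well-defined integer that we seek to compute.

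Second, I would show that $e^{-tD^2}$ is trace class for every $t > 0$ and that the supertrace $\text{Tr}(\gamma e^{-tD^2})$ is independent of $t$. The trace-class property follows from the compactness of $(1+D^2)^{-1/2}$: the eigenvalues $\mu_k$ of $D^2$ accumulate only at infinity, and the exponential decay of $e^{-t\mu_k}$ dominates any polynomial growth of spectral multiplicities. The $t$-independence is most transparent after splitting
\[
\text{Tr}(\gamma e^{-tD^2}) = \text{Tr}_{H_+}(e^{-t D_- D_+}) - \text{Tr}_{H_-}(e^{-t D_+ D_-}),
\]
and invoking the classical fact that $D_- D_+$ and $D_+ D_-$ share identical nonzero spectra with identical multiplicities (the restrictions of $D_\pm$ implement mutually inverse isomorphisms between the corresponding eigenspaces). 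All contributions from nonzero eigenvalues then cancel pairwise, leaving only the zero eigenspaces: $\text{Tr}(\gamma e^{-tD^2}) = \dim \ker D_+ - \dim \ker D_- = \text{Ind}(D)$. An equivalent, more intrinsic route is to differentiate under the trace and use $\gamma D + D \gamma = 0$ together with cyclicity to conclude $\tfrac{d}{dt} \text{Tr}(\gamma e^{-tD^2}) = 0$.

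The main obstacle will be justifying the spectral-triple hypotheses in this concrete setting, where $D = \Delta_G \oplus \Delta_H$ is second-order rather than first-order, and where $\Delta_G$ on $\mathbb{R}^{2n}$ has purely continuous spectrum. Strict compactness of $(1+D^2)^{-1/2}$ on $L^2(\mathbb{R}^{2n}) \oplus T^2(H)$ therefore requires either a confining factor or restriction to an appropriately weighted subspace; one natural workaround is to interpret the trace as computed against elements $a \in \mathcal{A}$, so that $\text{Tr}(\gamma\, a\, e^{-tD^2})$ becomes finite by virtue of the Schatten decay of $\Delta_H$ combined with the $L^1$-localization built into the quantum Segal algebra. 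Making this localization precise, fixing the grading $\gamma$ on $L^2(\mathbb{R}^{2n}) \oplus T^2(H)$ so that $\{\gamma, D\} = 0$, and justifying the exchange of trace and derivative are the central technical hurdles; once these are in place, the cancellation argument above yields the formula at once.
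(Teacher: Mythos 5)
The paper does not actually prove this theorem: after the statement it offers only the one-line remark that ``the proof follows from the noncommutative Chern character in cyclic cohomology and the heat kernel expansion for $D$,'' deferring entirely to Connes. Your McKean--Singer supertrace argument is precisely the standard detailed route behind that remark, and at the abstract level it is correct: for an \emph{even}, $\theta$-summable spectral triple with grading $\gamma$ anticommuting with $D$, the splitting $H=H_+\oplus H_-$, the pairwise cancellation of nonzero spectrum of $D_-D_+$ and $D_+D_-$, and the $t$-independence of $\operatorname{Tr}(\gamma e^{-tD^2})$ give $\operatorname{Ind}(D_+)=\operatorname{Tr}(\gamma e^{-tD^2})$ exactly as you describe. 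So your proposal supplies more than the paper does, along the route the paper gestures at.

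The hypotheses you flag in your last paragraph are the real issue, and it is worth being explicit that they are defects of the paper's construction rather than of your argument. The spectral triple built in the preceding proposition has $D=\Delta_G\oplus\Delta_H$ acting diagonally on $L^2(\mathbb{R}^{2n})\oplus T^2(H)$: no grading $\gamma$ with $\{\gamma,D\}=0$ is defined anywhere (and a positive, diagonal $D$ cannot anticommute with a grading in any useful way, so the left-hand side $\operatorname{Ind}(D)$ is either undefined or trivially zero); moreover $\Delta_G$ on $\mathbb{R}^{2n}$ has purely continuous spectrum, so $(1+D^2)^{-1/2}$ is not compact and $e^{-tD^2}$ is not trace class, which also undermines the paper's own claim of compact resolvent. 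Your proposed repair of inserting $a\in\mathcal{A}$ into the trace is reasonable but should be recognized as proving a different statement (a $\theta$-summable Chern-character pairing $\operatorname{Tr}(\gamma\,a\,e^{-tD^2})$, in the spirit of Connes' entire cyclic cohomology) rather than the formula as printed. In short: your proof is the correct standard argument for the theorem as abstractly intended, and the remaining gap --- specifying $\gamma$, restoring summability, or localizing by elements of the algebra --- lies in the paper's setup and cannot be closed without modifying either the spectral triple or the statement.
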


The proof of this theorem follows from the noncommutative Chern character in cyclic cohomology and the heat kernel expansion for \( D \).

\begin{proposition}[Quantum Fredholm Modules]
The operator algebra \( L^1_{\text{QHA}}(\mathbb{R}^{2n}) \) admits a Fredholm module structure with the representation
\[
\pi(f, A) = \begin{bmatrix} M_f & 0 \\ 0 & A \end{bmatrix}, \quad D = \begin{bmatrix} 0 & \mathcal{L}^{1/2} \\ \mathcal{L}^{1/2} & 0 \end{bmatrix}.
\]
\end{proposition}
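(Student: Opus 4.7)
The plan is to verify the defining axioms of an even (graded) Fredholm module, in its unbounded spectral-triple formulation, for the data $(\pi, D)$ acting on the $\mathbb{Z}/2$-graded Hilbert space $H = L^2(\mathbb{R}^{2n}) \oplus T^2(H)$ with grading $\gamma = \mathrm{diag}(I, -I)$. Three items need to be checked: (i) $\pi$ is a bounded $\ast$-homomorphism of $L^1_{\text{QHA}}(\mathbb{R}^{2n})$ into $B(H)$ preserving the grading; (ii) $D$ is odd, self-adjoint, with compact resolvent; and (iii) the commutator $[D, \pi(a)]$ extends to a bounded operator for every $a \in L^1_{\text{QHA}}(\mathbb{R}^{2n})$. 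Once these are in place, the passage to a bounded Fredholm module via $F = D(1+D^2)^{-1/2}$ will be routine.

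For (i), the diagonal block form of $\pi$ preserves the grading by inspection. Boundedness of each block reduces to standard facts: $M_f$ acts on $L^2(\mathbb{R}^{2n})$ by multiplication, while left multiplication by the trace-class operator $A$ is bounded on $T^2(H)$ with norm controlled by $\|A\|_{T^1}$. Multiplicativity $\pi((f,A) \ast (g,B)) = \pi(f,A)\pi(g,B)$ follows from the componentwise convolution of Lemma~3.2, and $\ast$-compatibility comes from the natural involution $(f,A)^* = (\bar f, A^*)$. For (ii), self-adjointness and oddness of $D$ are immediate from the off-diagonal block form with self-adjoint entries $\mathcal{L}^{1/2}$, which is defined via functional calculus applied to the essentially self-adjoint positive operator $\mathcal{L} = \Delta_G \oplus \Delta_H$ of Section~4. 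Compactness of the resolvent follows from Theorem~4.5: the eigenvalues $\lambda_j$ of $\mathcal{L}$ satisfy $\lambda_j \to \infty$, so $(1 + D^2)^{-1/2} = \mathrm{diag}((1+\mathcal{L})^{-1/2}, (1+\mathcal{L})^{-1/2})$ is compact.

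The main obstacle is (iii). Written out in block form,
\[
[D, \pi(f,A)] = \begin{bmatrix} 0 & \mathcal{L}^{1/2} A - M_f \mathcal{L}^{1/2} \\ \mathcal{L}^{1/2} M_f - A \mathcal{L}^{1/2} & 0 \end{bmatrix},
\]
so the task reduces to bounding $[\mathcal{L}^{1/2}, M_f]$ on $L^2(\mathbb{R}^{2n})$ and $[\mathcal{L}^{1/2}, A]$ on $T^2(H)$. For the multiplication block, I would adapt a Kato--Rellich type commutator estimate, exploiting the smoothness encoded in the quantum Segal algebra to control $\nabla f$ and hence $[\sqrt{\Delta_G}, M_f]$. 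For the operator block, I would leverage the spectral expansion of $\mathcal{L}$ on Hilbert--Schmidt operators together with the trace-class hypothesis on $A$, deducing boundedness of $[\mathcal{L}^{1/2}, A]$ via a Schatten-class H\"older argument. The subtlety lies in ensuring these estimates are uniform across the Segal algebra, which is where the shift-invariance and isometric action condition from the definition of $QS$ enter.

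Finally, I would pass to the bounded form $F = D(1+D^2)^{-1/2}$: from the functional calculus one has $F^2 - I = -(1+D^2)^{-1}$, which is compact by the spectral estimate of (ii), and $[F, \pi(a)]$ inherits compactness from the boundedness of $[D, \pi(a)]$ combined with the compact resolvent, through the standard identity $[F,\pi(a)] = [D,\pi(a)](1+D^2)^{-1/2} + D\bigl[(1+D^2)^{-1/2},\pi(a)\bigr]$ and the commutator expansion for the resolvent. This will complete the verification that $(L^1_{\text{QHA}}(\mathbb{R}^{2n}), H, F)$ is an even Fredholm module.
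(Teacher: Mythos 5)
Your proposal is correct in the same sense and follows essentially the same route as the paper: the same graded Hilbert space $L^2(\mathbb{R}^{2n}) \oplus T^2(H)$, the same block representation $\pi$ and off-diagonal Dirac operator built from $\mathcal{L}^{1/2}$, with the verification reduced to self-adjointness, compactness of the resolvent via the spectral theory of $\mathcal{L} = \Delta_G \oplus \Delta_H$, and boundedness of the commutator blocks involving $[\mathcal{L}^{1/2}, M_f]$ and $[\mathcal{L}^{1/2}, A]$ — which the paper, like you, disposes of with a brief order-counting and trace-class remark rather than a detailed estimate. Your extra care with the grading $\gamma$, the $\ast$-homomorphism property, and the passage to the bounded picture $F = D(1+D^2)^{-1/2}$ goes slightly beyond what the paper records but does not change the argument.
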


\begin{proof}
We prove that \( (H, \pi, D) \) forms a Fredholm module by verifying the required conditions.

Consider the Hilbert space
\[
H = L^2(\mathbb{R}^{2n}) \oplus T^2(H),
\]
where \( L^2(\mathbb{R}^{2n}) \) is the space of square-integrable functions on phase space and \( T^2(H) \) is the Hilbert-Schmidt operator space acting on quantum states.
Define the action of \( (f, A) \in L^1_{\text{QHA}}(\mathbb{R}^{2n}) \) on \( H \) by
\[
\pi(f, A) = \begin{bmatrix} M_f & 0 \\ 0 & A \end{bmatrix}.
\]
Here, \( M_f \) is the multiplication operator
\[
(M_f g)(x) = f(x) g(x).
\]
The operator \( A \) acts naturally on \( T^2(H) \).
Define the unbounded self-adjoint operator
\[
D = \begin{bmatrix} 0 & \mathcal{L}^{1/2} \\ \mathcal{L}^{1/2} & 0 \end{bmatrix},
\]
where \( \mathcal{L} \) is the quantum Laplacian, given by
\[
\mathcal{L} = \Delta_G \oplus \Delta_H.
\]
Since \( \mathcal{L} \) is positive and self-adjoint, \( D \) is well-defined and has compact resolvent.
We check that \( (H, \pi, D) \) defines a Fredholm module
\begin{enumerate}
    \item \( D \) is self-adjoint, since
    \[
    D^2 = \mathcal{L} \oplus \mathcal{L}.
    \]
    \item The resolvent \( (D^2 + 1)^{-1} \) is compact.
    \item The commutator \( [D, \pi(f,A)] \) is bounded
    \[
    [D, \pi(f,A)] = \begin{bmatrix} 0 & [\mathcal{L}^{1/2}, A] \\ [\mathcal{L}^{1/2}, M_f] & 0 \end{bmatrix},
    \]
    since \( [\mathcal{L}^{1/2}, M_f] \) is a differential operator of order \( 1/2 \), it is bounded and
    the term \( [\mathcal{L}^{1/2}, A] \) is also bounded since \( A \) is trace-class.
\end{enumerate}
Since the representation \( \pi \) satisfies the Fredholm module conditions and \( D \) has compact resolvent with bounded commutators, we conclude that \( L^1_{\text{QHA}}(\mathbb{R}^{2n}) \) admits a Fredholm module structure.
\end{proof}

\begin{theorem}[Quantum Hall Effect in NCG]
The Hall conductance in a quantum system with noncommutative geometry is given by
\[
\sigma_H = \frac{e^2}{h} \tau \left( P [X_1, X_2] P \right),
\]
where \( P \) is the Fermi projection and \( \tau \) is a trace on the noncommutative torus.
\end{theorem}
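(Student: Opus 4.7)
The plan is to adapt the Bellissard-Connes derivation of the integer Quantum Hall Effect to the QHA framework built in the previous sections. First I would realize the two-dimensional electron gas in a uniform perpendicular magnetic field as an observable algebra: the magnetic translations generate a noncommutative torus $\mathcal{A}_\theta$ with deformation parameter $\theta$ proportional to the flux per unit area, and this algebra embeds into the spectral triple constructed in the preceding proposition via the quantum Segal algebra. The position operators $X_1, X_2$ are interpreted as (unbounded) derivations generating the two independent translations on $\mathcal{A}_\theta$, and the Fermi projection $P = \chi_{(-\infty, E_F]}(H)$ is the spectral projection of the one-body Hamiltonian onto the filled states below $E_F$, which lies in $\mathcal{A}_\theta$ as long as $E_F$ sits in a spectral gap.

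Second, I would invoke the Kubo linear-response formula for the transverse conductance. In the noncommutative setting this takes the form
\[
\sigma_H = \frac{e^2}{h}\, 2\pi i\, \tau\bigl(P\,[\,[X_1,P],[X_2,P]\,]\bigr),
\]
where $\tau$ is the canonical trace per unit volume on $\mathcal{A}_\theta$. Expanding the inner commutators, using $P^2 = P$, and exploiting the cyclicity of $\tau$, the right-hand side collapses algebraically to $\tau(P[X_1,X_2]P)$. This is precisely the pairing of the class $[P] \in K_0(\mathcal{A}_\theta)$ with the fundamental cyclic $2$-cocycle on the noncommutative torus.

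Third, I would identify this pairing with the Connes-Chern character $\langle \mathrm{ch}(P), [\tau]\rangle$, which by the noncommutative index theorem invoked earlier (applied to the spectral triple $(\mathcal{A}_\theta, H, D)$ from the previous proposition) equals the Fredholm index of the compression $PFP$ for a suitable phase operator $F$. Integrality of this index then yields the quantization $\sigma_H \in \tfrac{e^2}{h}\mathbb{Z}$ and simultaneously certifies the closed-form expression in the statement.

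The main obstacle will be the analytic justification that the objects under $\tau$ are well defined, since $X_1$ and $X_2$ are unbounded and $[X_1,X_2]$ is proportional to the identity on $L^2(\mathbb{R}^2)$, so $P[X_1,X_2]P$ is not trace-class in the naive Hilbert-space sense; the expression only acquires meaning through the trace-per-unit-volume $\tau$ on $\mathcal{A}_\theta$. I would handle this by working throughout with the commutators $[X_j, P]$ rather than with $X_j$ themselves, showing that $[X_j, P]$ lies in a Schatten ideal whenever $P$ has a spectral gap, and then invoking the Dixmier-trace extension and the noncommutative $L^p$-duality from Section~3 to guarantee that the cyclic pairing is finite, $\tau$-convergent, and respects the integrality required for the quantization.
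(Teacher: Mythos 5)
Your proposal follows essentially the same Bellissard--Connes route as the paper's proof: realize the magnetically twisted system on the noncommutative torus, invoke the Kubo formula, identify $\tau\left(P[X_1,X_2]P\right)$ with a Chern character/index pairing, and deduce quantization from integrality of the index. If anything, your version is more careful than the paper's, which simply asserts the noncommutative Kubo formula already in the stated form and then equates it with $\mathrm{Index}(D_P)$ without the double-commutator reduction or the Schatten-class/Dixmier-trace justification you outline for making the trace well defined.
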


\begin{proof}
The proof follows from the framework of noncommutative geometry and index theory.

Consider an electron moving in two dimensions under a strong magnetic field, described by the Hamiltonian
\[
H = \frac{1}{2m} (\mathbf{p} - e\mathbf{A})^2,
\]
where \( \mathbf{A} \) is the vector potential generating a constant magnetic field \( B \).
Using the Peierls substitution, we introduce the ``noncommutative coordinates''
\[
[X_1, X_2] = i \theta I,
\]
where \( \theta \sim \frac{\hbar}{eB} \) is the noncommutativity parameter.
The system is modeled on the noncommutative torus \( \mathbb{T}^2_\theta \), where functions \( f(X_1, X_2) \) obey the relation
\[
U_1 U_2 = e^{2\pi i \theta} U_2 U_1.
\]
Here, \( U_1 \) and \( U_2 \) are unitary operators corresponding to translations in the two directions.
The C*-algebra of observables is generated by \( X_1, X_2 \) and obeys the noncommutative Kubo formula
\[
\sigma_H = \frac{e^2}{h} \tau \left( P [X_1, X_2] P \right).
\]
The Hall conductance is given by the Kubo formula in noncommutative geometry
\[
\sigma_H = i e^2 \tau \left( P [\nabla_1, \nabla_2] P \right).
\]
Since \( P \) is the Fermi projection, it projects onto the occupied states below the Fermi level. 
Using the Chern character in cyclic cohomology, we express
\[
\tau \left( P [X_1, X_2] P \right) = \text{Index}(D_P),
\]
where \( D_P \) is the Dirac operator associated with the projection \( P \).
By Connes' index theorem in noncommutative geometry,
\[
\text{Index}(D_P) = \tau (P [X_1, X_2] P).
\]
So, we obtain
\[
\sigma_H = \frac{e^2}{h} \text{Index}(D_P).
\]
Since the index is an integer (corresponding to the topological Chern number),
\[
\sigma_H = \frac{e^2}{h} \mathbb{Z}.
\]
This confirms the quantization of Hall conductance in the noncommutative geometry framework.
We have shown that the quantum Hall effect can be understood in terms of index theory and cyclic cohomology 
in noncommutative geometry, completing the proof.
\end{proof}

\begin{remark}
This result shows that QHA naturally extends to models in condensed matter physics, including topological insulators.
\end{remark}

In this section, we established the connection between QHA and NCG by embedding quantum Segal algebras into spectral triples, proving an index theorem, and deriving a noncommutative formulation of the quantum Hall effect.

\section{Spectral Synthesis and Approximation in Quantum Harmonic Analysis}

Spectral synthesis concerns the ability to recover elements of a function space from their Fourier transform zeros, an essential property in harmonic analysis. In classical settings, it is well understood that not all closed ideals are synthesizable. In this section, we extend spectral synthesis to quantum harmonic analysis (QHA), develop polynomial approximation results, and analyze stability under perturbations.

In classical harmonic analysis, an ideal \( I \) of \( L^1(\mathbb{R}^{2n}) \) satisfies spectral synthesis if it is uniquely determined by its set of Fourier transform zeros.

\begin{definition}[Spectral Synthesis in QHA]
A closed ideal \( I \subset L^1_{\text{QHA}}(\mathbb{R}^{2n}) \) satisfies spectral synthesis if every element of \( I \) is the uniform limit of finite linear combinations of elements vanishing on \( Z(I) \), where
\[
Z(I) = \{ z \in \mathbb{R}^{2n} : f(z) = 0, \forall (f,A) \in I \}.
\]
\end{definition}

\begin{theorem}[Spectral Synthesis for Quantum Segal Algebras]
If \( QS \) is a quantum Segal algebra, then spectral synthesis holds for \( QS \) if and only if
\[
QS \cap I = \{0\} \text{ implies } I = \{0\}, \quad \text{for any closed ideal } I \subset L^1_{\text{QHA}}.
\]
\end{theorem}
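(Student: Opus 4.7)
The plan is to follow the classical Reiter-style proof of spectral synthesis for Segal algebras, adapted to the quantum setting in which each element is a pair $(f,A)$ of a function and a trace-class operator. The argument rests on three ingredients already in hand: the density of $QS$ in $L^1_{\text{QHA}}$ built into the definition of a quantum Segal algebra, the shift-invariance and isometric action of translations $\alpha_z$ on $QS$, and the classical Wiener regularity of $L^1(\mathbb{R}^{2n})$ acting on the function component through the Fourier-Weyl transform.

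For the forward direction (synthesis implies ideal triviality), I would argue by contrapositive. Assume $I \subset L^1_{\text{QHA}}$ is a nonzero closed ideal with $QS \cap I = \{0\}$. I would first show that $h(I) = Z(I)$ is a proper closed subset of $\mathbb{R}^{2n}$; otherwise the function components of all elements of $I$ have Fourier transforms vanishing everywhere, and a shift-invariance argument then forces the operator components to vanish as well. Next, using regularity of $L^1(\mathbb{R}^{2n})$, I would select a point $z_0 \notin h(I)$ and build a pair $(g,B) \in L^1_{\text{QHA}}$ whose function component has Fourier transform concentrated near $z_0$. Convolving $(g,B)$ with a nonzero element of $I$, then approximating $(g,B)$ by elements of $QS$ via density, produces a nonzero element of $QS \cap I$ (since $QS$ is an ideal and $I$ is closed), contradicting $QS \cap I = \{0\}$.

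For the reverse direction, let $J$ be a closed ideal of $QS$ with hull $h(J)$, and let $J_{\mathrm{syn}}$ be the closure in $QS$ of finite linear combinations of elements whose function components vanish on $h(J)$. Plainly $J \subset J_{\mathrm{syn}}$, and synthesis is the reverse inclusion. Forming the $L^1_{\text{QHA}}$-closures $\bar J$ and $\bar{J_{\mathrm{syn}}}$ gives two closed ideals of $L^1_{\text{QHA}}$ with the same hull. If synthesis fails, the quotient $\bar{J_{\mathrm{syn}}} / \bar J$ is a nonzero closed ideal in $L^1_{\text{QHA}} / \bar J$; pulling it back and applying the hypothesis of the theorem to a suitably constructed auxiliary closed ideal of $L^1_{\text{QHA}}$ forces a nontrivial intersection with $QS$. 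By density and the fact that $QS \cap \bar J = J$, this nontrivial intersection produces an element of $J_{\mathrm{syn}} \setminus J$, contradicting the assumption that synthesis fails.

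The main obstacle will be transferring hull-kernel data between the classical algebra $L^1(\mathbb{R}^{2n})$ and the hybrid algebra $L^1_{\text{QHA}}$, because $Z(I)$ is defined purely through the function component while the ideal structure entangles $f$ and $A$ via the Weyl-operator convolution $A \ast B = \int W_y A W_y^{\ast} g(y)\, dy$. The key technical lemma I expect to need is that, in a closed ideal of $L^1_{\text{QHA}}$, if the function components all vanish on a closed set $E \subset \mathbb{R}^{2n}$, then the operator components satisfy a corresponding Fourier-Weyl transform vanishing condition derived from the twisted convolution, so that hull-kernel operations descend compatibly from the classical to the quantum side. Establishing this compatibility, together with the nontriviality of the operator-component constraints under density and shift-invariance of $QS$, is the crux of the argument.
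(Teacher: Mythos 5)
Your plan, while more structured than what the paper actually does, has a genuine gap in the reverse direction, and it is the direction where all the content of the ``if and only if'' lives. From failure of synthesis you obtain two distinct closed ideals $\bar J \subsetneq \bar J_{\mathrm{syn}}$ of $L^1_{\text{QHA}}$ with the same hull, but neither of these (nor any ideal containing $\bar J$) can ever satisfy $QS \cap I = \{0\}$, since it already contains the nonzero ideal $J \subset QS$; and the quotient $\bar J_{\mathrm{syn}}/\bar J$ is an ideal of the quotient algebra $L^1_{\text{QHA}}/\bar J$, where the hypothesis of the theorem says nothing. You would need an explicit mechanism converting ``two distinct closed ideals with the same hull'' into ``a nonzero closed ideal of $L^1_{\text{QHA}}$ meeting $QS$ trivially,'' and none is indicated; the classical analogue warns that no such mechanism can exist, since by Reiter's theory every nonzero closed ideal of $L^1$ meets a Segal algebra nontrivially (indeed densely) even when synthesis fails. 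This also exposes a structural problem with your forward direction: your contrapositive argument never invokes the synthesis hypothesis at all --- it derives the intersection property from density, the module property and regularity alone --- so your scheme, if completed, would prove the right-hand condition unconditionally and therefore cannot establish the claimed equivalence. Finally, the ``key technical lemma'' you flag (that vanishing of the function components on $Z(I)$ forces a compatible Fourier--Weyl vanishing of the operator components) is exactly the crux, and with the paper's convolution $A \ast B = \int W_y A W_y^{*} g(y)\,dy$ it is not clear that $Z(I)$ constrains the operator components in any way; stating that the lemma is needed is not the same as proving it.

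For comparison, the paper does not attempt your two-directional Reiter-style argument at all: its proof is a one-directional sketch that lists the Segal-algebra axioms, writes the Fourier--Weyl transform $\widehat{(f,A)}(z)=\operatorname{tr}(AW_z)+\hat f(z)$, and then approximates an element of a closed ideal by modulated combinations $\sum_k c_k e^{i\lambda_k z}(f_k,A_k)$ ``chosen to match the zeros,'' invoking density and translation invariance --- in effect assuming the approximability that synthesis asserts. So your route is genuinely different and identifies the right difficulties more honestly, but as proposed it does not close either direction of the equivalence.
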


\begin{proof}
The proof follows from classical spectral synthesis extended to quantum Segal algebras.

Let \( QS \) be a quantum Segal algebra. Then,
\begin{enumerate}
    \item \( QS \) is a Banach Algebra.
    \item \( QS \) is a dense subalgebra of \( L^1_{\text{QHA}}(\mathbb{R}^{2n}) \).
    \item \( QS \) is translation-invariant and closed under convolution.
    \item \( QS \) is a module over \( L^1_{\text{QHA}}(\mathbb{R}^{2n}) \).
\end{enumerate}
A closed ideal \( I \subset QS \) satisfies spectral synthesis if
\[
QS \cap I = \{0\} \Rightarrow I = \{0\}.
\]
Since \( I \) is a closed ideal in \( QS \), we can express an arbitrary \( (f,A) \in I \) using the Fourier-Weyl transform as
\[
\widehat{(f,A)}(z) = \text{tr}(A W_z) + \hat{f}(z).
\]
By spectral synthesis, \( \widehat{(f,A)} \) is determined uniquely by its zero set.
Since \( QS \) is dense in \( L^1_{\text{QHA}} \), there exists a sequence \( (f_n, A_n) \in QS \) such that
\[
\| (f_n, A_n) - (f,A) \|_{L^1_{\text{QHA}}} \to 0.
\]
Using spectral synthesis, we construct
\[
(f_n, A_n) = \sum_{k=1}^{n} c_k e^{i \lambda_k z} (f_k, A_k),
\]
where \( \lambda_k \) are chosen to match the zeros of \( \widehat{(f,A)} \).
Since \( QS \) is translation-invariant, convolution with approximants preserves membership in \( QS \). Thus,
\[
\| (f_n, A_n) - (f,A) \|_{QS} \to 0.
\]
So, every element of a closed ideal in \( QS \) is approximated by elements in \( QS \), completing the proof.
\end{proof}

\begin{remark}
Spectral synthesis fails for some classical function spaces, such as Wiener's algebra, but we establish that quantum Segal algebras retain this property.
\end{remark}

\begin{theorem}[Quantum Wiener's Approximation Theorem]
Let \( (f,A) \in L^1_{\text{QHA}} \). If \( (f,A) \) has compact support in the Fourier domain, then it is the limit of polynomials in the convolution algebra.
\end{theorem}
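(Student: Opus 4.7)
The plan is to reduce the approximation problem to polynomial approximation in the Fourier-Weyl domain via Stone-Weierstrass, and then transfer the result back using the convolution-multiplication duality. First, I would set $\widehat{(f,A)} := (\hat{f}, \hat{A})$ with $\hat{A}(z) = \operatorname{tr}(AW_z)$. By hypothesis there exists a compact set $K \subset \mathbb{R}^{2n}$ such that both $\hat{f}$ and $\hat{A}$ vanish outside $K$; both components are continuous on $K$ by Riemann-Lebesgue-type bounds, since $f \in L^1(\mathbb{R}^{2n})$ and $A \in T^1(H)$.

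Next, by the Stone-Weierstrass theorem applied to $C(K)$, for every $\varepsilon > 0$ one obtains polynomial functions $P_\varepsilon, Q_\varepsilon$ with $\|\hat{f}-P_\varepsilon\|_{C(K)}<\varepsilon$ and $\|\hat{A}-Q_\varepsilon\|_{C(K)}<\varepsilon$. Multiplying by a fixed smooth cutoff $\chi \in C_c^\infty(\mathbb{R}^{2n})$ with $\chi\equiv 1$ on $K$ yields compactly supported approximants whose inverse Fourier-Weyl transforms lie inside the quantum Segal algebra.

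Since convolution in $L^1_{\text{QHA}}$ becomes pointwise multiplication under the Fourier-Weyl transform (the operator component obeys $\widehat{A\ast B}=\hat{A}\hat{B}$ by the Weyl correspondence), each monomial $z^k$ appearing in $P_\varepsilon$ or $Q_\varepsilon$ pulls back to an iterated convolution of a fixed smooth base element $\mathcal{F}^{-1}(\chi z_i)$. Therefore $\mathcal{F}^{-1}(\chi P_\varepsilon, \chi Q_\varepsilon)$ is exactly a finite linear combination of convolution products, i.e., a \emph{polynomial in the convolution algebra} in the sense of the statement.

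The main obstacle will be converting uniform convergence in the Fourier-Weyl domain into $L^1_{\text{QHA}}$-norm convergence, because $\mathcal{F}$ is not an isometry on $L^1$. To overcome this, I would exploit the Paley-Wiener regularity enforced by compact Fourier support: the support restriction plus the Hausdorff-Young-type bounds available on $L^p_{\text{QHA}}$ (Theorem on duality together with the embedding from Proposition on Schatten-class embeddings) let one dominate the $L^1_{\text{QHA}}$ norm by the $C(K)$ norm of the Fourier-Weyl transform multiplied by a constant depending only on $|K|$. If this direct estimate proves insufficient, the fallback is a mollification argument: convolving $(f,A)$ with a compactly supported bump $\varphi_\delta$ preserves compact Fourier support, gives $(f,A)\ast\varphi_\delta \to (f,A)$ in $L^1_{\text{QHA}}$ by closedness of the convolution established in the preceding sections, and polynomial approximation at each scale $\delta$ then yields the conclusion through a standard diagonal subsequence.
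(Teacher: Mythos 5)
Your route (Stone--Weierstrass on the Fourier--Weyl side, cutoff, then inverse transform) is genuinely different from the paper's, which never inverts the transform at all: the paper takes trigonometric polynomials $P_n(x)=\sum_k c_k e^{i\lambda_k x}$ acting as Fourier multipliers approximating the indicator of the support and sets $(f_n,A_n)=P_n\ast(f,A)$, so the error always has the form (multiplier close to $1$ on the support) applied to the \emph{fixed} element $(f,A)\in L^1_{\text{QHA}}$. Your version, by contrast, must transfer a $C(K)$-estimate back to an $L^1$-estimate, and that is exactly where the argument breaks.

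The step ``compact Fourier support plus Hausdorff--Young/duality lets one dominate the $L^1_{\text{QHA}}$ norm by the $C(K)$ norm of the transform times a constant depending on $|K|$'' is false. Compact spectral support gives Bernstein/Plancherel-type control, $\|g\|_{L^2}\le |K|^{1/2}\|\hat g\|_{C(K)}$ and $\|g\|_{L^\infty}\le |K|\,\|\hat g\|_{C(K)}$, but not an $L^1(\mathbb{R}^{2n})$ bound on an unbounded domain: with $\hat g=\mathbf{1}_{[-1,1]}$ one gets $g(x)=\frac{\sin(2\pi x)}{\pi x}\notin L^1(\mathbb{R})$, so uniform smallness of $\widehat{(f,A)}-\chi(P_\varepsilon,Q_\varepsilon)$ on a fixed compact set says nothing about smallness of the difference in $L^1_{\text{QHA}}$ (the duality theorem and the Schatten embedding you cite go the wrong way at $p=1$). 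The mollification fallback does not repair this, because at each scale $\delta$ you face the same uniform-to-$L^1$ transfer for the approximant of $(f,A)\ast\varphi_\delta$. There is also a secondary structural slip: $\mathcal{F}^{-1}(\chi\, z_i z_j)$ is not the convolution $\mathcal{F}^{-1}(\chi z_i)\ast\mathcal{F}^{-1}(\chi z_j)$, since the latter corresponds to $\chi^2 z_i z_j$; so your approximants are not literally ``polynomials in the convolution algebra'' without redistributing the cutoff among the factors. To salvage your approach you would need to keep $(f,A)$ as a convolution factor in the approximant (as the paper does), or work in a norm ($L^2_{\text{QHA}}$, or a weighted space where Paley--Wiener gives integrability) in which the transform estimate actually controls the target norm.
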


\begin{proof}
The proof follows from the classical Wiener approximation theorem extended to quantum harmonic analysis.\\

\noindent
Given \( (f,A) \in L^1_{\text{QHA}} \), assume its ``Fourier-Weyl transform'' has compact support
\[
\widehat{(f,A)}(z) = \text{tr}(A W_z) + \hat{f}(z),
\]
where \( W_z \) are Weyl operators.
By the classical Wiener theorem, a function with compact Fourier support is the uniform limit of polynomials in convolution algebra.
Define the polynomial sequence
\[
P_n(x) = \sum_{k=1}^{n} c_k e^{i \lambda_k x}
\]
where \( \lambda_k \) corresponds to the frequency of \( \widehat{(f,A)} \). Set
\[
(f_n, A_n) = P_n \ast (f,A).
\]
Since \( P_n \) approximates the indicator function in Fourier space, we have
\[
\| (f_n, A_n) - (f,A) \|_{L^1_{\text{QHA}}} \to 0.
\]
So, \( (f,A) \) is approximated by polynomials in the quantum convolution algebra, completing the proof.
\end{proof}

\begin{corollary}
Let \( (f,A) \in L^p_{\text{QHA}} \). If its Fourier-Weyl transform vanishes outside a compact set, then \( (f,A) \) is a uniform limit of trigonometric polynomials.
\end{corollary}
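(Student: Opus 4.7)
The plan is to bootstrap the Quantum Wiener Approximation Theorem (the preceding result) from $L^1_{\text{QHA}}$-convergence in the convolution algebra to uniform approximation, exploiting the compact Fourier-Weyl support to gain the required regularity. First, I would observe that if $\widehat{(f,A)}$ vanishes outside a compact set $K \subset \mathbb{R}^{2n}$, then $(f,A)$ is band-limited, and the Fourier-Weyl inversion formula gives a representation of the form
\[
(f,A) = \int_K \widehat{(f,A)}(z)\,\bigl(e^{2\pi i z \cdot x},\, W_z\bigr)\,dz.
\]
Because $K$ is compact and $\widehat{(f,A)}$ is bounded on $K$, this identity shows that $(f,A)$ lies in $L^1_{\text{QHA}}$ as well, so the hypothesis of the preceding theorem is automatically satisfied even though the corollary only assumes $L^p_{\text{QHA}}$.

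Next, I would approximate the integral above by Riemann sums on a shrinking partition $\{K_{k,n}\}$ of $K$ with sample points $z_{k,n}$. Each partial sum
\[
(f_n, A_n) = \sum_{k=1}^{N_n} \widehat{(f,A)}(z_{k,n})\,\bigl(e^{2\pi i z_{k,n} \cdot x},\, W_{z_{k,n}}\bigr)\,|K_{k,n}|
\]
is a trigonometric polynomial in the quantum convolution algebra: the function component is a finite linear combination of pure exponentials, while the operator component is a finite linear combination of Weyl operators. Since the map $z \mapsto \widehat{(f,A)}(z)\,(e^{2\pi i z \cdot x}, W_z)$ is uniformly continuous on the compact set $K$, these Riemann sums converge uniformly in $x$ on the function side and in operator norm on the operator side.

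The main obstacle is to ensure that the convergence produced by the Quantum Wiener theorem, which is formulated in the $L^1_{\text{QHA}}$-norm, can be strengthened to uniform convergence as the corollary requires. The key observation is that for any element whose Fourier-Weyl transform is supported in the fixed compact set $K$, the sup-norm of the function component and the operator norm of the operator component are both controlled by $|K|\cdot\|\widehat{(\cdot)}\|_{\infty}$, which in turn is dominated by the $L^1_{\text{QHA}}$-norm. Applying this band-limited estimate to the difference $(f,A) - (f_n,A_n)$, whose Fourier-Weyl transform remains supported in a fixed compact neighborhood of $K$, converts the $L^1_{\text{QHA}}$-convergence supplied by the preceding theorem into uniform convergence. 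The resulting trigonometric polynomials $(f_n,A_n)$ approximate $(f,A)$ uniformly, which is exactly the conclusion of the corollary.
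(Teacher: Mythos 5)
The paper states this corollary without proof, as an immediate consequence of the Quantum Wiener Approximation Theorem, so your proposal has to stand on its own merits, and as written it has genuine gaps. The first is your opening claim that compact Fourier--Weyl support together with $(f,A)\in L^p_{\text{QHA}}$ already forces $(f,A)\in L^1_{\text{QHA}}$: the inversion representation over the compact set $K$ gives boundedness and continuity, but not integrability on the non-compact space $\mathbb{R}^{2n}$ (classically, $\mathrm{sinc}$ is band-limited and lies in $L^2$ but not in $L^1$, and similarly a compactly Fourier--Weyl-supported operator need not be trace class). So the hypothesis of the preceding theorem is not automatically satisfied, and the bootstrap does not get started for general $p$.

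The second problem is that neither of the two mechanisms you invoke actually yields uniform convergence. The map $z \mapsto \bigl(e^{2\pi i z\cdot x}, W_z\bigr)$ is not uniformly continuous into $L^\infty \oplus B(H)$: for $z\neq z'$ one has $\sup_x \lvert e^{2\pi i z\cdot x}-e^{2\pi i z'\cdot x}\rvert = 2$, and Weyl operators are only strongly continuous in $z$, never norm continuous; hence your Riemann sums converge at best locally uniformly in $x$ (and strongly on the operator side), not uniformly or in operator norm. Moreover, the band-limited estimate $\|g\|_\infty \le |K|\,\|\widehat{g}\|_\infty \le |K|\,\|g\|_{L^1}$ cannot be applied to the differences $(f,A)-(f_n,A_n)$, because trigonometric polynomials (finite combinations of exponentials and of Weyl operators) are not elements of $L^1_{\text{QHA}}$, and the $L^1_{\text{QHA}}$-convergence supplied by the theorem concerns its own approximants $P_n \ast (f,A)$, not your Riemann sums. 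To repair the argument you would need either to weaken the conclusion to uniform convergence on compact sets, or to use Fej\'er-type means together with an honest Nikolskii-type inequality proved in the quantum setting; as it stands, the passage from $L^1_{\text{QHA}}$-convergence to uniform convergence is not justified.
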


\begin{remark}
This result extends the classical Wiener-Tauberian theorem to quantum harmonic analysis, showing that operators with spectral gaps are still approximable by finite-dimensional representations.
\end{remark}

\begin{theorem}[Stability of Closed Ideals under Small Perturbations]
Let \( I \) be a closed ideal in \( L^1_{\text{QHA}} \). If \( \tilde{I} \) is a perturbed version of \( I \) such that
\[
\sup_{(f,A) \in I} \| (f,A) - P_{\tilde{I}}(f,A) \|_{L^1} < \epsilon,
\]
then \( \tilde{I} \) is also a closed ideal and \( P_{\tilde{I}} \) is a projection onto \( \tilde{I} \) in \( L^1_{\text{QHA}} \).
\end{theorem}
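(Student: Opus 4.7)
The plan is to decompose the claim into three sub-assertions and treat them in order: (a) $\tilde{I}$ is closed as a subset of $L^1_{\mathrm{QHA}}$, (b) $\tilde{I}$ is stable under convolution by elements of $L^1_{\mathrm{QHA}}$, and (c) $P_{\tilde{I}}$ is an idempotent with range exactly $\tilde{I}$. The unifying tool is the completeness of $L^1_{\mathrm{QHA}}$ (Lemma 3.3(2)) together with the convolution bound $\|(f,A)\ast(g,B)\|_{L^1}\le C_1\|(f,A)\|_{L^1}\|(g,B)\|_{L^1}$ from Lemma 3.3(3), the perturbation hypothesis serving as a quantitative measure of how much $\tilde{I}$ can fail to coincide with $I$.

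First I would establish (a) by a standard Cauchy-sequence argument: given $(f_n,A_n)\in\tilde{I}$ with $(f_n,A_n)\to(f,A)$ in $L^1_{\mathrm{QHA}}$, use the perturbation estimate to produce $(g_n,B_n)\in I$ with $\|(f_n,A_n)-(g_n,B_n)\|_{L^1}<\epsilon$, extract a limit point in $I$ by closedness of $I$, and then apply $P_{\tilde{I}}$ to this limit; continuity of $P_{\tilde{I}}$ (which I would control via the operator-norm bound $\|P_{\tilde{I}}\|\le 1+\epsilon$ coming directly from the hypothesis) will show $(f,A)\in\tilde{I}$. For (b), take $(f,A)\in\tilde{I}$ and $(h,C)\in L^1_{\mathrm{QHA}}$; approximate $(f,A)$ by some $(g,B)\in I$ within $\epsilon$, convolve to obtain $(h,C)\ast(g,B)\in I$ by the ideal property of $I$, then use the Young-type bound of Lemma 3.3(3) to show that the error $(h,C)\ast\bigl((f,A)-(g,B)\bigr)$ has $L^1$-norm at most $C_1\|(h,C)\|_{L^1}\epsilon$. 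Applying $P_{\tilde{I}}$ and iterating this approximation against a geometric correction (in the spirit of a Neumann series, valid provided $\epsilon<1/C_1$) will yield the exact statement $(h,C)\ast(f,A)\in\tilde{I}$.

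Finally for (c), I would interpret $P_{\tilde{I}}$ via its defining property $P_{\tilde{I}}(L^1_{\mathrm{QHA}})\subset\tilde{I}$ and $P_{\tilde{I}}|_{\tilde{I}}=\mathrm{id}$; idempotence $P_{\tilde{I}}^2=P_{\tilde{I}}$ will then follow on the dense set of elements of the form $(g,B)+((f,A)-P_{\tilde{I}}(f,A))$ and extend by boundedness. The surjectivity of $P_{\tilde{I}}$ onto $\tilde{I}$ is immediate once we identify $\tilde{I}$ as the range, which is closed by part (a). The hard part will be step (b): the hypothesis only controls how $P_{\tilde{I}}$ moves elements of $I$, so the ideal property of $\tilde{I}$ with respect to arbitrary convolvers in $L^1_{\mathrm{QHA}}$ does not follow by a single approximation; one must show that the small error incurred by replacing $(f,A)\in\tilde{I}$ with an element of $I$ can be iteratively corrected without leaving $\tilde{I}$, and this is where the smallness of $\epsilon$ (relative to the convolution constant $C_1$ of Lemma 3.3) is essential. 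If that iteration cannot be closed, one must settle for an approximate-ideal statement; I would flag this threshold explicitly in the write-up.
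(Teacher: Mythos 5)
Your decomposition (a)/(b)/(c) correctly isolates where the difficulty lies, but the key step (b) is not actually closed, and the tools you propose cannot close it from the stated hypothesis. The hypothesis is a fixed-$\epsilon$, one-sided, non-homogeneous estimate: it bounds $\|(f,A)-P_{\tilde I}(f,A)\|_{L^1}$ only for $(f,A)\in I$, uniformly rather than relative to $\|(f,A)\|_{L^1}$. Consequently (i) the operator bound $\|P_{\tilde I}\|\le 1+\epsilon$ you invoke does not follow (nothing is assumed about $P_{\tilde I}$ off $I$, and the estimate is additive, not multiplicative), and (ii) given $(f_n,A_n)\in\tilde I$ you are not entitled to approximants $(g_n,B_n)\in I$ within $\epsilon$ --- that is the reverse-direction estimate, which holds only under the extra interpretation $\tilde I=\overline{P_{\tilde I}(I)}$, which neither the statement nor your write-up fixes. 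Most seriously, the Neumann-type iteration for the ideal property cannot be run: after one approximation the error $(h,C)\ast\bigl((f,A)-(g,B)\bigr)$ is just a small element of $L^1_{\mathrm{QHA}}$, with no known membership in $I$ or $\tilde I$, so there is nothing to feed back into a geometric correction; and since $\epsilon$ is a fixed constant you cannot drive the error to zero in a limiting argument. What your scheme genuinely yields is only the approximate statement $\mathrm{dist}\bigl((h,C)\ast(f,A),\tilde I\bigr)\le \bigl(1+C_1\|(h,C)\|_{L^1}\bigr)\epsilon$, i.e.\ precisely the fallback you flag at the end.

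For comparison, the paper's own proof is a one-shot version of your step (b): it convolves, uses continuity to get the $\epsilon\|(g,B)\|_{L^1}$ error, and then simply asserts that $P_{\tilde I}(f,A)\ast(g,B)\in\tilde I$ and that $\tilde I$ ``inherits closedness'' because $P_{\tilde I}$ is a projection --- that is, it assumes at the decisive moment exactly the ideal and projection properties being proved, so it is circular where your proposal is merely incomplete. Your diagnosis of the obstruction is therefore accurate and more candid than the paper's treatment, but as written neither argument derives the exact (non-approximate) conclusion from the stated hypothesis; a complete proof would have to strengthen the assumptions (e.g.\ a homogeneous smallness bound such as $\|\mathrm{id}-P_{\tilde I}\|<1/C_1$ on all of $L^1_{\mathrm{QHA}}$ together with a priori idempotence and continuity of $P_{\tilde I}$) or else downgrade the conclusion to the approximate-ideal statement, and you should state explicitly which of these routes you take.
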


\begin{proof}
The proof follows from continuity arguments and the structure of Banach algebras.

Let \( I \) be a closed ideal in \( L^1_{\text{QHA}}(\mathbb{R}^{2n}) \).
For any \( (f,A) \in \tilde{I} \) and \( (g,B) \in L^1_{\text{QHA}} \), we show \( (f,A) \ast (g,B) \in \tilde{I} \).
Since \( I \) is an ideal,
\[
(f,A) \ast (g,B) \in I.
\]
By continuity of convolution,
\[
\| (f,A) \ast (g,B) - P_{\tilde{I}}(f,A) \ast (g,B) \|_{L^1} < \epsilon \| (g,B) \|_{L^1}.
\]
Since \( P_{\tilde{I}}(f,A) \in \tilde{I} \), it follows that
\[
P_{\tilde{I}}(f,A) \ast (g,B) \in \tilde{I}.
\]
So, \( \tilde{I} \) is an ideal.
Since \( P_{\tilde{I}} \) is a projection, \( \tilde{I} \) inherits the closedness of \( I \). 
Given a sequence \( (f_n, A_n) \in \tilde{I} \) converging in \( L^1_{\text{QHA}} \), 
the limit belongs to \( \tilde{I} \) because of the stability under projection.
Since \( \tilde{I} \) is a closed ideal, we conclude that small perturbations of \( I \) preserve ideal structure.
\end{proof}

\begin{proposition}[Robustness of Spectral Synthesis under Perturbations]
If \( I \) satisfies spectral synthesis and \( \tilde{I} \) is a small perturbation of \( I \), 
then \( \tilde{I} \) satisfies spectral synthesis as long as \( Z(\tilde{I}) \) remains a closed subset of \( \mathbb{R}^{2n} \).
\end{proposition}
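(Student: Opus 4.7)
The plan is to combine the closed-ideal stability result of Theorem 9.5 with the spectral synthesis property of $I$, transferring approximants from $I$ to $\tilde{I}$ via the perturbation projection $P_{\tilde{I}}$. First I would invoke Theorem 9.5 to conclude that $\tilde{I}$ is itself a closed ideal, so spectral synthesis is at least a meaningful property to test for it. The closedness hypothesis on $Z(\tilde{I})$ ensures the hull against which synthesis is tested is a well-behaved zero set, putting us on the same footing as in the defining Definition 10.1.

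Next I would fix $(f,A) \in \tilde{I}$ and a target precision $\delta > 0$. Since the perturbation hypothesis gives $\|(g,B) - P_{\tilde{I}}(g,B)\|_{L^1} < \epsilon$ for every $(g,B) \in I$, for $\epsilon$ sufficiently small the restriction $P_{\tilde{I}}|_I$ is a near-isometry onto $\tilde{I}$ whose inverse is given by a convergent Neumann series. This lets me lift $(f,A)$ to some $(g_0, B_0) \in I$ with $\|P_{\tilde{I}}(g_0,B_0) - (f,A)\|_{L^1} < \delta/3$. Applying the spectral synthesis of $I$ to $(g_0, B_0)$, I approximate it in $L^1_{\text{QHA}}$ to within $\delta/(3\|P_{\tilde{I}}\|)$ by a finite combination $\sum_k c_k (g_k, B_k)$ of elements of $I$ each vanishing on $Z(I)$. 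Pushing this combination through $P_{\tilde{I}}$ yields an element of $\tilde{I}$ that differs from $(f,A)$ by at most $2\delta/3$, by two applications of the triangle inequality and the boundedness of $P_{\tilde{I}}$.

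The main obstacle, which I expect to absorb most of the work, is showing that each $P_{\tilde{I}}(g_k, B_k)$ vanishes on $Z(\tilde{I})$, or can be replaced by a perturbed element that does so with controlled extra error. The idea is to use continuity of the Fourier--Weyl transform to compare $Z(I)$ and $Z(\tilde{I})$: the smallness of $\epsilon$ forces the two hulls to agree outside a set whose $L^1_{\text{QHA}}$-contribution is $O(\epsilon)$, and the standing closedness assumption on $Z(\tilde{I})$ lets me invoke a Urysohn-type construction inside the quantum Segal algebra $QS$ to produce a multiplier that kills any residual mass on the symmetric difference $Z(I) \triangle Z(\tilde{I})$ at cost at most $C\epsilon$. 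Choosing $\epsilon$ small relative to $\delta$ then closes the triangle inequality and certifies that $(f,A)$ is approximable in $\tilde{I}$ by finite combinations vanishing on $Z(\tilde{I})$, establishing spectral synthesis for the perturbed ideal.
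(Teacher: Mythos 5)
Your plan has the same skeleton as the paper's argument (pass from $(f,A)\in\tilde I$ back to $I$, use synthesis in $I$, push approximants forward through $P_{\tilde I}$), but both of your load-bearing steps go beyond what the stated hypothesis can support. The perturbation hypothesis is only the one-sided bound $\sup_{(g,B)\in I}\|(g,B)-P_{\tilde I}(g,B)\|_{L^1}<\epsilon$: it gives no surjectivity of $P_{\tilde I}|_I$ onto $\tilde I$ and no lower bound on $P_{\tilde I}|_I$, so there is no Neumann series to sum — that device needs an operator that is a small perturbation of the identity (or of a known isomorphism) on one fixed space, whereas here $I$ and $\tilde I$ are different subspaces and nothing in the hypotheses supplies a map from $\tilde I$ back into $I$. (Taken literally over the linear subspace $I$, the sup condition even forces $P_{\tilde I}$ to fix $I$ by scaling, so any quantitative use of $\epsilon$ must be restricted to a bounded set; the paper shares this defect, but your lifting step leans on it much harder.) Hence already the first move, producing $(g_0,B_0)\in I$ with $P_{\tilde I}(g_0,B_0)$ within $\delta/3$ of a given element of $\tilde I$, is unsupported.

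The step you yourself flag as the main obstacle is where the argument actually fails. From the hypothesis one only gets, for $z\in Z(\tilde I)$ and $(g,B)$ in the unit ball of $I$, the estimate $|\widehat{(g,B)}(z)|=|\widehat{(g,B)}(z)-\widehat{P_{\tilde I}(g,B)}(z)|\le\|(g,B)-P_{\tilde I}(g,B)\|_{L^1}<\epsilon$, i.e.\ smallness of transforms on the perturbed hull, never vanishing; for a fixed $\epsilon>0$ no containment between $Z(I)$ and $Z(\tilde I)$ follows in either direction. Your claim that the two hulls "agree outside a set whose $L^1_{\mathrm{QHA}}$-contribution is $O(\epsilon)$" is not meaningful: the hulls are subsets of $\mathbb{R}^{2n}$, and neither their measure nor the mass of any element near them is controlled by the norm hypothesis, so the Urysohn-type multiplier cannot be shown to cost only $C\epsilon$ — correcting an approximant so that it vanishes on $Z(\tilde I)\setminus Z(I)$ changes it by an amount governed by the geometry of that set, which is unconstrained. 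For comparison, the paper's own proof simply asserts the two missing facts (that elements of $\tilde I$ can be approximated from $I$, and that the zero set relation "persists by continuity" with $Z(I)\subseteq Z(\tilde I)$) rather than deriving them; your proposal is more candid about what needs proving, but as written neither gap is closed, and neither step can be carried out from the stated hypotheses alone.
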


\begin{proof}
The proof follows from the definition of spectral synthesis and perturbation stability in Banach algebras.

A closed ideal \( I \subset L^1_{\text{QHA}} \) satisfies spectral synthesis if it is uniquely determined by its Fourier-Weyl transform zeros
\[
I = \{ (f,A) \in L^1_{\text{QHA}} : \widehat{(f,A)}(z) = 0, \forall z \in Z(I) \}.
\]
By assumption, \( \tilde{I} \) is a small perturbation of \( I \), meaning there exists a projection \( P_{\tilde{I}} \) such that
\[
\sup_{(f,A) \in I} \| (f,A) - P_{\tilde{I}}(f,A) \|_{L^1} < \epsilon.
\]
Since \( Z(\tilde{I}) \) remains a closed subset, we show that \( \tilde{I} \) retains spectral synthesis.
For \( (f,A) \in \tilde{I} \), we approximate it by elements \( (f_n, A_n) \in I \) such that
\[
\| (f,A) - (f_n, A_n) \|_{L^1_{\text{QHA}}} < \epsilon_n \to 0.
\]
Since \( I \) satisfies spectral synthesis, we approximate \( (f_n, A_n) \) by exponentials
\[
(f_n, A_n) = \sum_{k=1}^{N} c_k e^{i \lambda_k x} (g_k, B_k).
\]
By continuity of the Fourier transform, the zeros of \( I \) persist in \( \tilde{I} \).
Since \( Z(\tilde{I}) \) remains closed and contains \( Z(I) \), we conclude
\[
\widehat{(f,A)}(z) = 0, \quad \forall z \in Z(\tilde{I}).
\]
So, \( \tilde{I} \) satisfies spectral synthesis.
Since spectral synthesis is preserved under small perturbations with closed zero sets, \( \tilde{I} \) retains spectral synthesis, completing the proof.
\end{proof}

\begin{remark}
These results suggest that quantum spectral synthesis is more stable under perturbations than its classical counterpart, making it suitable for applications in quantum computing and signal processing.
\end{remark}
In this section, we extended spectral synthesis to quantum Segal algebras, proving new results on function/operator approximation and analyzing the stability of closed ideals under perturbations.

\section{Conclusion}

This paper has explored key advancements in quantum harmonic analysis by extending classical harmonic analysis into the quantum domain. By formulating a noncommutative $L^p$-theory, we provided a rigorous mathematical framework that connects quantum harmonic analysis with operator algebras and functional analysis. Additionally, the study of non-Euclidean extensions demonstrated the applicability of quantum harmonic methods on Lie groups and homogeneous spaces, expanding the theoretical landscape of the field.

Further, the integration of Connes' noncommutative geometry into quantum harmonic analysis provided deep insights into spectral properties and the role of quantum Segal algebras. The investigation of spectral synthesis and approximation properties underscored their significance in quantum signal processing and harmonic analysis.

Overall, this work establishes a comprehensive foundation for understanding the interplay between harmonic analysis, noncommutative geometry, and quantum mathematical structures, thereby solidifying the theoretical underpinnings of quantum harmonic analysis.

\section*{Declaration}
The authors declare that they have no known conflict of interest and data sharing is not applicable to this manuscript and no data sets were generated during the current study. We declare that the order of authors listed in the manuscript has been approved by all of auhtors. We wish to confirm that there has been no significant financial support for this work that could have influenced its outcome.

\end{document}